\newtheorem{lem}{Lemma}[section]
\newtheorem{thm}[lem]{Theorem}
\newtheorem{prop}[lem]{Proposition}
\newtheorem{cor}[lem]{Corollary}
\theoremstyle{definition}
\newtheorem{defn}[lem]{Definition}
\newtheorem{example}[lem]{Example}
\newtheorem{rem}[lem]{Remark}
\newcommand{\CC}{{\mathbb C}}
\newcommand{\FF}{{\mathbb F}}
\newcommand{\NN}{{\mathbb N}}
\newcommand{\RR}{{\mathbb R}}
\newcommand{\ZZ}{{\mathbb Z}}
\newcommand{\Acal}{\mathcal{A}}
\newcommand{\Bcal}{\mathcal{B}}
\newcommand{\Ccal}{\mathcal{C}}
\newcommand{\Dcal}{\mathcal{D}}
\newcommand{\Fcal}{\mathcal{F}}
\newcommand{\Lcal}{\mathcal{L}}
\newcommand{\Scal}{\mathcal{S}}
\newcommand{\Ucal}{\mathcal{U}}
\def\mbbu{\mathbb{1}}                   
\def\benm{\begin{enumerate}}            
\def\ennm{\end{enumerate}}              
\newcommand{\norm}[1]{\left\Vert #1\right\Vert}         
\newcommand{\inner}[2]{\langle {#1,#2}\rangle}
\title{ Frame multiplication theory 
and  a vector-valued DFT and ambiguity function  }
\date{\today}
\author{Travis D. Andrews}
\author{John J. Benedetto}
\address{Norbert Wiener Center\\
         Department of Mathematics \\
         University of Maryland \\
         College Park, MD 20742 \\
         USA}
\email{jjb@math.umd.edu}
\urladdr{http://www.math.umd.edu/\textasciitilde jjb}
\author{Jeffrey J. Donatelli}
\subjclass[2010] {42, 
42C15, 43, 46, 46J, 65T50}
\keywords{Vector-valued DFT and ambiguity function, frame theory, harmonic
and group frame,
uncertainty principle, quaternions}
 \thanks{The first named author gratefully acknowledges the support
      of the Norbert Wiener Center. The second named author gratefully acknowledges the
      support of  DTRA Grant 1-13-1-0015 and ARO Grants W911NF-15-1-0112, 16-1-0008,
      and 17-1-0014.
      The third named author gratefully acknowledges the support of  the Norbert Wiener Center.}
\begin{document}

\newcounter{bean}

\begin{abstract}

Vector-valued discrete Fourier transforms ({\it DFTs}) and
ambiguity functions are defined. The motivation for the {\it definitions} is to
provide realistic modeling of multi-sensor environments in which a useful time-frequency 
analysis is
essential. 
The definition of the {\it DFT} requires associated {\it uncertainty principle inequalities}.
The definition of the ambiguity function requires a component that leads 
 to formulating a mathematical theory
in which two essential algebraic operations can be made compatible in a natural way. The theory is
referred to as {\it frame multiplication theory}.  These definitions, inequalities, and 
theory are interdependent, and they are the content of the paper with the centerpiece being
frame multiplication theory.

The technology underlying frame multiplication theory is the theory of frames, short time 
Fourier transforms ({\it STFTs}),
and the representation theory of finite groups. 
The main results have the following form:
frame multiplication exists if and only if the finite frames that arise 
in the theory are of a certain 
type, e.g., harmonic frames, or, more generally, group frames.


In light of the complexities and the importance of the modeling of time-varying 
and dynamical systems in the context of
effectively analyzing vector-valued multi-sensor environments, 
the theory
of vector-valued {\it DFTs} and ambiguity functions must not
only be {\it mathematically meaningful}, but it must have 
{\it constructive
implementable algorithms}, and be {\it computationally viable}. 
This paper presents our vision for resolving these issues, in terms of a 
significant mathematical theory, and based on
the goal of formulating and developing a useful vector-valued theory.

\end{abstract}


\maketitle

\section{Introduction}
\subsection{Background}

Our {\it background} for this work
was based in the following program.

\begin{itemize}
\item Originally, our {\it problem} was to construct
 libraries of phase-coded waveforms $v : \RR \longrightarrow \CC,$ parameterized by
design variables,
for use in communications and radar. A goal was to achieve diverse narrow-band ambiguity function 
behavior of $v$
by defining
new classes of
discrete quadratic phase and number theoretic
perfect autocorrelation sequences $u :\ZZ/N\ZZ \longrightarrow \CC$ with which to define $v$
and having optimal autocorrelation behavior in a way to be defined.
\item Then, a realistic more general problem was to construct vector-valued
waveforms $v$ in terms of
vector-valued sequences $u :\ZZ/N\ZZ \longrightarrow \CC^d$ having this optimal autocorrelation
behavior.
Such sequences are relevant in light of vector sensor 
capabilities and modeling, e.g., see \cite{Li-Sto2008}, \cite{StrFri2014}.
\end{itemize}

In fact, we shall define 
periodic vector-valued discrete Fourier transforms ({\it DFTs}) and narrow-band ambiguity functions.
Early-on we understood that the accompanying theory
could not just be 
a matter of using bold-faced letters to recount
existing theory, an image used by Joel Tropp for another multi-dimensional situation.
Two of us recorded our initial results on the subject
at an invited talk at Asilomar (2008), \cite{BenDon2008}, but we did not pursue it then, because there 
was the  fundamental one-dimensional {\it problem}, mentioned above in the first
bullet, that had to be resolved. Since then, we have
made appropriate progress on this one-dimensional problem,
see \cite{BenKonRan2009, BenDat2010, BenBenWoo2012}.


\subsection{Goals and short time Fourier transform (STFT) theme}
\label{sec:goals}
In 1953, P. M. Woodward \cite{wood1953a, wood1953b} defined the narrow-band radar ambiguity 
function. The narrow-band ambiguity function is a two-dimensional function of 
delay $t$ and Doppler frequency $\gamma$ that measures the correlation between a waveform $w$ 
and its Doppler distorted version. The information given by the 
narrow-band ambiguity function is important for 
practical purposes in radar. In fact, the {\it waveform design problem} is to
construct
waveforms having ``good" ambiguity function behavior in the sense of being designed to
solve real problems. 

Since we are only dealing with narrow-band ambiguity functions, we shall suppress the words ``narrow-band" 
for the remainder.

\begin{defn}[Ambiguity function]
\label{def:ambfn}
{\it a.} The {\it ambiguity function} $A(v)$ of $v \in L^2(\RR)$ is
\begin{align}
\label{eq:ambiguitydef}
A(v)(t,\gamma) &= \int_{\RR} v(s+t)\overline{v(s)}e^{-2\pi i s \gamma}\,ds \\
   & = e^{\pi i t \gamma} \int_{\RR} v\left(s + \frac{t}{2} \right) 
   \overline{v\left(s -
\frac{t}{2} \right)} e^{-2\pi i s \gamma} ds, \nonumber
\end{align}
for $(t,\gamma) \in \RR^2$.

{\it b.} We shall only be interested in the discrete version of \eqref{eq:ambiguitydef}. 
For an $N$-periodic function $u: \ZZ/N\ZZ \rightarrow \CC$ the {\it discrete periodic ambiguity function} is
\begin{equation}
\label{eq:peraf}
A_p(u)(m,n) = \frac{1}{N}\sum_{k=0}^{N-1}u(m+k)\overline{u(k)}e^{-2 \pi i kn/N},
\end{equation}
for $(m,n) \in \ZZ/N\ZZ \times \ZZ/N\ZZ$.

{\it c.} 
If $v, w \in L^2(\RR)$, the {\it cross-ambiguity function} $A(v,w)$ of $v$ and $w$ is
\begin{align}
\label{eq:crossaf}
A(v,w)(t,\gamma) &= \int_{\RR} v(s+t)\overline{w(s)}e^{-2\pi i s \gamma}\,ds\nonumber\\
&= e^{2\pi i t \gamma}\int_{\RR} v(s)\overline{w(s-t)}e^{-2\pi i s \gamma}\,ds.
\end{align}
Evidently, $A(v) = A(v,v)$, so that the ambiguity function is a special case of the cross-ambiguity function. 

{\it d.} The {\it short-time Fourier transform} (STFT) of $v$ with respect to a
{\it window function} $w \in L^2(\RR)\setminus\{0\}$ is
\begin{equation}
\label{eq:stft}
       V_w(v)(t,\gamma) = \int_{\RR} v(s)\overline{w(s-t)}e^{-2\pi i s \gamma}\,ds
\end{equation}
for $(t,\gamma) \in \RR^2,$ see \cite{groc2001} for a definitive mathematical treatment. Thus, we think
of the window $w$ as centered at $t$, and we have
\begin{equation}
\label{eq:afstft}
      A(v,w)(t,\gamma) =  e^{2 \pi i t \gamma}\,V_w(v)(t,\gamma).
\end{equation}

{\it e.} $A(v,w)$ and $V_{w}(v)$ can clearly be defined for functions $v, w $ on ${\mathbb R}^d$
and for other function spaces besides $L^2({\mathbb R}^d).$
The quantity $|V_{w}(v)|$ is the {\it spectrogram} of $v,$
that is so important in power spectrum analysis, see, e.g., \cite{wien1930}, \cite{BlaTuk1959}, 
\cite{papo1977}, \cite{prie1981}, \cite{chil1978}, \cite{mccl1982}, 
\cite{thom1982}.

\end{defn}

Our {\it goals} are the following.

\begin{itemize}
\item Ultimately, we shall establish the theory of vector-valued ambiguity functions 
of vector-valued functions $v$ on ${\mathbb R}^d$ in terms of their discrete periodic 
counterparts on $\ZZ/N\ZZ.$
\item To this end, in this paper, we define vector-valued {\it {\it DFT}s} and discrete periodic
vector-valued ambiguity functions on $\ZZ/N\ZZ$
in a natural way.
\end{itemize}

The STFT is the {\it guide} and the {\it theory of frames},
especially the theory of {\it DFT}, harmonic, and group frames, is the
framework (sic) to formulate these goals. The underlying technology that allows us
to obtain these goals is
frame multiplication theory.

\subsection{Outline}
We begin with an extended exposition on the theory of frames (Section \ref{sec:frames}).
The reason is that frames are
essential for our results, {\it and} our results are sometimes not conceived in terms of frames.
As such, it made sense to add sufficient background material.

The vector-valued discrete Fourier transform ({\it DFT}) is developed in Subsection \ref{sec:dftdefinv}.
The remaining two subsections of Section \ref{sec:vvDFT} conclude with a comparison of 
relations between Subsection \ref{sec:dftdefinv} and apparently different implications from the
Gelfand theory. Subsection \ref{sec:dftdefinv} is required in our vector-valued
ambiguity function theory.

Section \ref{sec:stftform} establishes the basic role of the STFT
in achieving the goals listed in Subsection \ref{sec:goals}. In the process, we
formulate our idea leading to the notion of {\it frame multiplication}, 
that is used to define the vector-valued
ambiguity function.
In Section
\ref{sec:stftform} we also give two diverse examples.
The first is for {\it DFT} frames
(Subsection \ref{sec:apdftframe}), 
that we present in an Abelian setting.
The second is for
cross-product frames (Subsection \ref{sec:crossprod}), 
that is fundamentally non-Abelian and non-group with regard to structure,
and that is motivated by the recent applicability of quaternions, e.g., \cite{KriNicGra2009}. 
Subsection \ref{sec:formala1def} 
relates the examples of Subsections \ref{sec:apdftframe} and 
\ref{sec:crossprod}, and formally 
motivates the theory of
frame multiplication presented in Section \ref{sec:fm}.

In Section \ref{sec:harmgroupframes} we define the 
{\it harmonic} and {\it group} frames that are the basis 
for our Abelian group frame multiplication results of Section \ref{sec:fmgroupframes}. 
Although we present the
results in the setting of finite Abelian groups and $d$-dimensional Hilbert spaces, many
of them can be generalized; and, in fact, some are more easily formulated and
proved in the general setting. As such, some of the theory in these sections is given in infinite
and/or non-Abelian terms. The major results are stated and proved in Subsection \ref{sec:abelfm}.
They characterize the existence of frame multiplication in term of harmonic and group
frames.

Section \ref{sec:uncertainty} is devoted to the uncertainty principle in the context of our
vector-valued {\it DFT} theory.

We close with Appendix \ref{sec:unirep}. Some of this material is used explicitly in Sections
\ref{sec:harmgroupframes} and \ref{sec:fmgroupframes},
and some provides a theoretical umbrella to cover the theory herein and the transition to the 
non-Abelian case
beginning with \cite{andr2017}.

\begin{rem}
The forthcoming non-Abelian theory is due to Travis Andrews \cite{andr2017}.
In fact, if $(G,\bullet)$ is a finite group 
with representation
$\rho:G\rightarrow GL(\CC^d)$, then we can show that there is a frame $\{x_n\}_{n\in G}$
and bilinear multiplication,
$\ast:\CC^d\times \CC^d\rightarrow \CC^d$, such that $x_m\ast x_n=x_{m\bullet n}$.

Further, we are extending the theory 
to tight frames for 
finite dimensional Hilbert spaces $H$ over $\CC$ and finite rings $G$,
so that there are meaningful
generalizations of the vector-valued $A_p^d(u)$ theory in the
formal but motivated settings of Equations \eqref{eq:ap1def} and \eqref{eq:apddef}.

 It remains to establish the theory in infinite dimensional
 Hilbert spaces and associated infinite locally compact groups and rings
 as well as tantalizing
 non-group cases, see, e.g., our cross product
example in Subsection \ref{sec:crossprod} and its relationship to quaternion groups.

\end{rem}

\section[]{Frames}
\label{sec:frames}

\subsection{Definitions and properties}
\label{defframe}
Frames are a generalization of orthonormal bases where we relax Parseval's identity to allow for overcompleteness. 
Frames were first introduced in 1952 by Duffin and Schaeffer \cite{DufSch1952} and have become 
the subject of intense study since the 1980s. e.g., see \cite{daub1992}, \cite{BenWal1994}, \cite{bene1994}, 
\cite{chri2016}, \cite{CasKut2013}.
(In fact, Paley and Wiener gave the technical 
definition of a frame in \cite{PalWie1934}, but 
they only developed the completeness properties.)

\begin{defn}[Frame]
\label{def:frame}
{\it a.} Let $H$ be a separable Hilbert space over the
field $\FF,$ where $\FF = \RR$ or $\FF = \CC.$ A finite or countably infinite sequence, 
$X = \{x_j\}_{j \in J},$ of elements of $H$ 
is a {\it frame} for $H$ if 
\begin{equation}
\label{eq:framedef}
      \exists A, B > 0  \; \text{such that}  \;
      \forall x \in H, \quad A\norm{x}^2 \leq \sum_{j \in J} |\langle {x},{x_j}\rangle|^2 \leq B\norm{x}^2.
\end{equation}
The optimal constants, viz., the supremum over all such $A$ and infimum over all such $B$, are 
called the {\it lower} and {\it upper frame bounds} respectively. When we refer to {\it frame bounds} 
$A$ and $B$, we shall mean these optimal constants.

{\it b.} A frame $X$ for $H$ is a {\it tight frame} if $A = B.$ If a tight frame has the further property 
that $A = B = 1,$ then the frame is a {\it Parseval frame} for $H.$  

{\it c.}  A frame $X$ for $H$ is {\it equal-norm} 
if each of the elements of $X$ has the same norm. Further, a frame $X$ for $H$ is a {\it unit norm tight frame} 
(UNTF) if each of the elements 
of $X$ has norm $1.$  If $H$ is finite dimensional and $X$ is an UNTF for $H,$ then $X$ is a 
{\it finite unit norm tight frame} ({\it FUNTF}).

{\it d.} A sequence of elements of $H$ satisfying an upper frame bound,
such as $B\norm{x}^2$ in (\ref{eq:framedef}), is a {\it Bessel sequence}.
\end{defn}

\begin{rem}
The series in (\ref{eq:framedef}) is an absolutely convergent series of positive numbers; and so, any reordering of
 the sequence of frame elements or reindexing by another set of the same cardinality will remain a frame. We allow for 
 repetitions of vectors in a frame so that, strictly speaking, the set of vectors, that we also call $X$, is a multi-set. We shall 
 index frames by an arbitrary sequence such as $J$ in the definition, or by specific sequences such as the set 
 ${\mathbb N}$ of positive 
integers 
or the set ${\mathbb Z}^d, d \geq 2,$ of multi-integers when it is natural to do so.
\end{rem}

Let $X = \{x_j\}_{j \in J}$ be a frame for $H$. We define 
the following operators associated with every frame; they 
are crucial to frame theory and will be used extensively. The {\it analysis operator} $L : H \rightarrow \ell^2(J)$ is defined by
\[
       \forall x \in H,  \quad Lx = \{\inner{x}{x_j} \}_{j \in J}.
\]
Inequality $\eqref{eq:framedef}$ ensures that the analysis operator $L$ is bounded.
If $H_1$ and $H_2$ are separable Hilbert spaces and if $T : H_1 \rightarrow H_2$ is a 
linear operator, then the {\it operator norm} $\norm{T}_{op}$ of $T$ is
\[
   \norm{T}_{op} = {\rm sup}_{\norm{x}_{H_1} \leq 1} \norm{T(x)}_{H_2}.
\]
Clearly, we have $\norm{L}_{op} \leq \sqrt{B}$.
The adjoint of the analysis operator is the {\it synthesis operator} 
$L^\ast : \ell^2(J) \rightarrow H$, and it is defined by
\[
        \forall a \in \ell^2(J),  \quad L^\ast a= \sum_{j \in J} a_j x_j.
\]
From Hilbert space theory, we know that any bounded linear operator $T: H \rightarrow H$ 
satisfies $\norm{T}_{op} = \norm{T^\ast}_{op}.$  Therefore, the synthesis operator $L^\ast$ is bounded 
and $\norm{L^\ast}_{op} \leq \sqrt{B}$.

The {\it frame operator} is the mapping $S : H \rightarrow H$ defined as $S = L^\ast L$, i.e., 
\[
\forall x \in H, \quad Sx = \sum_{j \in J} \inner{x}{x_j}  x_j.
\]
We shall  describe $S$ more fully. First, we have that
\[
\forall x \in H, \quad \inner{Sx}{x}= \sum_{j \in J} \left| \inner{x}{x_j}\right|^2.
\]
Thus, $S$ is a positive and self-adjoint operator, and $\eqref{eq:framedef}$ can be rewritten as
\begin{equation*}
        \forall x \in H, \quad A\norm{x}^2 \leq \inner{Sx}{x} \leq B\norm{x}^2,
\end{equation*}
or, more compactly, as
\begin{equation*}
AI \leq S \leq BI.
\end{equation*}
It follows that $S$ is invertible (\cite{daub1992}, \cite{bene1994}), $S$ is a multiple of the identity precisely when $X$ is a tight frame, and 
\begin{equation}\label{eq:frameopinverse}
B^{-1}I \leq S^{-1} \leq A^{-1} I.
\end{equation}
Hence, $S^{-1}$ is a positive self-adjoint operator and has a square root $S^{-1/2}$ (Theorem 12.33 in \cite{rudi1991}). This square root can be written as a power series in $S^{-1}$; consequently, it commutes with every operator 
that commutes with $S^{-1},$ and, in particular, with $S.$ Utilizing these facts we can prove a theorem that tells us
that frames share an important property with orthonormal bases, viz., there is a reconstruction formula.

\begin{thm}[Frame reconstruction formula]
Let $H$ be a separable Hilbert space, and let $X = \{x_j\}_{j \in J}$ be a frame for $H$ with frame operator $S$. Then 
\[
        \forall x \in H, \quad x = \sum_{j \in J} \inner{x}{x_j}S^{-1} x_j = \sum_{j \in J} \inner{x}{S^{-1}x_j} x_j 
        = \sum_{j \in J} \inner{x}{S^{-1/2}x_j}S^{-1/2}x_j,
\]
where the mapping $S:H \rightarrow H,$ $x \mapsto \sum_{j \in J} \inner{x}{x_j}  x_j,$
is a well-defined topological isomorphism.
\begin{proof}
The proof is three computations. From $I = S^{-1}S$, we have
\[
\forall x \in H, \quad x = S^{-1}Sx = S^{-1} \sum_{j \in J} \inner{x}{x_j}x_j = \sum_{j \in J} \inner{x}{x_j}S^{-1} x_j;
\]
from $I = SS^{-1}$, we have
\[
\forall x \in H, \quad x = SS^{-1}x = \sum_{j \in J} \inner{S^{-1}x}{x_j}x_j = \sum_{j \in J} \inner{x}{S^{-1}x_j}x_j;
\]
and from $I = S^{-1/2}SS^{-1/2}$, it follows that
\[
       \forall x \in H, \, x = S^{-1/2}SS^{-1/2}x = S^{-1/2} \sum_{j \in J} \inner{S^{-1/2}x}{x_j}x_j 
       = \sum_{j \in J} \inner{x}{S^{-1/2}x_j}S^{-1/2}x_j.
       \qedhere
\]
\end{proof}
\end{thm}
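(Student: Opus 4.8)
The plan is to read off all three reconstruction identities from the invertibility of $S$ together with three trivial factorizations of the identity operator, namely $I = S^{-1}S$, $I = SS^{-1}$, and $I = S^{-1/2}SS^{-1/2}$; the last of these is legitimate because, as recorded above, $S^{-1/2}$ exists and commutes with $S$. The only analytic content is that each of $S^{-1}$ and $S^{-1/2}$ is a bounded, indeed self-adjoint, operator, so that it may be pulled inside the frame-operator series, which converge in the norm topology of $H$.

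For the first identity I would begin from $x = S^{-1}Sx$, substitute the defining series $Sx = \sum_{j\in J}\inner{x}{x_j}x_j$, and then invoke continuity of $S^{-1}$ to interchange it with the sum, yielding $x = \sum_{j\in J}\inner{x}{x_j}S^{-1}x_j$. For the second identity I would instead write $x = SS^{-1}x = \sum_{j\in J}\inner{S^{-1}x}{x_j}x_j$ and move $S^{-1}$ across the inner product using its self-adjointness, so that $\inner{S^{-1}x}{x_j} = \inner{x}{S^{-1}x_j}$. For the third, symmetric identity I would apply $S^{-1/2}$ to both sides of the synthesis series for $S^{-1/2}x$, again pulling the operator inside the sum by continuity and then transferring one copy across the inner product by self-adjointness of $S^{-1/2}$; this produces the manifestly symmetric, Parseval-type expansion $x = \sum_{j\in J}\inner{x}{S^{-1/2}x_j}S^{-1/2}x_j$.

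It remains only to justify the final clause, that the map $S$ is a well-defined topological isomorphism. Well-definedness and boundedness of $S = L^\ast L$ follow from the upper frame bound, since $S$ is the composition of the bounded operators $L$ and $L^\ast$, while boundedness of $S^{-1}$ is exactly the estimate $B^{-1}I \le S^{-1}\le A^{-1}I$ already established; hence $S$ is a continuous bijection with continuous inverse. I expect no genuine obstacle here: the whole argument is bookkeeping once invertibility of $S$ and the existence, self-adjointness, and commutation properties of $S^{-1/2}$ are in hand. The one point that deserves any care, and the only place where something beyond algebra is used, is the interchange of the bounded operators $S^{-1}$ and $S^{-1/2}$ with the infinite series, which is permissible precisely because these operators are continuous and the series converge in norm.
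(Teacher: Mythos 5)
Your proposal is correct and follows essentially the same route as the paper's proof: the same three factorizations $I = S^{-1}S$, $I = SS^{-1}$, and $I = S^{-1/2}SS^{-1/2}$, with $S^{-1}$ and $S^{-1/2}$ pulled through the series and across inner products via boundedness and self-adjointness. Your explicit remarks on the continuity interchange and on why $S$ is a topological isomorphism merely spell out facts the paper records just before the theorem, so there is no substantive difference.
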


From the frame reconstruction formula and \eqref{eq:frameopinverse}, it follows that 
$\{S^{-1}x_j\}_{j \in J}$ is a frame with frame bounds $B^{-1}$ and $A^{-1}$ and 
$\{S^{-1/2}x_j\}_{j \in J}$ is a Parseval frame.
\begin{defn}[Canonical dual]
Let $X = \{x_j\}_{j \in J}$ be a frame for a separable Hilbert space $H$ with frame operator $S$. 
The frame $S^{-1}X = \{S^{-1}x_j\}_{j \in J}$ is the {\it canonical dual frame} of $X$. The frame $S^{-1/2}X = 
\{S^{-1/2}x_j\}_{j \in J}$ is the {\it canonical tight frame} of $X$.
\end{defn}

The {\it Gramian operator} is the mapping  $G : \ell^2(J) \rightarrow \ell^2(J)$ defined as 
$G = L L^\ast$. If $\{x_j\}_{j \in J}$ is the standard orthonormal basis for $\ell^2(J)$, then
\begin{equation}
\label{eq:gramian}
    \forall a = \{a_j\}_{j \in J} \in \ell^2(J), \quad \inner{Ga}{x_k} = \sum_{j \in J}a_j \inner{x_j}{x_k}.
\end{equation}

\subsection{FUNTFs}
\label{sec:funtfs}

We shall often deal with {\it FUNTFs}
$X=\{x_j\}_{j=1}^N $
for $\CC^d.$ 

The most interesting setting is for the case when $N > d.$ In fact, 
frames can provide redundant signal representation to compensate
for hardware errors, can ensure numerical stability, and are a natural model for
minimizing the effects of noise. Particular areas of a recent applicability of
{\it FUNTFs} include the following topics:
\begin{itemize}

\item  Robust transmission of data over erasure channels such as
the internet, e.g., see \cite{GoyKovVet1999}, \cite{GoyKovKel2001}, \cite{CasKov2003};

\item  Multiple antenna code design for wireless communications,
e.g., see \cite{HocMarRicSwe2000};

\item  Multiple description coding, e.g., see \cite{GoyKovVet1998}, \cite{HeaStr2003};

\item Quantum detection, e.g., see \cite{forn1991}, \cite{BolEld2003}, \cite{BenKeb2008}; 
\item  Grassmannian ``min-max" waveforms, e.g., see 
\cite{CalHarRai1999}, \cite{HeaStr2003}, \cite{BenKol2006}.

\end{itemize}
The following is a consequence of (\ref{eq:framedef}).

\begin{thm}

 If $X = \{x_j\}_{j=0}^{N-1}$ is a {\it FUNTF} for $\FF^d$, then
$$
   \forall x \in \FF^d, \quad x = \frac{d}{N}\sum_{j=0}^{N-1}\langle x,x_j\rangle x_j.
$$
\end{thm}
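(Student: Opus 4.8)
The plan is to reduce the claimed reconstruction formula to the already-established frame reconstruction formula by identifying the frame operator $S$ of a \emph{FUNTF} explicitly as a scalar multiple of the identity. The key observation is that for a tight frame the frame operator is $S = AI$ for a single constant $A$, and for a \emph{FUNTF} this constant is forced to equal $N/d$ by a trace computation. Once this is known, the first form of the frame reconstruction formula, $x = \sum_j \inner{x}{x_j} S^{-1} x_j$, collapses to $x = \tfrac{d}{N}\sum_j \inner{x}{x_j} x_j$.

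First I would invoke tightness: since $X$ is a \emph{FUNTF}, it is in particular a tight frame, so $A = B$ and the compact inequality $AI \le S \le BI$ forces $S = AI$, whence $S^{-1} = A^{-1} I$. It remains only to evaluate $A$. Here I would compute the trace of $S$ in two ways. On one hand, $S = AI$ on $\FF^d$ gives $\operatorname{tr}(S) = Ad$. On the other hand, using the defining formula $Sx = \sum_{j=0}^{N-1} \inner{x}{x_j} x_j$, the trace of $S$ equals $\sum_{j=0}^{N-1} \norm{x_j}^2$; this is the standard identity that for the rank-one operator $x \mapsto \inner{x}{x_j} x_j$ the trace is $\inner{x_j}{x_j} = \norm{x_j}^2$, summed over $j$. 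Because $X$ is unit-norm, each $\norm{x_j}^2 = 1$, so $\operatorname{tr}(S) = N$. Equating the two expressions yields $Ad = N$, i.e., $A = N/d$.

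With $A = N/d$ in hand, the conclusion is immediate: substituting $S^{-1} = (d/N) I$ into the frame reconstruction formula gives, for all $x \in \FF^d$,
\[
x = \sum_{j=0}^{N-1} \inner{x}{x_j} S^{-1} x_j = \frac{d}{N} \sum_{j=0}^{N-1} \inner{x}{x_j} x_j,
\]
which is exactly the asserted identity. I would note that everything used here is already available in the excerpt: the operator inequality $AI \le S \le BI$ and the invertibility of $S$, the frame reconstruction formula itself, and the definitions of tight, unit-norm, and \emph{FUNTF}.

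The only step requiring genuine care is the trace computation, which is the crux of pinning down the constant $A = N/d$. The main obstacle is to justify cleanly that $\operatorname{tr}(S) = \sum_j \norm{x_j}^2$; in finite dimensions this is routine, since $S$ is a finite sum of rank-one operators $x \mapsto \inner{x}{x_j}x_j$ each with trace $\norm{x_j}^2$, but one should be explicit that the computation is independent of the choice of orthonormal basis used to evaluate the trace. Since the statement is restricted to the finite-dimensional space $\FF^d$ with a finite frame, no convergence or infinite-dimensional subtleties arise, and the argument is short once the trace identity is recorded.
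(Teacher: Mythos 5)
Your proof is correct, and it is essentially the argument the paper intends: the paper gives no explicit proof, remarking only that the theorem is ``a consequence of (\ref{eq:framedef}),'' and the standard way to fill that in is exactly what you did --- tightness forces $S = AI$, the trace identity $\operatorname{tr}(S) = \sum_{j}\norm{x_j}^2 = N$ pins down $A = N/d$, and the reconstruction formula (or simply $x = S^{-1}Sx$) yields the factor $d/N$. All ingredients you invoke (the inequality $AI \le S \le BI$, invertibility of $S$, and the reconstruction formula) are established earlier in Section \ref{sec:frames}, so your argument is complete.
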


\begin{rem}
It is important to understand the geometry of {\it FUNTFs}.
e.g., at the most elementary level, the vertices of the Platonic solids centered at the origin
are {\it FUNTFs}. Further, {\it FUNTFs} can be characterized as the minima of a 
potential energy function, see \cite{BenFic2003} for the details of this result.

Orthonormal bases for $H = \FF^d$ are both Parseval frames and {\it FUNTFs}. 
If $X = \{x_j\}_{j=0}^{N-1}$ is Parseval for 
$H$ and each $\|x_j\| = 1,$ then $N=d$ and $X$ is an ONB for $H$. If $X$ 
is a {\it FUNTF} with
frame constant $A,$ then $A \neq 1$ if $X$ is not an ONB. Further, a {\it FUNTF} 
 $X$ is not
a Parseval frame  unless $N=d$ and $X$ is an ONB; and, similarly, 
a Parseval frame is not a {\it FUNTF} unless $N=d$ and $X$ is an ONB.

Let $X = \{x_j\}_{j=0}^{N-1}$ be a Parseval frame. Then, each $\|x_j\| \leq 1.$ If $X$ is also equiangular, 
then each $\|x_j\| < 1,$
whereas we can not conclude that any $\|x_j\|$ ever equals an $\|x_k\|$ unless $j=k$.
\end{rem}

When $H$ is finite dimensional, e.g., $H = \FF^d$, and $X = \{x_j\}_{j=0}^{N-1}$, 
then each of the above operators 
can be 
realized as multiplication on the left by a matrix. The synthesis operator, 
$L^\ast$, is the $d \times N$ matrix with the 
frame elements as its columns, i.e., 
\begin{equation*}
      L^\ast = \left[\begin{array}{c|c|c|c}
      x_0 & x_1 & \ldots & x_{N-1}
     \end{array}\right];
\end{equation*}
and the analysis operator, $L$,  is the $N \times d$ matrix with the conjugate transposes $x_j^\ast$
of the frame elements $x_j$ as its rows, i.e.,
\begin{equation*}
L = \left[\begin{array}{c}
x_0^\ast \\
x_1^\ast \\
\vdots \\
x_{N-1}^\ast
\end{array}\right].
\end{equation*}
The frame operator and Gramian are the products of these matrices. From direct multiplication of $LL^\ast$ or \eqref{eq:gramian} it is apparent that the Gramian, or Gram matrix, has entries
\begin{equation*}
G_{jk} = \inner{x_k}{x_j}.
\end{equation*}


\subsection{Naimark's theorem}
\label{naimark}
The following theorem, a weak variant of Naimark's dilation theorem, tells us every Parseval frame is the 
projection of an orthonormal basis in a larger space. The general form of Naimark's dilation theorem is a result 
for an uncountable family of increasing operators on a Hilbert space satisfying some additional conditions. 
It states that it is possible to construct an embedding into a larger space such that the dilations of the operators to 
this larger space commute and are a resolution of the identity. For an excellent description of this dilation problem 
and an independent geometric proof of a finite version of Naimark's dilation theorem we recommend an article by 
C. H. Davis, \cite{davi1977}. To see the connection of this general theorem with the one below, consider the 
finite sums of the (rank one)  projections onto the subspaces spanned by elements of a Parseval frame. 

\begin{thm}[Naimark's theorem, e.g., \cite{AkhGla1966}, \cite{HanLar2000}]
\label{thm:naimark}
A set $X = \{x_j\}_{j \in J}$ in a Hilbert space $H$ is a Parseval frame for $H$ if and only if there is a 
Hilbert space $K$ containing $H$ and an orthonormal basis $\{e_j\}_{j \in J}$ for $K$ such that the 
orthogonal projection $P$ of $K$ onto $H$ satisfies
\[
     \forall j \in J, \quad Pe_j = x_j.
\]

\begin{proof}
Let $X = \{x_j\}_{j \in J}$ be a Parseval frame for $H$, let $K = \ell^2(J)$, and let $L$ be the analysis 
operator of $X$. Since $X$ is a Parseval frame for $H$, we have
\[
     \norm{Lx}^{2}_K = \sum_{j \in J} \left| \inner{x}{x_j}\right|^2 = \norm{x}^{2}_H.
\]
Thus, $L$ is an isometry, and we can embed $H$ into $K$ by identifying $H$ with $L(H)$. Let $P$ be the 
orthogonal projection from $K$ onto $L(H)$. Denote the standard orthonormal basis for $K$ by 
$\{e_j\}_{j \in J}$. We claim that $Pe_n = Lx_n$ for each $n \in J$. 
To this end, we take any $m \in J$, and make the following computation:
\begin{align}\label{eq:Naimarkproof}
\inner{Lx_m}{Pe_n}_K &= \inner{PLx_m}{e_n}_K = \inner{Lx_m}{e_n}_K  \notag\\
&= \inner{x_m}{x_n}_H = \inner{Lx_m}{L x_n}_K.
\end{align}
In \eqref{eq:Naimarkproof} we use the fact that $P$ is an orthogonal projection for the first equality, 
that $Lx_m$ is in the range of $P$ for the second, the definitions of $L$ and $\{e_j\}_{j \in J}$ for the 
third, and that $L$ is an isometry for the last. Rearranging \eqref{eq:Naimarkproof} yields 
\[
     \inner{Lx_m}{Pe_n - Lx_n}_K = 0.
\] 
Since the vectors $Lx_m$ span $L(H)$ it follows that $Pe_n - Lx_n \perp L(H)$, whereas 
$Pe_n - Lx_n \in L(H)$. 
Thus, $Pe_n - Lx_n = 0$ as claimed. 

For the converse, assume that $H \subseteq K$, $\{e_j\}_{j \in J}$ is an orthonormal basis for $K$, 
$P$ is the orthogonal projection of $K$ onto $H$, and $Pe_j = x_j$. We claim that $X = \{x_j\}_{j \in J}$ 
is a Parseval frame for $H$. For any $y \in K$, we have Parseval's identity 
\[
      \norm{y}^{2}_K = \sum_{j \in J}\left| \inner{y}{e_j}_K \right|^2.
\] 
For $x \in H$, we additionally have $Px = x$. Thus,
\[
      \forall x \in H, \quad \norm{x}^{2}_H = \sum_{j \in J}\left| \inner{x}{e_j}_K \right|^2 = 
      \sum_{j \in J}\left| \inner{Px}{e_j}_K \right|^2 = \sum_{j \in J}\left| \inner{x}{Pe_j}_H \right|^2,
\] 
i.e., $\{x_j\}_{j \in J} = \{Pe_j\}_{j \in J}$ is a Parseval frame for $H$.
\end{proof}
\end{thm}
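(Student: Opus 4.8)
The plan is to prove the two implications separately, using the analysis operator $L$ from Subsection \ref{defframe} as the bridge in the forward direction and the ordinary Parseval identity for an orthonormal basis in the converse. The whole theorem is really a statement about isometric embeddings and orthogonal projections, so the strategy is to make $H$ sit inside a coordinate space in the natural way and then chase inner products.

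For the forward implication, suppose $X = \{x_j\}_{j \in J}$ is a Parseval frame for $H$. My first step is to observe that the defining inequality \eqref{eq:framedef} with $A = B = 1$ says precisely that the analysis operator $L : H \rightarrow \ell^2(J)$ is an isometry, since $\norm{Lx}^2 = \sum_{j \in J} |\inner{x}{x_j}|^2 = \norm{x}^2$. I would then take $K = \ell^2(J)$ and embed $H$ into $K$ by identifying $H$ with its isometric image $L(H)$; let $P$ denote the orthogonal projection of $K$ onto $L(H)$, and let $\{e_j\}_{j \in J}$ be the standard orthonormal basis of $\ell^2(J)$. The crux is then to verify that $Pe_j = Lx_j$, i.e., that projecting the coordinate vectors onto the copy of $H$ recovers the images of the frame elements. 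I would establish this by computing, for each $m, n \in J$, the inner product $\inner{Lx_m}{Pe_n}$: self-adjointness of $P$ together with $Lx_m \in L(H)$ collapses this to $\inner{Lx_m}{e_n}$, which by the definitions of $L$ and of the standard basis equals $\inner{x_m}{x_n}$, and since $L$ is an isometry this in turn equals $\inner{Lx_m}{Lx_n}$. Hence $Pe_n - Lx_n$ is orthogonal to every $Lx_m$, so it is orthogonal to $L(H)$; but it also lies in $L(H)$, forcing $Pe_n - Lx_n = 0$, as required.

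For the converse, suppose we are given $H \subseteq K$, an orthonormal basis $\{e_j\}_{j \in J}$ of $K$, and the orthogonal projection $P$ of $K$ onto $H$ with $Pe_j = x_j$. Here the argument is a short computation: for $x \in H$, Parseval's identity in $K$ gives $\norm{x}^2 = \sum_{j \in J} |\inner{x}{e_j}|^2$, and since $Px = x$ and $P$ is self-adjoint, each coordinate satisfies $\inner{x}{e_j} = \inner{Px}{e_j} = \inner{x}{Pe_j} = \inner{x}{x_j}$. Substituting yields $\norm{x}^2 = \sum_{j \in J}|\inner{x}{x_j}|^2$, which is exactly the Parseval frame condition.

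I expect the only genuine obstacle to be the identification $Pe_j = Lx_j$ in the forward direction; everything else is bookkeeping about isometries and projections. The natural pitfall is to assume this identity rather than prove it, so the care goes into the orthogonality argument that pins down $Pe_n - Lx_n$ as simultaneously lying in and orthogonal to $L(H)$. Once that is in place, both implications follow from the self-adjointness of orthogonal projections and the isometry property of $L$.
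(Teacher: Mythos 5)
Your proposal is correct and follows essentially the same route as the paper's proof: the forward direction embeds $H$ into $K = \ell^2(J)$ via the isometric analysis operator $L$, establishes $Pe_n = Lx_n$ by showing $Pe_n - Lx_n$ both lies in and is orthogonal to $L(H)$, and the converse uses Parseval's identity in $K$ together with $Px = x$ and self-adjointness of $P$. There is nothing to add.
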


\begin{rem}
If $X$ is a Parseval frame, then $L^\ast L = S = I$, and so $G^2 = LL^\ast L L^\ast = LL^\ast = G$. 
Hence, $G$ is a projection, and since it is self-adjoint it is an orthogonal projection. Furthermore, 
$Gx_j = LL^\ast x_j = Lx_j$. Thus, the orthogonal projection $P$ onto $L(H)$ from Naimark's theorem 
is precisely $G$.
\end{rem}


\subsection{DFT frames}
\label{sec:dftframes}

 The {\it characters} of the Abelian group $\ZZ/N\ZZ$ 
are the functions $\{\gamma_n\}, n = 0, \ldots, N-1,$ defined by $m \mapsto e^{2 \pi i mn / N}$, so that the 
dual $(\ZZ/N\ZZ\widehat{)}$ is isomorphic to $\ZZ/N\ZZ$ under the identification $\gamma_n \mapsto n.$
Hence, the Fourier transform on $\ell^2(\ZZ/N\ZZ) \simeq \CC^N$ is a linear map that can be expressed as
\begin{equation}
\label{eq:dftdef}
        \forall n \in \ZZ/N\ZZ, \quad \widehat{x}(n) = \sum_{m = 0}^{N-1} x(m)e^{-2 \pi i m n /N}.
\end{equation}

It is elementary to see that the Fourier transform is defined by a linear
transformation whose matrix representation is
\begin{equation}
\label{eq:dftmatrix}
       D_N = (e^{-2\pi i m n /N})_{m,n = 0}^{N-1}.
\end{equation}
The Fourier transform on $\CC^N$ is called the {\it discrete Fourier transform} ({\it DFT}), and $D_N$ 
is the {\it {\it DFT} matrix}. The {\it DFT} has applications in digital signal processing 
and a plethora of numerical algorithms. Part of the reason why its use is so ubiquitous is that fast algorithms 
exist for its computation. The {\it Fast Fourier Transform} ({\it FFT}) allows the computation of the {\it DFT} to take 
place in $O(N\log N)$ operations. This is a significant improvement
 over the $O(N^2)$ operations it would take to 
compute the {\it DFT} directly by means of \eqref{eq:dftdef}. The fundamental paper on the {\it FFT }
is due to Cooley and Tukey \cite{CooTuk1965}, 
in which they describe what is now referred to as the Cooley-Tukey {\it FFT} algorithm. The algorithm employs a 
divide and conquer method going back to Gauss
to break the $N$ dimensional {\it DFT} into smaller {\it DFT}s that may then be further 
broken down, computed, and reassembled. For a more extensive description of the {\it DFT}, {\it FFT}, and their 
relationship to sampling, sparsity,  and the Fourier transform on $\ell^1(\ZZ),$ see, e.g., \cite{bene1997},
\cite{terr1999}, and
\cite{GilIndIweSch2014}.

\begin{defn}[{\it DFT} frame]
\label{defn:dftframe}
Let $N\geq d$, and let $s: \ZZ/d\ZZ \rightarrow \ZZ/N\ZZ$ be injective.
For each $m = 0, \ldots, N-1,$ set
\[
      x_m = \left(e^{2\pi ims(1)/N}, \ldots, e^{2\pi ims(d)/N} \right) \in \CC^d,
\]
and define the $N \times d$ matrix,
\[
      \left(e^{2 \pi ims(n)/N}\right)_{m,n}.
\]
Then $X = \{x_m\}_{m=0}^{N-1}$ denotes its $N$ rows, and it is
an equal-norm tight frame for $\CC^d$ called a {\it {\it DFT} frame}. 

The name comes from the 
fact that the elements of $X$ are projections of the rows of the conjugate of the ordinary {\it DFT} 
matrix \eqref{eq:dftmatrix}. That $X$ is an equal-norm tight frame follows from Naimark's theorem 
(Theorem \ref{thm:naimark}) and the fact that the {\it DFT} matrix has orthogonal columns. In fact,
$(1/\sqrt N)\,D_N$ is a unitary matrix.
\end{defn}

The rows of the $N \times d$ matrix in Definition \ref{defn:dftframe}, up to multiplication by ${1}/{\sqrt {d}},$
form a {\it FUNTF} for $\CC^d.$ For example, see Figure \ref{fig:dftfuntf}, where $d=5$ and $N=8$. The function,
$s$ of Definition \ref{defn:dftframe} determines the 5 columns
of $\ast$s, that, in turn, determine ${\mathbb C}^5$.
\begin{figure}[htbp]
\centering
\centerline{\resizebox{8cm}{6cm}
{\includegraphics{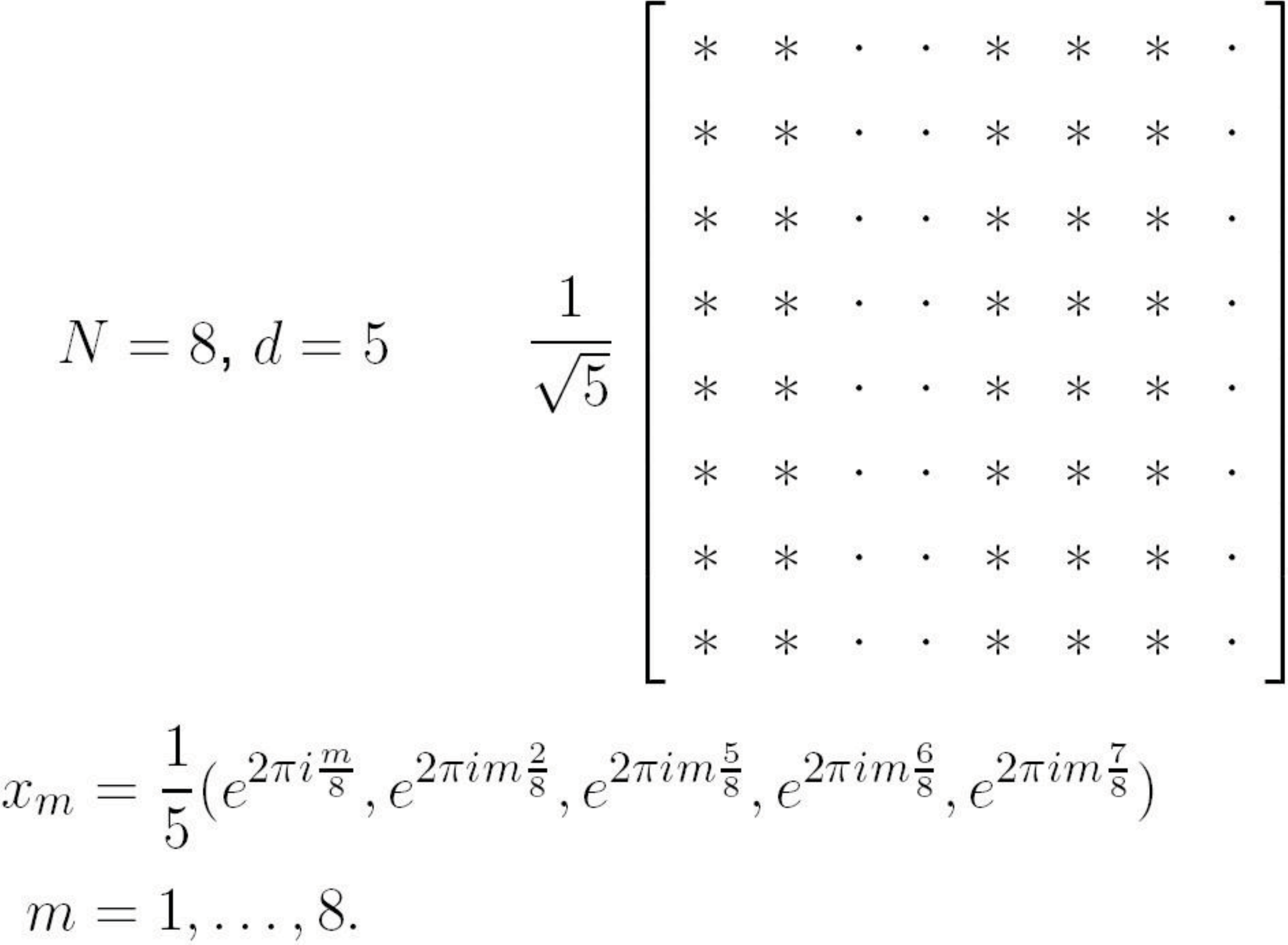}}}
\caption{A {\it {\it DFT} FUNTF}}
\label{fig:dftfuntf}
\end{figure}

Also, for a given $N,$
we shall use the notation, $\omega = e_{-1} = e^{-2\pi i/N},$ and so $e_m = e^{2\pi im/N}$.
Note that $\{e_m\}_{m=0}^{N-1}$ is a tight frame for $\CC.$


\section{The vector-valued discrete Fourier transform (DFT)}
\label{sec:vvDFT}

\subsection{Definition and inversion theorem}
\label{sec:dftdefinv}

In order to achieve the goals listed in Subsection \ref{sec:goals}, we shall also have to develop 
a vector-valued {\it DFT} theory to 
{\it verify}, not just {\it motivate},
that $A_{p}^{d}(u)$ is an STFT in the case $\{x_k\}_{k=0}^{N-1}$ is a {\it DFT}
frame for $\CC^d$.

We shall use the convention that the juxtaposition of vectors of equal dimension is the pointwise 
product of those vectors. Thus, for two functions, $u, v : \ZZ/N\ZZ \rightarrow \mathbb C^d,$ we let 
$uv$ be the coordinate-wise product of $u$ and $v.$ This means that
\[
     \forall m \in \ZZ/N\ZZ, \quad (uv)(m) = u(m)v(m) \in \mathbb C^d,
\]
where the product on the right is pointwise multiplication of vectors in $\ell^2(\ZZ/d\ZZ),$
and so $ u(m)(p), v(m)(p), (uv)(m)(p) \in \CC$ for each $p \in \ZZ/d\ZZ,$
i.e., $u(m)(p)$ designates the $p$th coordinate in $\mathbb C^d$ of the vector 
$u(m) \in \mathbb C^d.$

\begin{defn}[Vector-valued discrete Fourier transform]
Let $\{x_k\}_{k=0}^{N-1}$ be a {\it DFT} frame for $\mathbb C^d$
with injective mapping $s.$ Given $u:\mathbb \ZZ/N\ZZ \rightarrow \mathbb C^d,$
the {\em vector-valued discrete Fourier transform} (vector-valued {\it DFT}) $\widehat{u}$ of $u$ 
is defined by the formula,
\begin{equation}
\label{eq:vvDFTdef}
        \forall n \in \ZZ/N\ZZ, \quad F(u)(n) = \widehat{u}(n) =
        \sum_{m=0}^{N-1}u(m)  x_{-mn} \in \CC^d,
\end{equation}
where the product $u(m)x_{-mn}$ is pointwise (coordinate-wise) multiplication. Further, the mapping 
\[
      F: \ell^2(\ZZ/N\ZZ \times \ZZ/d\ZZ) \rightarrow \ell^2(\ZZ/N\ZZ \times \ZZ/d\ZZ)
\]
is a linear operator.
\end{defn}

\begin{rem}
{\it a.} Given $u : \ZZ/N\ZZ \rightarrow \CC^d.$ 
We write $u \in \ell^2(\ZZ/N\ZZ \times \ZZ/d\ZZ)$ as a function of two arguments so that 
$u(m)(p) \in \CC.$  With this notation we can think of $u$ and $\widehat{u}$ as $N\times d$ 
matrices with entries $u(m)(p)$ and $\widehat{u}(n)(q),$ respectively.

{\it b.} Thus, we have
\begin{align*}
        \forall q \in \ZZ/d\ZZ, \quad \widehat{u}(n)(q) &= \left( \sum_{m=0}^{N-1}u(m)x_{-mn}\right)(q)\\
        &= \left( \sum_{m=0}^{N-1}u(m)(q)x_{-mn}(q)\right).
\end{align*}
From this we see that $\widehat{u}(n)(q)$ depends only on $\{u(m)(q)\}_{m=0}^{N-1}$, i.e., 
when thought of as matrices the $q$-th column of $\widehat{u}$ depends only on the $q$-th column of $u$.
\end{rem}

\begin{thm}[Inversion theorem]
\label{thm:vvDFTinversion}
The vector-valued {\it DFT} is invertible if and only if $s$, the injective function defining the 
{\it DFT} frame, has the property that
\[\forall n \in \ZZ/d\ZZ, \quad (s(n),N) = 1.\]
In this case, the inverse is given by
\[
  \forall \ m \in \mathbb \ZZ/N\ZZ, \quad u(m) = (F^{-1}\widehat{u})(m) = \frac{1}{N}\sum_{p=0}^{N-1}\widehat{u}(p)x_{mp};
\]
and we also have that $F^\ast F = F F^\ast = NI$, where $I$ is the identity operator.

\begin{proof}
We first show the forward direction. Suppose there is $n_0 \in \ZZ/d\ZZ$ such that $(s(n_0), N) \neq 1$. 
Then there exists $j, l, M \in \NN$ such that $j >1$, $s(n_0) = jl$, and $N = jM$. Define a matrix $A$ as
\[
     A = (e^{2 \pi i m k s(n_0)/N})_{m,k = 0}^{N-1} = (e^{2 \pi i m k l/M})_{m,k = 0}^{N-1}.
\]
$A$ has rank strictly less than $N$ since the $0$-th and $M$-th rows are all $1$s. Therefore we can 
choose a vector $v \in \CC^N$ orthogonal to the rows of $A$. Define $u: \ZZ/N\ZZ \rightarrow \CC^d$ by
\[
u(m)(n) = \begin{cases}
v(m) &\text{ if } n = n_0 \\
0 &\text{ otherwise.}
\end{cases}
\]
Then,
\begin{align*}
     \forall n \neq n_0, \quad \widehat{u}(m)(n) 
      &= \sum_{k=0}^{N-1} u(k)(n) x_{-mk}(n) = \sum_{k=0}^{N-1} 0 \cdot x_{-mk}(n) = 0,
\end{align*}
while, for $n = n_0$, we have
\[
\widehat{u}(m)(n_0) = \sum_{k=0}^{N-1} u(k)(n_0) x_{-mk}(n_0) 
= \sum_{k=0}^{N-1} u(k)(n_0) e^{- 2 \pi i m k s(n_0)/N}
\]
\[
= \sum_{k=0}^{N-1} u(k)(n_0) e^{-2 \pi i m k l/M}
= \inner{u(\cdot)(n_0)}{e^{2 \pi i m (\cdot) l /M}}
= \inner{v}{e^{2 \pi i m (\cdot) l /M}}
= 0.
\]
The final equality follows from the fact that $v$ is orthogonal to the rows of $A$. Hence, the vector-valued {\it DFT} 
defined by $s$ has non-trivial kernel and is not invertible.

We prove the converse and the formula for the inverse with a direct calculation. We compute
\begin{align*}
\sum_{n=0}^{N-1} \widehat{u}(n)x_{mn} &= \sum_{n=0}^{N-1}\left(\sum_{k=0}^{N-1} u(k)x_{-kn} \right)  x_{mn}\\
&= \sum_{k=0}^{N-1} \left(u(k)\left( \sum_{n=0}^{N-1} x_{n(m-k)}\right) \right).
\end{align*}
The $r$-th component of the last summation is
\begin{align*}
\sum_{n=0}^{N-1} x_{n(m-k)}(r) &= \sum_{n=0}^{N-1} e^{2 \pi i n(m-k)s(r)/N}\\
&= \begin{cases}
N & \text{if } (m-k)s(r) \equiv 0 \text{ mod }N\\
0 & \text{if } (m-k)s(r) \not\equiv 0 \text{ mod } N.
\end{cases}
\end{align*}
Since $(s(r),N) = 1$, the first cases occurs if and only if $k=m$. Continuing with the previous calculation, we have
\[\sum_{k=0}^{N-1} \left(u(k)\left( \sum_{n=0}^{N-1} x_{n(m-k)}\right) \right) = N u(m).\]

Finally, we compute the adjoint of $F$.
\[
\inner{Fu}{v} = \sum_{m=0}^{N-1} \sum_{n=0}^{d-1} \widehat{u}(m)(n) \overline{v(m)(n)}
= \sum_{m=0}^{N-1} \sum_{n=0}^{d-1} \left(\sum_{k=0}^{N-1} u(k)(n)x_{-mk}(n) \right)\overline{v(m)(n)}
\]
\[
= \sum_{m=0}^{N-1} \sum_{n=0}^{d-1} \left(\sum_{k=0}^{N-1} u(k)(n)e^{-2\pi i mk s(n) / N} \right)\overline{v(m)(n)}
= \sum_{k=0}^{N-1} \sum_{n=0}^{d-1} \left(\sum_{m=0}^{N-1} \overline{v(m)(n)e^{2\pi i mk s(n) / N}} \right)u(k)(n)
\]
\[
= \sum_{k=0}^{N-1} \sum_{n=0}^{d-1} u(k)(n) \overline{\left(\sum_{m=0}^{N-1} v(m)(n)x_{mk}(n) \right)}
= \inner{u}{F^*v}.
\]
Therefore, $F^*$ is defined by
\[(F^* v )(k) = \sum_{m=0}^{N-1} v(m)x_{mk},\]
and $F^* = N F^{-1}$.
\end{proof}
\end{thm}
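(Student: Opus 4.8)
The plan is to exploit the coordinate decoupling already recorded in Remark~b: for each fixed $q \in \ZZ/d\ZZ$ one has $\widehat{u}(n)(q) = \sum_{m=0}^{N-1} u(m)(q)\, e^{-2\pi i m n s(q)/N}$, so the $q$-th column of $\widehat u$ depends only on the $q$-th column of $u$. Thus, after reindexing $\ell^2(\ZZ/N\ZZ \times \ZZ/d\ZZ)$ by columns, $F$ becomes block diagonal with $d$ blocks $T_q : \CC^N \to \CC^N$, where $T_q$ is the symmetric matrix with entries $(T_q)_{n,m} = e^{-2\pi i m n s(q)/N}$. Consequently $F$ is invertible if and only if every $T_q$ is invertible, and this reduces the entire statement to a one-dimensional question about these ``dilated'' {\it DFT} matrices.

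I would then settle invertibility of $T_q$ by a gcd/bijection argument, which I expect to be the crux of the proof. The $n$-th row of $T_q$ is determined by the residue $n\,s(q) \bmod N$, since $(T_q)_{n,m}$ depends on $n$ only through it. If $(s(q),N)=1$, then $n \mapsto n\,s(q) \bmod N$ is a bijection of $\ZZ/N\ZZ$, so the rows of $T_q$ are a permutation of the rows of the ordinary {\it DFT} matrix $D_N$ of \eqref{eq:dftmatrix}; since $(1/\sqrt N)D_N$ is unitary, $T_q$ is invertible. If instead $g=(s(q),N)>1$, then $n \mapsto n\,s(q) \bmod N$ is $g$-to-one, so $T_q$ has at most $N/g < N$ distinct rows and hence $\mathrm{rank}\,T_q \le N/g < N$; thus $T_q$ is singular, and one may also exhibit an explicit nonzero kernel vector supported on the $q$-th column. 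Combining the two cases with the decoupling yields the asserted equivalence: $F$ is invertible exactly when $(s(n),N)=1$ for all $n \in \ZZ/d\ZZ$.

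For the inverse formula I would assume the coprimality condition and verify $\frac{1}{N}\sum_{p=0}^{N-1}\widehat u(p)\,x_{mp} = u(m)$ coordinate by coordinate. Substituting the definition of $\widehat u$ and interchanging the two finite sums leaves the inner geometric sum $\sum_{p=0}^{N-1} e^{2\pi i p(m-k)s(q)/N}$, which equals $N$ when $(m-k)s(q)\equiv 0 \pmod N$ and $0$ otherwise; coprimality forces this to be $N\delta_{m,k}$, so only the $k=m$ term survives and returns $u(m)(q)$. Finally, for the adjoint I would compute $\inner{Fu}{v}$ on $\ell^2(\ZZ/N\ZZ \times \ZZ/d\ZZ)$, insert the definition of $\widehat u$, and reorder the summation to display the result as $\inner{u}{F^\ast v}$; this identifies $(F^\ast v)(k) = \sum_{m=0}^{N-1} v(m)\,x_{mk}$. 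Since the index product is symmetric, $x_{mk}=x_{km}$, and comparison with the inversion formula gives $F^\ast = N F^{-1}$, whence $F^\ast F = F F^\ast = NI$. Apart from the gcd dichotomy, every step is character orthogonality and bookkeeping.
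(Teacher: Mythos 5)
Your proposal is correct, but it does not follow the route the paper takes inside the proof of Theorem \ref{thm:vvDFTinversion}; it is instead essentially the viewpoint the paper develops afterwards, in Subsection \ref{sec:altvvdft}. You reduce $F$ to a block-diagonal operator with blocks $T_q = \Dcal_{s(q)}$ acting on the columns of $u$, and settle invertibility of each block structurally: when $(s(q),N)=1$ the rows of $T_q$ are a permutation of the rows of $D_N$ from \eqref{eq:dftmatrix}, hence $T_q$ is invertible; when $g=(s(q),N)>1$ the map $n \mapsto n\,s(q) \bmod N$ is $g$-to-one, so $T_q$ has at most $N/g < N$ distinct rows and is singular. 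The paper's own proof of the theorem argues differently: for necessity it fixes a bad coordinate $n_0$, notes only that the matrix $A$ has two equal rows (the $0$-th and $M$-th), and then explicitly constructs a nonzero kernel element of $F$ by taking $v$ orthogonal to the rows of $A$ and placing it in column $n_0$; for sufficiency it never argues invertibility abstractly, but proves it simultaneously with the inversion formula via the character-sum computation (which you also perform, but only after invertibility is already known). The paper's Subsection \ref{sec:altvvdft} then rederives the criterion your way, except that where you count distinct rows, it identifies the rows of $\Dcal_\ell$ with the orbit of a character and invokes the theory of generators of cyclic groups. What your route buys is transparency: the coprimality condition is visibly a statement about multiplication-by-$s(q)$ being a bijection of $\ZZ/N\ZZ$, and your $g$-to-one counting for the singular case is sharper than the paper's observation of a single repeated row. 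What the paper's route buys is self-containedness and economy: the explicit kernel vector certifies non-invertibility constructively, and one computation yields both sufficiency and the formula $u(m) = \frac{1}{N}\sum_{p=0}^{N-1}\widehat{u}(p)x_{mp}$ at once. Your adjoint computation and the identification $F^\ast = NF^{-1}$ coincide with the paper's.
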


By Theorem \ref{thm:vvDFTinversion}, we can define the {\it unitary vector-valued discrete Fourier transform} 
$\Fcal$ by the formula
\[
         \Fcal = \frac{1}{\sqrt{N}} F.
\]
With this definition, we have
\[
        \Fcal \Fcal^\ast = \Fcal^\ast \Fcal = I,
\]
and $\Fcal$ is unitary
.
\begin{defn}[Translation and modulation]
\label{defn:transmod}
Let $u: \ZZ/N\ZZ \rightarrow \mathbb C^d$, and let $\{x_k\}_{k=0}^{N-1}$ be a {\it DFT} frame for 
$\mathbb C^d$. For each $j \in \ZZ/N\ZZ$, define the {\it translation operators}, 
\[
      \tau_j : \ell^2(\ZZ/N\ZZ \times \ZZ/d\ZZ) \rightarrow \ell^2(\ZZ/N\ZZ \times \ZZ/d\ZZ), \quad
      \tau_{j}u(m) = u(m-j),
\]
and the {\it modulation operators},
\[
       e^j :  \ZZ/N\ZZ \rightarrow \mathbb C^d, \quad  e^{j}(k) = x_{jk}.
\]
\end{defn}

The usual translation and modulation properties of the Fourier transform hold
for the vector-valued transform.

\begin{thm}
\label{thm:vvdfttm}
Let $u: \ZZ/N\ZZ \rightarrow \mathbb C^d$, and let
$\{x_k\}_{k=0}^{N-1}$ be a {\it DFT} frame for $\mathbb C^d$ with associated vector-valued 
discrete Fourier transform $F.$
Then,
$$
     F(\tau_{j}u)= e^{-j}\widehat{u}
$$
and
$$
    F(e^{j}u) = \tau_{j}\widehat{u}.
$$
\end{thm}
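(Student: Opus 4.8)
The plan is to verify each identity by a direct computation at the level of individual coordinates, exploiting the crucial observation from Remark (b) following the definition of the vector-valued DFT: the $q$-th coordinate $\widehat{u}(n)(q)$ depends only on the $q$-th coordinate sequence $\{u(m)(q)\}_m$. Because the vector product $u(m)x_{-mn}$ is coordinate-wise and $x_{-mn}(q) = e^{-2\pi i m n s(q)/N}$, the transform decouples completely across the $d$ coordinates $q \in \ZZ/d\ZZ$, each coordinate behaving like a classical one-dimensional DFT on $\ZZ/N\ZZ$ but with frequency scaled by $s(q)$. So the problem reduces to $d$ independent scalar computations, and I would carry out each identity by fixing an arbitrary $q$ and an arbitrary output index, then manipulating the resulting exponential sum.

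For the first identity $F(\tau_j u) = e^{-j}\widehat{u}$, I would start from the definition and compute, for fixed $n$ and $q$,
\[
F(\tau_j u)(n)(q) = \sum_{m=0}^{N-1} (\tau_j u)(m)(q)\, x_{-mn}(q) = \sum_{m=0}^{N-1} u(m-j)(q)\, e^{-2\pi i m n s(q)/N}.
\]
The key step is the substitution $m' = m - j$ (valid since all indices are in $\ZZ/N\ZZ$ and the summand is $N$-periodic in $m$), which factors out $e^{-2\pi i j n s(q)/N} = x_{-jn}(q) = e^{-j}(n)(q)$ and leaves exactly $\widehat{u}(n)(q)$. Since $e^{-j}$ multiplies $\widehat{u}$ coordinate-wise (here I must remember that $e^{-j}$ is the modulation operator of Definition \ref{defn:transmod}, which as a function sends $n \mapsto x_{-jn} \in \CC^d$, so the product $e^{-j}\widehat{u}$ is evaluated pointwise in $n$ and then coordinate-wise in $q$), this gives the claim.

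For the second identity $F(e^j u) = \tau_j \widehat{u}$, I would similarly fix $n$ and $q$ and expand
\[
F(e^j u)(n)(q) = \sum_{m=0}^{N-1} e^j(m)(q)\, u(m)(q)\, x_{-mn}(q) = \sum_{m=0}^{N-1} u(m)(q)\, e^{2\pi i m j s(q)/N}\, e^{-2\pi i m n s(q)/N}.
\]
The essential step here is combining the two exponentials into $e^{-2\pi i m (n-j) s(q)/N} = x_{-m(n-j)}(q)$, which exhibits the sum as $\widehat{u}(n-j)(q) = (\tau_j \widehat{u})(n)(q)$; note that translation $\tau_j$ acts on the frequency variable $n$ exactly as it acts on any function on $\ZZ/N\ZZ$.

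The computations themselves are routine; the only genuine subtlety, and the step I would flag for care, is the bookkeeping of which argument each operator acts on and the coordinate-wise nature of the products. In particular one must check that the modulation operator $e^j$ as defined — a function $\ZZ/N\ZZ \to \CC^d$ with $e^j(k) = x_{jk}$ — interacts correctly: in $F(e^j u)$ it multiplies $u$ in the input (summation) variable $m$, whereas in $F(\tau_j u) = e^{-j}\widehat{u}$ it multiplies $\widehat{u}$ in the output variable $n$. Keeping these roles straight, and confirming that the index shift in the first identity respects $N$-periodicity so the reindexing is legitimate, is where essentially all the content lies; there is no analytic obstacle since everything is a finite sum.
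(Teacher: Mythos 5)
Your proof is correct and takes essentially the same route as the paper's: the same reindexing $m \mapsto m-j$ modulo $N$ and the same factoring step, except that you unpack the coordinatewise products into scalar exponential sums, where the frame relation $x_{j+k} = x_j x_k$ used in the paper becomes the exponential addition law for $e^{-2\pi i m n s(q)/N}$. The paper performs the identical computation at the vector level, invoking bilinearity and commutativity of the pointwise product instead of writing out coordinates.
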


\begin{proof}
{\it i.} We compute
\[
\widehat{\tau_j u}(n) = \sum_{m=0}^{N-1} \tau_j u(m)x_{-mn} 
= \sum_{m=0}^{N-1} u(m-j)  x_{-mn}
= \sum_{k=-j}^{N-1-j} u(k)  x_{-(k+j)n}
\]
\[
= \sum_{k=0}^{N-1} u(k)  x_{-kn - jn}
= x_{-jn}\left(\sum_{k=0}^{N-1} u(k)  x_{-kn}\right)
= x_{-jn} \widehat{u}(n).
\]
The third equality follows by setting $k = m - j$, the fourth by reordering the sum and noting 
that the index of summation is modulo $N$, and the fifth follows since $x_{j+k} = x_j  x_k$ and 
by the bilinearity of pointwise products.

{\it ii.} We compute
\[
\widehat{e^j  u}(n) = \sum_{m=0}^{N-1} (e^j  u)(m)  x_{-mn}
= \sum_{m=0}^{N-1} x_{jm} u(m)x_{-mn}
\]
\[
= \sum_{m=0}^{N-1} u(m)x_{-m(n-j)}\\
= \widehat{u}(n-j).
\]
The third equality follows from commutativity and since $x_{j+k} = x_j  x_k$.
\end{proof}


\subsection{A matrix formulation of the vector-valued DFT}
\label{sec:altvvdft}

We now describe a different way of viewing the vector-valued {\it DFT} that makes some properties more apparent. 
Given $N \in \NN$, define the matrices $\Dcal_\ell,$ 
\[
      \forall \ell \in \ZZ/N\ZZ, \quad \Dcal_\ell = (e^{-2 \pi i m n \ell / N})_{m,n=0}^{N-1}. 
\]
By definition of the vector-valued {\it DFT}, we have
\[
         \widehat{u}(n)(q) = \left( \sum_{m=0}^{N-1}u(m)(q)x_{-mn}(q)\right) 
\]
\[
           = \left( \sum_{m=0}^{N-1}u(m)(q)e^{-2 \pi i mn s(q)/N}\right) 
             = \left(\Dcal_{s(q)}u(\cdot)(q)\right) (n),
\]
i.e., the vector $\widehat{u}(\cdot)(q)$ is equal to the vector $\Dcal_{s(q)} u(\cdot)(q)$. In other words, 
we obtain $\widehat{u}$ by applying the matrix $\Dcal_{s(q)}$ to the $q$--th column of $u$ for each 
$0 \leq q \leq d-1$. Therefore, $F$ is invertible if and only if each matrix $\Dcal_{s(q)}$ is invertible. 

The rows of $\Dcal_{\ell}$ are a subset of the rows of the {\it DFT} matrix, and each row of the {\it DFT} matrix 
is a character of $\ZZ/N\ZZ$. Taken as a collection, the characters form the dual group 
$(\ZZ/N\ZZ\widehat{)} \simeq \ZZ/N\ZZ$ under pointwise multiplication. With this group operation and the fact that
\[\forall m, n \in \ZZ/N\ZZ, \quad e^{-2\pi i m n \ell /N} = (e^{-2\pi i n \ell/N})^m,\]
we see the rows of $\Dcal_{\ell}$ are the orbit of some element $\gamma \in (\ZZ/N\ZZ\widehat{)}$ 
repeated $|\gamma |/N$ times. Hence, $\Dcal_\ell$ is invertible if and only if $\gamma$ generates the 
entire dual group. From the theory of cyclic groups, $\gamma$ is a generator of $(\ZZ/N\ZZ\widehat{)}$ 
if and only if $\gamma = (e^{-2\pi i n \ell/N})_{n = 0}^{N-1}$ for some $\ell$ relatively prime to $N$. Therefore, 
$F$ is invertible if and only if $s(q)$ is relatively prime to $N$ for each $q$.

\begin{example}
Let $N = 4$ and recall that $\omega = e^{-2 \pi i /4}$. We compute the matrices $\Dcal_1,$
$\Dcal_2,$ and $\Dcal_3.$

\[\Dcal_1 = \left(
\begin{array}{cccc}
1 & 1 & 1 & 1\\
1 & \omega & \omega^2 & \omega^3\\
1 & \omega^2 & 1 & \omega^2\\
1 & \omega^3 & \omega^2 & \omega\\
\end{array}\right)
\quad
\Dcal_2 = \left(
\begin{array}{cccc}
1 & 1 & 1 & 1\\
1 & \omega^2 & 1 & \omega^2\\
1 & 1 & 1 & 1\\
1 & \omega^2 & 1 & \omega^2\\
\end{array}\right)
\quad
\Dcal_3 = \left(
\begin{array}{cccc}
1 & 1 & 1 & 1\\
1 & \omega^3 & \omega^2 & \omega\\
1 & \omega^2 & 1 & \omega^2\\
1 & \omega^1 & \omega^2 & \omega^3\\
\end{array}\right)\]
It is easy to see that $\Dcal_1$ and $\Dcal_3$ are invertible while $\Dcal_2$ is not invertible. 
In each case the matrix $\Dcal_i$ is generated by pointwise powers of its second row, which have orders 
$4$, $2$, and $4$ respectively. In fact, the full vector-valued {\it DFT} can be viewed as a block matrix, where the
$q$th block is $D_{s(q)}$.
\end{example}

\subsection{The Banach algebra of the vector-valued DFT}
\label{sec:vvBanachAlgebra}

We now study the vector-valued {\it DFT} in terms of Banach algebras.
In fact, we shall define a Banach algebra structure on $\Acal = L^1(\ZZ/N\ZZ \times \ZZ/d\ZZ)$, 
describe the spectrum $\sigma(\Acal)$ of $\Acal,$ and then prove that the Gelfand transform of $\Acal$ 
is the vector-valued {\it DFT}. 


To this end, first recall that if $G$ is a locally compact Abelian group (LCAG), then $L^{1}(G)$ is a 
commutative Banach algebra under convolution.

Next, let $\Bcal$ be a commutative Banach $\ast$-algebra over $\CC,$ where $\ast$ indicates the
{\it involution} satisfying the properties, $(x+y)^{\ast} = x^{\ast} + y^{\ast}, (cx)^{\ast} = \overline{ c}x^{\ast},
(xy)^{\ast} = y^{\ast}x^{\ast},$ and $x^{\ast\ast} = x$ for all
$x,y \in \Bcal$ and $c \in \CC.$ For example, let $ \Bcal = L^{1}(G)$ and define $f^{\ast}(t) = \overline{f(-t)}$
for $f \in L^{1}(G).$ The {\it spectrum} $\sigma(\Bcal)$ of $\Bcal$ is the set of non-zero homomorphisms,
$h : \Bcal \rightarrow \CC.$
$\sigma(\Bcal)$ is  subset of the weak $\ast$-compact unit ball of the dual space ${\Bcal}'$ of
the Banach space $\Bcal,$ and each $x \in \Bcal$ defines a function $\widehat{x} :  \sigma(\Bcal) \rightarrow \CC$
given by 
\[
      \forall h \in \sigma(\Bcal), \quad \widehat x(h) = h(x).
\]
$\widehat{x}$ is the {\it Gelfand transform} of $x.$ We shall use well-known properties of the Gelfand
transform, e.g., see \cite{pont1966}, \cite{GelRaiShi1964}, \cite{rudi1962},  \cite{HewRos1963}, 
\cite{HewRos1970}, \cite{reit1968}, \cite{foll1995}.

Using the group structure on $\ZZ/N\ZZ \times \ZZ/d\ZZ,$ 
we define the convolution of $u, v \in \Acal = L^1(\ZZ/N\ZZ \times \ZZ/d\ZZ )$ by the formula,
\begin{equation}
\label{eq:convolution1}
        (u \ast v)(m)(n) = \sum_{k = 0}^{N-1} \sum_{l = 0}^{d-1} u(k)(l)v(m-k)(n-l).
\end{equation}
This definition is not ideal for our purposes because it treats $u$ and $v$ as functions that take $Nd$ values. 
Our desire is to view $u$ and $v$ as functions that take $N$ values, that are each $d$ dimensional vectors. 
The convolution \eqref{eq:convolution1} can be rewritten as 
\begin{equation*}
         (u \ast v)(m)(n) = \sum_{k = 0}^{N-1} (u(k) \ast v(m-k))(n),
\end{equation*}
where the $\ast$ on the right hand side is $d$-dimensional convolution. Replacing this $d$-dimensional 
convolution with pointwise multiplication, we arrive at the following new definition of convolution 
on $\Acal.$
\begin{defn}[Vector-valued convolution]
Let $u, v \in \Acal.$ 
Define the {\it vector-valued convolution} of $u$ and $v$ by the formula
\[
            (u \ast_{\rm v} v)(m) = \sum_{k = 0}^{N-1} u(k)  v(m-k).
\]
\end{defn}

\begin{thm}
\label{thm:cba}
$\Acal$ equipped with the vector-valued convolution $\ast_{\rm v}$ is a 
commutative Banach $\ast$-algebra
with unit $e$ defined as
\[
      e(m) = \begin{cases}
     \vec{1} & m = 0\\
      \vec{0} & m \neq 0,
       \end{cases}
\]
where $\vec{1}$ and $\vec{0}$ are the vectors of $1$s and $0$s, respectively, and with involution 
defined as $u^\ast (m) = \overline{u(-m)}.$
\begin{proof}
It is essentially only necessary to verify that
$\norm{u \ast_{\rm v} v}_1 \leq \norm{u}_1 \norm{v}_1$ is valid.  We compute
\[
\norm{u\ast_{\rm v}  v}_1 = \sum_{m=0}^{N-1} \norm{u\ast v (m)}_{L^1(\ZZ/d\ZZ)} 
= \sum_{m=0}^{N-1} \norm{\sum_{k=0}^{N-1} u(k)  v(m-k) }_{L^1(\ZZ/d\ZZ)}
\]
\[
\leq \sum_{m=0}^{N-1} \sum_{k=0}^{N-1}  \norm{u(k)  v(m-k) }_{L^1(\ZZ/d\ZZ)}
\leq \sum_{m=0}^{N-1} \sum_{k=0}^{N-1}  \norm{u(k)}_{L^1(\ZZ/d\ZZ)} \norm{v(m-k) }_{L^1(\ZZ/d\ZZ)}
\]
\[
= \sum_{k=0}^{N-1} \norm{u(k)}_{L^1(\ZZ/d\ZZ)} \sum_{m=0}^{N-1} \norm{v(m-k) }_{L^1(\ZZ/d\ZZ)}\\
= \sum_{k=0}^{N-1} \norm{u(k)}_{L^1(\ZZ/d\ZZ)} \norm{v}_1\\
= \norm{u}_1 \norm{v}_1 .
\]
\qedhere
\end{proof}
\end{thm}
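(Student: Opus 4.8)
The plan is to exploit the fact that the vector-valued convolution $\ast_{\rm v}$ decouples completely across the $\ZZ/d\ZZ$ coordinate. Writing $u_p(m) = u(m)(p)$ for the $p$-th coordinate function, one reads off from the definition that $(u \ast_{\rm v} v)(m)(p) = \sum_{k} u(k)(p)\,v(m-k)(p) = (u_p \ast v_p)(m)$, where $\ast$ on the right is ordinary convolution on $\ZZ/N\ZZ$. Thus $\ast_{\rm v}$ is nothing more than coordinate-wise convolution, and the map $u \mapsto (u_0,\ldots,u_{d-1})$ identifies $(\Acal,\ast_{\rm v})$ with the $\ell^1$-direct sum of $d$ copies of the group algebra $L^1(\ZZ/N\ZZ)$. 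Since the excerpt already records that $L^1(G)$ is a commutative Banach algebra (with involution $f \mapsto \overline{f(-\cdot)}$) for any LCAG $G$, this identification lets us inherit most of the required axioms from the single factor $L^1(\ZZ/N\ZZ)$.

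Concretely, bilinearity, associativity, and commutativity of $\ast_{\rm v}$ all hold because they hold coordinate-wise for convolution on $\ZZ/N\ZZ$; commutativity in particular follows from the substitution $k \mapsto m-k$ together with commutativity of the pointwise product in $\CC^d$. The element $e$ has $p$-th coordinate equal to the Dirac mass $\delta_0$ at the identity of $\ZZ/N\ZZ$, so $e \ast_{\rm v} u = u$ is immediate from $(e \ast_{\rm v} u)(m) = e(0)\,u(m) = \vec{1}\,u(m)$. The involution $u^\ast(m) = \overline{u(-m)}$ acts coordinate-wise as the standard involution $f \mapsto \overline{f(-\cdot)}$, so the four involution axioms, including $(u \ast_{\rm v} v)^\ast = v^\ast \ast_{\rm v} u^\ast$, follow coordinate-wise, the last one using commutativity to reverse the order. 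Completeness is automatic, since $\Acal = L^1(\ZZ/N\ZZ \times \ZZ/d\ZZ)$ is finite-dimensional.

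The one genuinely quantitative point---and the only step I expect to require real work---is submultiplicativity of the norm, $\norm{u \ast_{\rm v} v}_1 \leq \norm{u}_1 \norm{v}_1$, exactly as the author isolates. I would first note $\norm{u}_1 = \sum_p \norm{u_p}_{L^1(\ZZ/N\ZZ)}$, so that the $\Acal$-norm is the $\ell^1$-aggregate of the coordinate norms. Then, combining the classical convolution estimate $\norm{u_p \ast v_p}_1 \leq \norm{u_p}_1 \norm{v_p}_1$ on each coordinate with the elementary inequality $\sum_p a_p b_p \leq \left(\sum_p a_p\right)\left(\sum_p b_p\right)$ for nonnegative reals, one obtains
\[
\norm{u \ast_{\rm v} v}_1 = \sum_p \norm{u_p \ast v_p}_1 \leq \sum_p \norm{u_p}_1 \norm{v_p}_1 \leq \Big(\sum_p \norm{u_p}_1\Big)\Big(\sum_p \norm{v_p}_1\Big) = \norm{u}_1 \norm{v}_1.
\]
The same elementary inequality is what underlies the author's coordinate-free route: there one expands $\norm{u \ast_{\rm v} v}_1 = \sum_m \norm{\sum_k u(k)\,v(m-k)}_{L^1(\ZZ/d\ZZ)}$, applies the triangle inequality, uses that pointwise multiplication in $\CC^d$ satisfies $\norm{u(k)\,v(m-k)}_{L^1(\ZZ/d\ZZ)} \leq \norm{u(k)}_{L^1(\ZZ/d\ZZ)}\norm{v(m-k)}_{L^1(\ZZ/d\ZZ)}$, and then factors the resulting double sum after re-indexing in $m-k$. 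Either way the crux is the submultiplicativity of the $\ell^1$-type norm under coordinate-wise multiplication, and the remaining axioms are routine bookkeeping.
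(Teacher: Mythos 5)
Your proposal is correct, and its primary argument takes a genuinely different route from the paper's. The paper goes directly at the one quantitative point, exactly as you anticipate in your closing paragraph: it expands $\norm{u \ast_{\rm v} v}_1 = \sum_{m} \norm{\sum_k u(k)v(m-k)}_{L^1(\ZZ/d\ZZ)}$, applies the triangle inequality, bounds each pointwise product by $\norm{u(k)v(m-k)}_{L^1(\ZZ/d\ZZ)} \leq \norm{u(k)}_{L^1(\ZZ/d\ZZ)}\norm{v(m-k)}_{L^1(\ZZ/d\ZZ)}$, and factors the double sum after re-indexing, declaring the remaining axioms essentially routine. Your argument is instead structural: you observe that $\ast_{\rm v}$ acts coordinate-wise, so the map $u \mapsto (u_0,\ldots,u_{d-1})$ identifies $(\Acal,\ast_{\rm v})$ isometrically with the $\ell^1$-direct sum of $d$ copies of the group algebra $L^1(\ZZ/N\ZZ)$; the algebraic axioms (bilinearity, associativity, commutativity, unit, involution) are then inherited from the single factor, and submultiplicativity follows from the classical Young inequality $\norm{u_p \ast v_p}_1 \leq \norm{u_p}_1\norm{v_p}_1$ on each coordinate together with $\sum_p a_p b_p \leq \bigl(\sum_p a_p\bigr)\bigl(\sum_p b_p\bigr)$ for nonnegative reals. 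Both proofs are complete, and the two uses of ``$\ell^1$ submultiplicativity'' are dual to one another (the paper uses it for the pointwise product on $\ZZ/d\ZZ$, you for the cross terms over coordinates). What your decomposition buys is conceptual clarity: it makes the verification of the routine axioms genuinely routine rather than merely asserted, and it renders Theorem \ref{thm:specgelfA} nearly transparent, since the Gelfand spectrum of an $\ell^1$-direct sum of commutative unital Banach algebras is the disjoint union of the factors' spectra --- $d$ copies of $(\ZZ/N\ZZ\widehat{)}$, i.e., exactly the $Nd$ functionals $\gamma_{j,k}$, each supported on a single coordinate, that the paper later computes by hand. What the paper's direct estimate buys is brevity and self-containedness: no isomorphism needs to be set up, and a single chain of inequalities is the entire proof.
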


Tying this together with our {\it DFT} theory, we have the following
desired theorem relating $\mathcal{A}$ to the vector-valued {\it DFT}.

\begin{thm}[Convolution theorem]
Let $u, v \in \Acal$. The vector-valued Fourier transform of the convolution of $u$ 
and $v$ is the vector product of their Fourier transforms, i.e.,
\[F(u \ast_{\rm v} v) = F(u)  F(v).\]
\begin{proof}
\[
F(u \ast_{\rm v} v)(n) = \sum_{m=0}^{N-1} (u \ast v )(m)  x_{-mn}
= \sum_{m=0}^{N-1} \left( \sum_{k = 0}^{N-1} u(k)  v(m-k)  \right)  x_{-mn}
\]
\[
= \sum_{k=0}^{N-1} u(k)  \left(\sum_{m = 0}^{N-1} v(m-k)  x_{-mn}\right)
= \sum_{k=0}^{N-1} u(k)  \left(\sum_{l = 0}^{N-1} v(l)  x_{-(k+l)n}\right) 
\]
\[
= \left(\sum_{k=0}^{N-1} u(k)  x_{-kn}\right)  \left(\sum_{l = 0}^{N-1} v(l)  x_{-ln}\right) \\
= F(u)(n)  F(v)(n).
\]
\qedhere
\end{proof}
\end{thm}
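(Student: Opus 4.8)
The plan is to prove this by a direct computation that mirrors the classical convolution theorem, the single algebraic fact doing all the work being the homomorphism property $x_{j+k} = x_j\,x_k$ of the \textit{DFT} frame vectors (under coordinate-wise multiplication), together with the bilinearity and commutativity of the pointwise product. This property was already exploited in the proof of Theorem \ref{thm:vvdfttm}, and it holds because each coordinate of $x_m$ is the exponential $e^{2\pi i m s(p)/N}$, so that adding indices multiplies the corresponding exponentials coordinate by coordinate.

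Concretely, I would start from the definition of the vector-valued \textit{DFT} applied to the convolution, substitute the definition of $\ast_{\rm v}$, and then interchange the (finite) order of summation to pull the $k$-sum outside:
\[
F(u \ast_{\rm v} v)(n) = \sum_{m=0}^{N-1}\left(\sum_{k=0}^{N-1} u(k)\,v(m-k)\right) x_{-mn}
= \sum_{k=0}^{N-1} u(k)\left(\sum_{m=0}^{N-1} v(m-k)\,x_{-mn}\right).
\]
Next I would perform the change of index $l = m-k$ in the inner sum; since everything is indexed modulo $N$, this reindexing leaves the sum over a full period unchanged and yields $x_{-(k+l)n}$ in place of $x_{-mn}$. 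Invoking $x_{-(k+l)n} = x_{-kn}\,x_{-ln}$ then lets the factor $x_{-kn}$ be extracted from the inner sum, so the double sum factors as a product of two independent single sums, which are exactly $\widehat{u}(n)$ and $\widehat{v}(n)$. This gives $F(u \ast_{\rm v} v)(n) = F(u)(n)\,F(v)(n)$, and since $n \in \ZZ/N\ZZ$ was arbitrary, the identity $F(u \ast_{\rm v} v) = F(u)\,F(v)$ follows.

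There is no serious obstacle here; the computation is routine once the multiplicative structure is in hand. The only points meriting care are (i) verifying that the reindexing $l = m-k$ is legitimate because the summands are $N$-periodic, so that the range of the inner sum is genuinely a full residue system and no boundary terms are lost, and (ii) keeping track of the fact that the juxtaposition $u(k)\,v(m-k)$ and $x_{-kn}\,x_{-ln}$ denotes coordinate-wise multiplication in $\CC^d$, whose commutativity and bilinearity are what allow both the extraction of $x_{-kn}$ and the final factorization. Because the multiplication is applied coordinate-wise, the argument is in fact $d$ copies of the scalar convolution theorem running in parallel, which is the conceptual reason the result is true.
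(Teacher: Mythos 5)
Your proposal is correct and follows essentially the same route as the paper's own proof: expand the definition of $\ast_{\rm v}$, interchange the finite sums, reindex with $l = m-k$ using $N$-periodicity, and invoke the multiplicative property $x_{-(k+l)n} = x_{-kn}\,x_{-ln}$ to factor the double sum into $F(u)(n)\,F(v)(n)$. The additional remarks you make about the legitimacy of the reindexing and the role of coordinate-wise commutativity and bilinearity are exactly the points the paper's computation uses implicitly.
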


We shall now describe the spectrum of $\mathcal{A}$ and the Gelfand transform of $\Acal$,
see Theorem \ref{thm:specgelfA}.

Define functions $\delta_{(i,j)}$ in $\Acal$ by
\[\delta_{(i,j)}(m)(n) = \begin{cases}
1 & (m,n) = (i,j) \\
0 & \text{otherwise}.\\
\end{cases}
\]
It is easy to see that $\delta_{(1,j)}^k = \delta_{(1,j)} \ast \ldots \ast \delta_{(1,j)} \text{ (k factors) } 
= \delta_{(k,j)}$ so that $\{\delta_{(1,j)}\}_{j = 0}^{d-1}$ generate $\mathcal{A}$. We shall find the 
spectrum of the individual elements of our generating set $\{\delta_{(1,j)}\}_{j = 0}^{d-1}$, and with 
this information describe the spectrum of $\Acal$.

To find the spectrum of $\delta_{(1,j)}$ we first find necessary conditions on $\lambda$ for 
$(\lambda e - \delta_{(1,j)})^{-1}$ to exist, and when these conditions are met we compute 
$(\lambda e - \delta_{(1,j)})^{-1}$ 
and thereby show the conditions are sufficient as well. To that end, suppose $u = (\lambda e - \delta_{(1,j)})^{-1}$ 
exists, i.e., $(\lambda e - \delta_{(1,j)}) \ast u = e$. Expanding the definitions on the left hand side
\begin{align*}
(\lambda e - \delta_{(1,j)}) \ast u (m) &= \sum_{k=0}^{N-1} (\lambda e - \delta_{(1,j)})(k)  u(m-k)\\
&= \lambda u(m) - \delta_{(1,j)}(1)  u(m-1).
\end{align*}
Setting the result equal to $e(m)$ and dividing into the cases $m = 0$ and $m \neq 0$ yields two equations
\begin{equation}\label{eq:inverse1}
\forall n \in \ZZ/d\ZZ, \quad \lambda u(0)(n) - \delta_{(1,j)}(1)(n) u(N-1)(n) = 1
\end{equation}
and
\begin{equation}\label{eq:inverse2}
\forall n \in \ZZ/d\ZZ \; {\rm and} \; \forall m \in {\ZZ/N\ZZ}\setminus\{0\}, \quad 
\lambda u(m)(n) - \delta_{(1,j)}(1)(n) u(m-1)(n) = 0.
\end{equation}
Substituting $n = j$ into \eqref{eq:inverse1} yields
\begin{equation}
\label{eq:inverse3}
       \lambda u(0)(j) - u(N-1)(j) = 1,
\end{equation}
while for $n \neq j$ we have
\[
       u(0)(n) = \frac{1}{\lambda}.
\]
Therefore, we must have $\lambda \neq 0$. Similarly, substituting $n = j$ in \eqref{eq:inverse2} gives
\begin{equation}
\label{eq:inverse4}
         \forall m \neq 0, \quad \lambda u(m)(j) - u(m-1)(j) = 0,
\end{equation}
while
\[
        \forall n \neq j, \; \forall m \neq 0, \quad u(m)(n) = 0.
\]
At this point and for our fixed $j$
we have specified all the values of $u$ except for $u(m)(j)$. 
Now, iterate \eqref{eq:inverse4} $N-1$ times to find
\begin{equation}\label{eq:inverse5}
\lambda^{N-1}u(N-1)(j)-u(0)(j) = 0.
\end{equation}
Finally, multiplying \eqref{eq:inverse5} by $\lambda$ and adding it to equation \eqref{eq:inverse3} 
we obtain
\begin{equation*}
(\lambda^N-1)u(N-1)(j) = 1,
\end{equation*}
and hence $\lambda^N \neq 1$. Using \eqref{eq:inverse4} we can find the remaining values of $u(m)(j)$:
\[
u(m)(j) = \frac{\lambda^{N-m-1}}{\lambda^N -1}.
\]
This completes the computation of $u$.  We have shown that, for $\lambda e - \delta_{(1,j)}$ to be invertible, 
$\lambda$ must satisfy $\lambda \neq 0$ and $\lambda^N \neq 1$. Given that $\lambda$ meets these 
requirements we found an explicit inverse; therefore $\sigma(\delta_{(1,j)}) = \{0, \lambda : \lambda^N = 1\}.$

By the Riesz representation theorem, a linear functional on $\mathcal{A}$ is given by integration against a 
function $\gamma \in L^\infty(\ZZ/N\ZZ \times \ZZ/d\ZZ),$ which we can also view 
simply as an $N \times d$ matrix. Further, a basic result in the Gelfand theory is that, for a commutative Banach 
algebra with unit, we have $\widehat{x}(\sigma(\mathcal{A})) = \sigma(x)$ (Theorem 1.13 of \cite{foll1995}). 
Combining 
this with our previous calculations, it follows that for a multiplicative linear functional $\gamma$, 
\[
\overline{\gamma(1)(n)} = \int \delta_{(1,n)} \overline{\gamma} = \gamma(\delta_{(1,n)}) \in \sigma(\delta_{(1,n)}).
\]
Since $\gamma$ is multiplicative,
\[
        \overline{\gamma (m)(n)} = \int \delta_{(m,n)} \overline{\gamma} = \int \delta_{(1,n)}^m \overline{\gamma} 
        = \gamma(\delta_{(1,n)}^m) = \gamma(\delta_{(1,n)})^m,
\]
taking the values  $0$ or $\lambda^m$ where $\lambda^N = 1.$
Therefore $\gamma (0)(n)$ is $0$ or $1$, and since 
\[
1 = \gamma(e) = \sum_{k=0}^{d-1} \overline{\gamma (0)(k)},
\] 
we have $\gamma(0)(n) \neq 0$ (and thus $\gamma(1)(n) \neq 0$) for only one $n$. It follows that for this $n$, 
$\gamma(1)(n) = \overline{\lambda}$ where $\lambda^N = 1$. 

We have everything we need to describe $\sigma(\mathcal{A})$. The multiplicative linear functionals on $\mathcal{A}$ 
are $N \times d$ matrices of the form
\[
\gamma_{\lambda,k}(m)(n) =
\begin{cases}
\lambda^{-m} & \text{for } n = k,\\
0 & \text{otherwise,}
\end{cases}
\quad \text{where } \lambda^N = 1, \quad 0 \leq k \leq d-1.
\]
Set $\omega = e^{-2\pi i /N}$. If $\lambda^N = 1$, then $\lambda = \omega^j$ for some $0 \leq j \leq N-1$, 
and we can write $\gamma_{\lambda,k}$ as $\gamma_{j,k}$.  Thus, we can list all the elements of 
$\sigma(\mathcal{A})$ as $\{\gamma_{j,k}\}$, $0 \leq j \leq N-1$, $0 \leq k \leq d-1$, and there are $Nd$ of them. 

Let $s: \ZZ/d\ZZ \rightarrow \ZZ/N\ZZ$ be injective and have the property that for every $n \in \ZZ/d\ZZ$, 
$(s(n),N) = 1$, i.e., the vector-valued {\it DFT} defined by $s$ is invertible. Using $s$, 
we can reorder $\sigma(\Acal)$ 
as follows.  For $0 \leq p \leq N-1$ and $0 \leq q \leq d-1,$ define $\gamma_{p,q}^\prime$ by  
\[\gamma_{p,q}^\prime(m)(n) =
\begin{cases}
\omega^{-p m s(q)} & \text{for } n = q,\\
0 & \text{otherwise}.
\end{cases}
\]
We claim $\{\gamma_{p,q}^\prime\}_{p,q}$ is a reordering of $\{\gamma_{j,k}\}_{j,k}$. To show this, first 
note that $\{\gamma_{p,q}^\prime\}_{p,q} \subseteq \{\gamma_{j,k}\}_{j,k}$. To demonstrate the 
reverse inclusion, 
for each $q \in \ZZ/d\ZZ$ find a multiplicative inverse to $s(q)$ in $\ZZ/N\ZZ$. This may be done 
because $(s(q),N) = 1$ for every $q$. Writing this inverse as $s(q)^{-1},$ it follows that
\[
       \gamma_{js(k)^{-1},k}^\prime = \gamma_{j,k},
\]
and therefore  $\{\gamma_{j,k}\}_{j,k} \subseteq \{\gamma_{p,q}^\prime\}_{p,q}$. 

We summarize all of these calculations as the following theorem.

\begin{thm}[Spectrum and Gelfand transform of $\mathcal{A}$]
\label{thm:specgelfA}
The spectrum, $\sigma(\Acal)$, of $\Acal$ is
identified with $\ZZ/N\ZZ\times\ZZ/d\ZZ$ by means of the mapping 
$\gamma_{p,q}^\prime \leftrightarrow (p,q).$ Under this identification, 
the Gelfand transform, $\widehat{x} \in C(\sigma(\Acal))$,
of $x \in \mathcal{A}$, is the $N \times d$ matrix,
\begin{align*}
        \widehat{x}(p)(q) = \widehat{x}(\gamma_{p,q}^\prime) = \gamma_{p,q}^\prime(x) 
&= \sum_{m=0}^{N-1} x(m)(q)\omega^{pms(q)}\\
&= \sum_{m = 0}^{N-1} x(m)(q)e^{-2\pi i pms(q)/N}.
\end{align*} 
In particular, under the identification, $\gamma_{p,q}^\prime \leftrightarrow (p,q)$, 
the Gelfand transform of $\Acal$ is the vector-valued {\it DFT}. 
\end{thm}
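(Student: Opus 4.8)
The plan is to use that the spectrum $\sigma(\Acal)$ of the commutative unital Banach $\ast$-algebra $\Acal$ (Theorem \ref{thm:cba}) consists exactly of its nonzero multiplicative linear functionals, and then to assemble the explicit description of those functionals obtained in the preceding discussion. Concretely, I would (i) confirm that the complete list of characters is $\{\gamma_{j,k}\}_{j,k}$; (ii) verify that the $s$-reindexing $\gamma_{p,q}^\prime \leftrightarrow (p,q)$ furnishes a genuine bijection of $\sigma(\Acal)$ with $\ZZ/N\ZZ \times \ZZ/d\ZZ$; and (iii) evaluate $\gamma_{p,q}^\prime$ on an arbitrary $x \in \Acal$ to read off the Gelfand transform and match it against the vector-valued {\it DFT} of Theorem \ref{thm:vvDFTinversion}.

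For (i), I would start from the fact that $\Acal$ is generated by the $\delta_{(1,j)}$, with $\delta_{(1,j)}^m = \delta_{(m,j)}$, so that any character $\gamma$ is determined by the scalars $\gamma(\delta_{(1,j)})$. Since $\widehat{x}(\sigma(\Acal)) = \sigma(x)$ for a commutative unital Banach algebra, each such scalar lies in $\sigma(\delta_{(1,j)}) = \{0\} \cup \{\lambda : \lambda^N = 1\}$, as computed above. The element $\delta_{(0,n)}$ is idempotent, so $\gamma(\delta_{(0,n)}) \in \{0,1\}$; and because $e = \sum_{n=0}^{d-1}\delta_{(0,n)}$, the unit relation $1 = \gamma(e) = \sum_n \gamma(\delta_{(0,n)})$ forces exactly one coordinate $k$ to be active. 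For that $k$ one has $\gamma(\delta_{(1,k)}) = \lambda$ with $\lambda^N = 1$, while $\gamma(\delta_{(1,n)}) = 0$ for $n \neq k$ (a nonzero value there would force the corresponding idempotent to equal $1$, contradicting the single-active-coordinate constraint). This recovers exactly the $Nd$ functionals $\gamma_{\lambda,k}$, i.e.\ $\{\gamma_{j,k}\}$.

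For (ii), the hypothesis $(s(q),N)=1$ makes $s(q)$ invertible modulo $N$, so $j \mapsto j\,s(q)^{-1}$ permutes $\ZZ/N\ZZ$; this is precisely what underlies the identity $\gamma_{j s(k)^{-1},k}^\prime = \gamma_{j,k}$ shown above, giving the bijection $\{\gamma_{p,q}^\prime\} = \{\gamma_{j,k}\}$ and hence $\sigma(\Acal) \leftrightarrow \ZZ/N\ZZ \times \ZZ/d\ZZ$. For (iii), I would unwind the Riesz pairing $\gamma(x) = \sum_{m,n} x(m)(n)\overline{\gamma(m)(n)}$: since $\gamma_{p,q}^\prime(m)(n) = \omega^{-pms(q)}$ when $n=q$ and vanishes otherwise, and since $\overline{\omega^{-pms(q)}} = \omega^{pms(q)}$, the only surviving terms are those with $n=q$, yielding
\[
  \widehat{x}(p)(q) = \gamma_{p,q}^\prime(x) = \sum_{m=0}^{N-1} x(m)(q)\,\omega^{pms(q)} = \sum_{m=0}^{N-1} x(m)(q)\,e^{-2\pi i pms(q)/N}.
\]
Comparing with $\widehat{u}(n)(q) = \sum_m u(m)(q)\,x_{-mn}(q)$ and $x_{-mn}(q) = \omega^{mns(q)}$ identifies this, under $p \leftrightarrow n$, with the vector-valued {\it DFT}.

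The bulk of the argument is assembly, so the only genuine care-points are bookkeeping ones: confirming that the single-active-coordinate deduction is forced (the interplay of the idempotent values with the unit relation), and tracking the complex conjugate in the Riesz pairing so that the final exponent comes out as $\omega^{pms(q)}$ rather than its conjugate. I expect the conjugation bookkeeping to be the easiest place to slip, and the identification of the full character set (ensuring no spurious functionals and none missing) to be the step most in need of a clean justification.
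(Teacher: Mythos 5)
Your proposal is correct and follows essentially the same route as the paper, which presents this theorem explicitly as a summary of the preceding calculations: the spectrum $\sigma(\delta_{(1,j)}) = \{0\} \cup \{\lambda : \lambda^N = 1\}$ of the generators, the forced single active coordinate from the unit relation, the $s$-reindexing bijection $\gamma_{js(k)^{-1},k}^\prime = \gamma_{j,k}$ using $(s(q),N)=1$, and the conjugate-tracking Riesz-pairing evaluation giving $\widehat{x}(p)(q) = \sum_{m=0}^{N-1} x(m)(q)\,\omega^{pms(q)}$. Your phrasing of the single-active-coordinate step through the idempotents $\delta_{(0,n)}$ and $e = \sum_{n=0}^{d-1}\delta_{(0,n)}$ is just a slightly cleaner restatement of the paper's deduction that each $\gamma(0)(n) \in \{0,1\}$ and these values sum to $\gamma(e) = 1$.
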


While this shows that
 the transform we have defined is itself not new, it also shows that a classical transform 
 can be redefined in the context of frame theory.  


\section{Formulation of generalized scalar- and vector-valued ambiguity functions}
\label{sec:stftform}

\subsection{Formulation}
\label{sec:form}

Given $u: \ZZ/N\ZZ \rightarrow \CC^d.$ A periodic vector-valued ambiguity function 
$A_p^d(u): \ZZ/N\ZZ \times \ZZ/N\ZZ \rightarrow \CC^d$ 
was defined in \cite{BenDon2008} by observing the following. 
If $d = 1$, then $A_p(u)$ in Equation (\ref{eq:peraf}) can be written as
\begin{align}
\label{eq:discreteambiguityrewrite}
      A_p(u)(m,n) &= \frac{1}{N}\sum_{k=0}^{N-1}\langle u(m+k), u(k)e_{kn}\rangle \nonumber\\
      &= \frac{1}{N}\sum_{k=0}^{N-1}\langle \tau_{-m} u(k), F^{-1}(\tau_n \widehat{u})(k)\rangle,
\end{align}
where $\tau_{-m}$ is the translation operator 
of Definition \ref{defn:transmod} and where $F^{-1}$ is the inverse {\it DFT}
on $\ZZ/N\ZZ$. In particular, we see that $A_p(u)$ has the form of a STFT,
see Example \ref{ex:stftapd}.
This is central to our approach.

If 
$d > 1$, then, motivated by the calculation $(\ref{eq:discreteambiguityrewrite}),$ it turns out that
we can define both a $\CC$-valued ambiguity function  $A_p^1(u)$ and
a $\CC^d$-valued function $ A_p^d(u).$

First, we consider the case of a $\CC$-valued ambiguity function. Inspired by 
\eqref{eq:discreteambiguityrewrite}, 
and for $u: \ZZ/N\ZZ \rightarrow \CC^d$, we wish to {\it construct} a sequence 
$\{x_n\}_{n = 0}^{N-1} \subseteq \CC^d$
and {\it define} a vector multiplication $\ast$  in $\CC^d$ so that the mapping,
$A_p^1(u): \ZZ/N\ZZ \times \ZZ/N\ZZ \rightarrow \CC,$ given by
\begin{equation}
\label{eq:ap1def}
       A_p^1(u)(m,n) = \frac{1}{N}\sum_{k=0}^{N-1}\langle u(m+k), u(k)\ast x_{kn} \rangle
\end{equation}
is  a meaningful ambiguity function. The product, $kn,$ is modular
multiplication in $ \ZZ/N\ZZ.$ In Subsections \ref{sec:apdftframe}
and \ref{sec:crossprod}, we shall see 
that in quite general circumstances, for the proper $\{x_n\}_{n = 0}^{N-1}$ and $\ast,$
Equation (\ref{eq:ap1def}) can be made compatible with that of $A_p(u)$ 
in \eqref{eq:discreteambiguityrewrite}.

Second, we consider the case of a $\CC^d$-valued ambiguity function.
In the context of our definition of  $A_p^1(u),$
we formulate the vector-valued version, $A_p^d(u),$ of the periodic
ambiguity function  as follows. Let
$u: \ZZ/N\ZZ \rightarrow \CC^d$, and define the mapping,
$A_p^d(u): \ZZ/N\ZZ \times \ZZ/N\ZZ \rightarrow \CC^d,$ by
\begin{equation}
\label{eq:apddef}
      A_p^d(u)(m,n) = \frac{1}{N} \sum_{k=0}^{N-1}u(m+k)\ast\overline{u(k)}\ast\overline{x_{kn}},
\end{equation}
where $\{x_n\}_{n =0}^{N-1}$ and $\ast$ must also be constructed and
defined, respectively. In Example
\ref{ex:stftapd}, we shall see that this definition is compatible with that of $A_p(u)$ 
in \eqref{eq:discreteambiguityrewrite}.

To this end of defining $A_p^d(u),$ and motivated by the facts that  $\{e_n\}_{n = 0}^{N-1}$
 is a tight frame for $\CC$ (as noted in Subsection \ref{sec:dftframes}) and $e_m e_n = e_{m+n}$, 
 the following {\it frame multiplication assumptions}
were made in \cite{BenDon2008}. 
\begin{itemize}
\item There is a sequence $X = \{x_n\}_{n = 0}^{N-1} \subseteq \CC^d$ and a multiplication 
$\ast: \CC^d \times \CC^d \rightarrow \CC^d$ such that 
\begin{equation}
\label{eq:ap1rule}
       \forall m, n \in \ZZ/N\ZZ, \quad x_m \ast x_n = x_{m+n};
\end{equation}
\item $X = \{x_n\}_{n = 0}^{N-1}$ is a tight frame for $\CC^d;$
\item  The multiplication $\ast$ is bilinear,
 in particular,
\[
         \left(\sum_{j=0}^{N-1}c_jx_j\right) \ast \left(\sum_{k=0}^{N-1}d_kx_k\right) = \sum_{j=0}^{N-1}
         \sum_{k=0}^{N-1}c_jd_kx_j\ast x_k.
\]

\end{itemize}
There exist tight frames satisfying these 
assumptions, e.g., {\it DFT} frames. We shall characterize 
such tight frames and multiplications in Sections \ref{sec:fm}, \ref{sec:harmgroupframes}, 
and \ref{sec:fmgroupframes}.

A reason we developed our vector-valued {\it DFT} theory of Section
\ref{sec:vvDFT} was to \textit{verify}, not just \textit{motivate},
that $A_p^{d}(u)$ is a STFT in the case $\{x_k\}_{k=0}^{N-1}$ is a {\it DFT}
frame for ${\mathbb C}^d$.
Let $X = \{x_n\}_{n = 0}^{N-1}$ be a {\it DFT} frame for $\CC^d$. We  can leverage 
the relationship between the bilinear product pointwise multiplication and the operation of 
addition on the indices of $X$, i.e., $x_m x_n = x_{m+n}$, to define the periodic vector-valued 
ambiguity function $A_p^{d}(u)$ as in Equation (\ref{eq:apddef}). In this case,
 the {\it DFT} frame is acting as a high dimensional analog to the 
roots of unity $\{\omega_n = e^{2\pi i n/N}\}_{n = 0}^{N-1}$, that appear in the definition of the usual
periodic ambiguity function. 

\begin{example}[Multiplication problem]
\label{ex:multprob}

 Given $u:{\mathbb Z}_N \longrightarrow {\mathbb C}^d$.
If $d=1$ and $x_n=e^{2\pi in /N}$, then Equations (\ref{eq:peraf}) and 
(\ref{eq:discreteambiguityrewrite})
can be written as 
\[
  A_{p}(u)(m,n) = \frac{1}{N}\sum_{k=0}^{N-1}\langle u(m+k),u(k)x_{nk}\rangle.
\]
The {\it multiplication problem} for $A_{p}^1(u)$ is 
to characterize sequences $\{x_k\} \subseteq \CC^d$ and
multiplications $\ast$ so that
$$
A_{p}^{1}(u)(m,n) = \frac{1}{N}\sum_{k=0}^{N-1}\langle u(m+k),u(k)\ast x_{nk}\rangle \in {\mathbb C}
$$
is a meaningful and well-defined {\em ambiguity function.} This
formula is clearly motivated by the STFT. It is for this reason that we made the frame multiplication 
assumptions.

In fact, suppose $\{x_{j}\}_{j=0}^{N-1} \subseteq \CC^d$
satisfies the three frame multiplication assumptions.
If we are given $u,v: \ZZ/N\ZZ \longrightarrow \CC^d$ and $m,n \in \ZZ/N\ZZ,$
then we can make the calculation,
\begin{eqnarray} 
\label{eq:uastv}
u(m)*v(n) &=&
\frac{d}{N}\sum_{j=0}^{N-1} \langle u(m),x_{j} \rangle x_{j}*\frac{d}{N}
\sum_{s=0}^{N-1} <v(n),x_{s}>x_{s} \\
&=& \frac{d^2}{N^2}\sum_{j=0}^{N-1}\sum_{s=0}^{N-1}
<u(m),x_{j}><v(n),x_{s}>x_{j}*x_{s} \nonumber \\
&=&\frac{d^2}{N^2}\sum_{j=0}^{N-1}\sum_{s=0}^{N-1}
\langle u(m),x_{j}\rangle \langle v(n),x_{s}\rangle x_{j+s}. \nonumber
\end{eqnarray}
This allows us to formulate $A_{p}^1(u),$ as written in Equation (\ref{eq:ap1def}),
see Subsection \ref{sec:apdftframe}.

\end{example}


\subsection{$A_{1}^{d}(u)$ and $A_{p}^{d}(u)$ for DFT frames}
\label{sec:apdftframe}

\begin{example}[STFT formulation of $A_p^{1}(u)$]
\label{ex:ap1dftframe}
Given $u,v : {\mathbb Z}/N{\mathbb Z} \rightarrow \mathbb C^d$, and let
$X = \{x_k\}_{k=0}^{N-1} \subseteq \CC^d$ be a {\it DFT} frame for $\mathbb C^d$.
Suppose $\ast $ denotes  pointwise (coordinatewise) multiplication
times a factor of $\sqrt{d}$. Then, the frame multiplication assumptions are satisfied.
To see this, and without loss of generality, choose the first $d$ columns of
the $N \times N$ {\it DFT} matrix, and let $r$ designate a fixed column.
Then, we can verify the first of the frame multiplication assumptions by
the following calculation, where the first step is a consequence of Equation \eqref{eq:uastv}:
\[
x_m\ast x_n(r) = \frac{d^2}{N^2}\sum_{j=0}^{N-1}\sum_{s=0}^{N-1}
\langle x_{m},x_{j}\rangle \langle x_{n},x_{s}\rangle x_{j+s}(r). 
\]
\[
= \frac{1}{N^2\sqrt{d}}\sum_{j=0}^{N-1}\sum_{s=0}^{N-1}\sum_{t=0}^{d-1}
\sum_{k=0}^{d-1}x_{(m-j)t}x_{(n-s)k}x_{(j+s)r} 
\]
\[
=  \frac{1}{N^2\sqrt{d}}\sum_{t=0}^{d-1}\sum_{k=0}^{d-1}x_{mt + nk}\sum_{j=0}^{N-1}
x_{(r-t)j}\sum_{s=0}^{N-1}x_{(r-k)s} 
\]
\[
= \frac{1}{N^2\sqrt{d}}\sum_{t=0}^{d-1}\sum_{k=0}^{d-1}x_{mt + nk}N\delta(r-t)N
\delta(r-k) 
\]
\[
=\frac{x_{(m +n)r}}{\sqrt{d}} = x_{m+n}(r). 
\]
The second and third frame multiplication assumptions follow since $X$ is  a
{\it DFT} frame and by a straightforward calculation (already used in
Equation \eqref{eq:uastv}), respectively.

Thus, in this case, $A_{p}^{1}(u)$ is well-defined for
$u:\mathbb \ZZ/N\ZZ \rightarrow \mathbb C^d$ by
Equation \eqref{eq:ap1exists} since its right side exists:
\begin{equation}
\label{eq:ap1exists}
A_{p}^{1}(u)(m,n) = \frac{1}{N}\sum_{k=0}^{N-1}\langle u(m+k),u(k) \ast x_{nk}\rangle 
\end{equation}
\[
 = \frac{1}{N}\sum_{k=0}^{N-1}\left<u(m+k),\frac{d}{N}\sum_{j=0}^{N-1}
<u(k),x_{j}>x_{j}*x_{nk}\right> 
\]
\[
 = \frac{d}{N^2}\sum_{k=0}^{N-1}\sum_{j=0}^{N-1}\langle x_{j},u(k)\rangle \langle u(m+k),x_{j+nk}\rangle.
\]
\end{example}

\begin{example}[STFT formulation of $A_p^{d}(u)$]
\label{ex:stftapd}
Given $u,v : {\mathbb Z}/N{\mathbb Z} \rightarrow \mathbb C^d$, and let
$X = \{x_k\}_{k=0}^{N-1} \subseteq \CC^d $ be a {\it DFT} frame for $\mathbb C^d$.
Suppose $\ast $ denotes  pointwise (coordinatewise) multiplication
with a factor of $\sqrt{d}$. Then, the frame multiplication assumptions are satisfied.
Utilizing the modulation functions, $e^j,$
defined in Definition \ref{defn:transmod},  we compute the right side of Equation (\ref{eq:apddef}) 
to obtain
\begin{equation}
\label{eq:apd2}
A_p^d(u)(m,n) = \frac{1}{N} \sum_{k=0}^{N-1}\tau_{-m}u(k)\overline{u(k)e^n(k)}.
\end{equation}
Furthermore, the modulation and translation properties of the vector-valued {\it DFT} 
allow us to write Equation (\ref{eq:apd2}) as
\[
     A_p^{d}(u)(m,n) =
    \frac{1}{N}\sum_{k=0}^{N-1}(\tau_{m}u(k)) \ast\overline{F^{-1}(\tau_{n}\hat{u})(k)};
\]
and, notationally, we write the right side as the generalized inner product,
\[
   \frac{1}{N}\sum_{k=0}^{N-1}\{\tau_{m}u(k),F^{-1}(\tau_{n}\hat{u})(k)\},
\]
where $\{u,v\} = u  \overline{v}$ is coordinatewise multiplication for $u, v \in \CC^d.$
Because of the form of Equation (\ref{eq:apddef}),  we reiterate that $A_p^{d}(u)$ is compatible 
with the 
point of view of defining
a  vector-valued ambiguity function in the context of the STFT.
\end{example}


\subsection{A generalization of the frame multiplication assumptions}
\label{sec:formala1def}
In the previous {\it DFT} examples, $\ast$ is intrinsically related to 
modular addition defined on the indices of the frame elements,
viz., $x_m \ast x_n = x_{m+n}$.
Suppose we are given $X$ and $\ast,$ that satisfy the frame multiplication
assumptions. It is not pre-ordained that the operation on the indices of the frame
$X$, induced by the bilinear vector 
multiplication, be addition mod $N$, as is the case for {\it DFT} frames. We are interested in finding 
tight frames whose behavior is similar
to that of {\it DFT} frames and whose index sets are Abelian groups, non-Abelian groups, or more 
general non-group sets and operations. 

Hence, and being formulaic, we could have ${x}_{m} \ast {{x}_n} = {x}_{m \bullet n}$ for some function 
$\bullet:{\mathbb Z}/N{\mathbb Z} \times  {\mathbb Z}/N{\mathbb Z} \rightarrow {\mathbb Z}/N{\mathbb Z},$ 
and, thereby, we could use non-{\it DFT} frames or even non-{\it FUNTFs} for ${\mathbb C}^d$.
Further, $\bullet$ could be defined on index sets, that are more general than ${\mathbb Z}/N{\mathbb Z}.$
Thus, a particular case 
could have the setting of bilinear mappings
of frames for Hilbert spaces that are indexed by groups.

For the purpose of Subsection \ref{sec:crossprod}, we continue to consider
the setting of ${\mathbb Z}/N{\mathbb Z}$ and ${\mathbb C}^d$, but replace
the first frame multiplication assumption, Equation \eqref{eq:ap1rule}, by the formula,
\begin{equation}
\label{eq:astbullet}
     \forall m, n \in \ZZ/N\ZZ, \quad x_m \ast x_n = x_{m \bullet n},
\end{equation}
where $X = \{x_k\}_{k=0}^{N-1}$ is still a tight frame for  $\CC^d $ and where 
$\ast$ continues to be bilinear.

The formula, Equation \eqref{eq:astbullet}, not only hints at generalization by the 
cross-product example of Subsection \ref{sec:crossprod}, but is the 
formal basis of the theory of frame multiplication in Sections
\ref{sec:fm}, \ref{sec:harmgroupframes}, 
and \ref{sec:fmgroupframes}.


\subsection{Frame multiplication assumptions for cross product frames}
\label{sec:crossprod}

Let $\ast :\mathbb C^3 \times \mathbb C^3 \rightarrow
{\mathbb C}^3$ be the
cross product on $\mathbb C^3$ and let $\{i,j,k\}$ be the standard basis,
e.g., $i=(1,0,0)\in\CC^3.$
Therefore, we have
\begin{equation}
\label{eq:crossops}
i\ast j=k, j\ast i=-k, k\ast i=j, i\ast k = -j, j\ast k=i, k\ast j=-i,
\end{equation}
\[
i\ast i=j\ast j =k\ast k=0. 
\]
 The union of 
two tight frames and the zero vector is a tight frame, so if we let
$X = \{x_n\}_{n = 0}^6,$ where $x_0=0,x_1 =i, x_2 =j, x_3=k, x_4=-i, 
x_5=-j, x_6=-k,$ then it is straightforward to check that  $X$ is a tight frame for $\CC^3$ 
with frame bound $2.$

The index operation
corresponding to the frame multiplication is 
\begin{equation}
\label{eq:zz7zz}
   \bullet :\ZZ/7\ZZ  \times \ZZ/7\ZZ \longrightarrow \ZZ/7\ZZ,
\end{equation}
where $\bullet$ is the non-Abelian, non-group operation defined by the following table:

\begin{center}
\begin{tabular}{ccccc}
$1\bullet 2 =3$, & $1\bullet 3 =5$ & $1\bullet 4=0$,&  $1\bullet 5=6$, & $1\bullet 6=2$,\\ 
$2\bullet 1 =6$, & $2\bullet 3=1$, & $2\bullet 4=3$,& $2\bullet 5=0$,& $2\bullet 6 = 4$,\\
$3\bullet 1 =2$, & $3\bullet 2=4$, & $3\bullet 4=5$,& $3\bullet 5=1$,& $3\bullet 6 = 0$,\\
\multicolumn{2}{c}{$n\bullet n=0$,}&\multicolumn{2}{c} {$n\bullet 0=0\bullet n =0$.}
\end{tabular}
\end{center}
We have chosen this definition of $\bullet$ for the following reasons.
As we saw in Example \ref{ex:multprob}, the three frame multiplication assumptions are essential
for defining a meaningful ambiguity function. In Subsection \ref{sec:form}, these
assumptions were based on the formula,
$x_m \ast x_n = x_{m+n},$
used in Equation \eqref{eq:ap1rule}. However, in order to generalize
this point of view, we shall consider the
formula, $x_m \ast x_n = x_{m \bullet n},$ as indicated in Subsection \ref{sec:formala1def}.
provided the corresponding three frame multiplication assumptions can be verified.
In fact, for this cross product example, it is easily checked that
 the frame multiplication assumptions of Equation \eqref{eq:ap1rule} are 
valid when $+$ is replaced by the $\bullet$ operation defined above in \eqref{eq:zz7zz}
and the corresponding table.

Consequently,we can write the
cross product as
\begin{equation}
\label{eq:crossproduct}
      \forall u,v \in \CC^3,\quad   u \times v = u\ast v= \frac{1}{2^2}\sum_{s=1}^{6}
      \sum_{t=1}^{6}\langle u,x_{s}\rangle \langle v,x_{t}\rangle x_{s\bullet t}.
\end{equation}
\[
= \frac{1}{4} \sum_{n = 1}^6 \left(\sum_{j\bullet k = n}\inner{u}{x_j}\inner{v}{x_k} \right)x_n.
\]


One possible application of the above is that, given frame representations for $u, v \in \CC^3,$
Equation \eqref{eq:crossproduct} allows us to compute the frame representation of $u \times v$ without 
the process of going back and forth between the frame representations and
 their standard orthogonal representations. 

There are five non-isomorphic groups of order 8: the Abelians ($\ZZ/8\ZZ, \ZZ/4\ZZ \times \ZZ/2\ZZ,
\ZZ/2\ZZ \times \ZZ/2\ZZ \times\ZZ/2\ZZ $), the dihedral, cf. Example \ref{ex:d3gmatrix}, and the
quaternion group $Q = \{ \pm 1, \pm i, \pm j, \pm k \}.$ The unit of $Q$ is $1,$ the products $ij,$ etc.
are the cross product as in Equation \eqref{eq:crossops}, and $ii = jj = kk =1.$ Clearly, 
$Q$ is
closely related to $X = \{x_n\}_{n = 0}^6,$


\section[]{Frame multiplication}
\label{sec:fm}

We now define the notion of a frame multiplication, that is connected with a
bilinear product on the frame elements, and we analyze its properties.

\begin{defn}[Frame multiplication]Let $X = \{x_j\}_{j \in J}$ be a frame for a 
$d-$dimensional Hilbert space $H$
over ${\mathbb F}$, and let $\bullet: J \times J \rightarrow J$ be a binary operation. 
The mapping $\bullet$ is a {\it frame multiplication} for $X$ or, by abuse of language, a frame 
multiplication for $H$, if it extends to a bilinear product $\ast$ on all of $H$, i.e., if there exists a bilinear 
product $\ast : H \times H \rightarrow H$ such that 
\[
\forall j, k \in J, \quad x_j \ast x_k = x_{j \bullet k}.
\]
If  $(G, \bullet)$ is a group, where $G=J$ and $\bullet$ is a frame multiplication for $X$, then 
we shall also say that $G$ {\it defines a frame multiplication for} $X.$

To fix ideas, we shall only deal with frame multiplication for finite dimensional Hilbert spaces,
but our theory clearly extends, and many of the results are valid for infinite dimensional
Hilbert spaces.
\end{defn}

Let $X = \{x_j\}_{j \in J}$ be a frame for a $d-$dimensional Hilbert space $H.$
By definition, a binary operation $\bullet: J \times J \rightarrow J$ is a frame 
multiplication for $X$ when it extends to a bilinear product by bilinearity
to the entire space $H$. Conversely, if there is a bilinear product 
$\ast: H \times H \rightarrow H$ which agrees with $\bullet$ on $X,$ i.e., 
$x_j \ast x_k = x_{j\bullet k}$, then it must 
be the unique extension given by bilinearity since $X$ spans $H$. Therefore, $\bullet$ defines a frame multiplication 
for $X$ if and only if for every $x = \sum_i a_i x_i \in H$ and $y = \sum_i b_i x_i \in H$, 
\begin{equation}
\label{eq:framemultiplication2}
      x \ast y = \sum_{i \in J}\sum_{j \in J} a_i b_j  x_{i \bullet j}
\end{equation}
is defined and independent of the frame representations used for $x$ and $y$. 

\begin{rem}
Whether or not a particular binary operation is a frame multiplication depends not just on the elements of the 
frame but on the indexing of the frame. For clarity of definitions and later theorems, we make no attempt to 
define a notion of frame multiplication for multi-sets of vectors that is independent of the index set. 

A distinction that must be kept in mind is that $\bullet$ is a set operation on the indices of a frame 
while $\ast$ is a bilinear 
vector product defined on all of $H$. 
\end{rem}

We shall investigate the interplay between bilinear vector products on $H$, frames for $H$ indexed by $J$, 
and binary operations on $J$. For example, if we fix a binary operation $\bullet$ on $J$, then 
for what sort of 
frames indexed by $J$ do we obtain a frame multiplication? Conversely, if we fix a 
frame $X = \{x_j\}_{j\in J}$ for $H,$
then what sort of binary operations on $J$ are frame multiplications for $H$?

\begin{prop}
\label{prop:fmexistence}
Let $X = \{x_j\}_{j \in J}$ be a frame for a 
$d$-dimensional Hilbert space $H,$ and let $\bullet: J \times J \rightarrow J$ be a 
binary operation. Then, $\bullet$ is a frame multiplication for $X$ if and only if
\begin{equation}
\label{eq:fmexistence}
        \forall \{a_i\}_{i \in J} \subseteq \FF \; {\rm and} \; \forall j \in J, \quad \sum_{i \in J}a_i x_i = 0 \,
        {\rm implies} \, 
        \sum_{i \in J}a_i x_{i\bullet j}= 0 \, {\rm and } \, \sum_{i \in J}a_i x_{j\bullet i} = 0.
\end{equation}
\begin{proof}
Suppose $\ast$ is the bilinear product defined by $\bullet$ and $\{a_i\}_{i \in J}$ is a sequence of scalars. 
If $\sum_{i \in J} a_i x_i =0$, then 
\begin{equation*}
     \sum_{i \in J} a_i x_{i \bullet j} = \sum_{i \in J} a_i \left(x_i \ast x_j \right) =
      \left(\sum_{i \in J}a_i x_i \right) \ast x_j = 0 \ast x_j = 0.
\end{equation*}
Similarly, by multiplying on the left by $x_j$, we see that $\sum_{i \in J} a_i x_{j \bullet i} = 0.$

For the converse, suppose that statement \eqref{eq:fmexistence} holds and 
$x = \sum_i a_i x_i = \sum_i c_i x_i$, 
$y = \sum_j b_j x_j = \sum_j d_j x_j$ $\in H$. By \eqref{eq:fmexistence}, we have
\begin{equation}
\label{eq:fmcondition1}
         \forall j \in J, \quad \sum_{i \in J} (a_i - c_i) x_{i \bullet j} = 0
\end{equation}
and
\begin{equation}
\label{eq:fmcondition2}
        \forall i \in J, \quad \sum_{j \in J} (b_j - d_j) x_{i \bullet j} = 0.
\end{equation}
Therefore, using \eqref{eq:fmcondition2} and \eqref{eq:fmcondition1}, we obtain
\[
\sum_{i \in J} \sum_{j \in J} a_i b_j x_{i \bullet j}  = \sum_{i \in J} a_i \sum_{j \in J} b_j x_{i \bullet j} 
= \sum_{i \in J} a_i \sum_{j \in J} d_j x_{i \bullet j}
\]
\[ 
= \sum_{j \in J} d_j \sum_{i \in J} a_i x_{i \bullet j}
= \sum_{j \in J} d_j \sum_{i \in J} c_i x_{i \bullet j} 
= \sum_{i \in J} \sum_{j \in J} c_i d_j x_{i \bullet j},
\]
and, hence,  $\ast$ is well-defined by \eqref{eq:framemultiplication2}.
\end{proof}
\end{prop}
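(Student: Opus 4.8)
The plan is to read condition \eqref{eq:fmexistence} as the precise obstruction to well-definedness. Because $X$ spans $H$, any bilinear $\ast$ agreeing with $\bullet$ on frame vectors is forced to be the formula \eqref{eq:framemultiplication2}, so existence of a frame multiplication is really a question of whether that double sum is \emph{well-defined}: the only thing that can go wrong is that a vector admits several frame expansions (overcompleteness), and the value might depend on the chosen coefficients. Thus I would organize both directions around the link between null coefficient sequences $\sum_i a_i x_i = 0$ and the vanishing of the index-translated sums $\sum_i a_i x_{i\bullet j}$ and $\sum_i a_i x_{j\bullet i}$.

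For the forward direction, suppose $\bullet$ is a frame multiplication with bilinear extension $\ast$. Given a null sequence $\sum_i a_i x_i = 0$ and a fixed $j \in J$, I would compute $\sum_i a_i x_{i\bullet j} = \sum_i a_i (x_i \ast x_j) = \left(\sum_i a_i x_i\right)\ast x_j = 0 \ast x_j = 0$, using only bilinearity in the first slot together with $x_i \ast x_j = x_{i\bullet j}$; running the same computation with $x_j$ multiplied on the left (bilinearity in the second slot) gives $\sum_i a_i x_{j\bullet i} = 0$. This delivers \eqref{eq:fmexistence} directly.

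For the converse, I would \emph{define} $\ast$ by \eqref{eq:framemultiplication2} and verify independence of the chosen expansions. Take $x = \sum_i a_i x_i = \sum_i c_i x_i$ and $y = \sum_j b_j x_j = \sum_j d_j x_j$; then $\{a_i - c_i\}$ and $\{b_j - d_j\}$ are null sequences, so \eqref{eq:fmexistence} yields $\sum_i (a_i - c_i) x_{i\bullet j} = 0$ for each fixed $j$ (left stability) and $\sum_j (b_j - d_j) x_{i\bullet j} = 0$ for each fixed $i$ (right stability). The key step is a two-stage replacement passing through the mixed double sum $\sum_{i,j} a_i d_j x_{i\bullet j}$: first hold $i$ fixed and replace $b_j$ by $d_j$ in the inner sum, then interchange the order of summation, hold $j$ fixed, and replace $a_i$ by $c_i$. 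Each stage uses exactly one clause of \eqref{eq:fmexistence}, and together they give $\sum_{i,j} a_i b_j x_{i\bullet j} = \sum_{i,j} c_i d_j x_{i\bullet j}$, so $\ast$ is well-defined; bilinearity and $x_j \ast x_k = x_{j\bullet k}$ are then immediate from the defining formula.

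The main obstacle I anticipate is the bookkeeping in the converse rather than any analytic subtlety: overcompleteness forces control of \emph{both} index operations, so the well-definedness argument cannot alter the $a$ and $b$ coefficients simultaneously and must instead swap the order of summation between the two replacement steps. The hard part is therefore to arrange the double sum so that each of the two clauses of \eqref{eq:fmexistence} is applied with the correct index held fixed; once that organization is in place the verification is routine.
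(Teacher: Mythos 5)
Your proof is correct and follows essentially the same approach as the paper's: the forward direction uses bilinearity in each slot to push a null coefficient sequence through $\ast$, and the converse establishes well-definedness of \eqref{eq:framemultiplication2} via the same two-stage replacement through the mixed sum $\sum_{i,j} a_i d_j x_{i\bullet j}$, applying one clause of \eqref{eq:fmexistence} per stage with the summation order interchanged in between. There is nothing to add or correct.
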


\begin{defn}[Similarity]
Frames $X = \{x_j\}_{j \in J}$ and $Y = \{y_j\}_{j \in J}$ for a 
$d$-dimensional Hilbert space $H$ are {\it similar} if there exists 
an invertible linear operator $A \in \Lcal(H)$ (see Appendix \ref{sec:unirep})
such that
\[
       \forall j \in J, \quad Ax_j = y_j.
\]
\end{defn}

\begin{prop}
\label{prop:fminvertibleequiv}
Suppose $X = \{x_j\}_{j \in J}$ and $Y = \{y_j\}_{j \in J}$ are frames for 
a $d$-dimensional Hilbert space $H,$ and that $X$ is similar to $Y$. 
Then, a binary operation $\bullet : J \times J \rightarrow J$ is a frame multiplication for $X$ 
if and only if it is a 
frame multiplication for $Y$.
\begin{proof}
Because $A^{-1}y_j = x_j$ and $A^{-1}$ is also an invertible operator, we need only prove one 
direction of the proposition. Suppose $\bullet$ is a frame multiplication for $X$ 
and that $\sum_i a_i y_i = 0$. We have  
\[
0 = \sum_{i \in J} a_i y_i = \sum_{i \in J} a_i A x_i = A\left( \sum_{i \in J} a_i x_i \right),
\]
and since $A$ is invertible it follows that $\sum_i a_i x_i = 0$. By 
Proposition \ref{prop:fmexistence}, and because
$\bullet$ is a frame multiplication for $X$, we can assert that
\[
   \forall j \in J, \quad \sum_{i \in J}a_i x_{i\bullet j}= 0 \text{ and } \sum_{i \in J}a_i x_{j\bullet i} = 0.
\]
Applying $A$ to both of these equations yields:
\[
     \forall j \in J, \quad \sum_{i \in J}a_i y_{i\bullet j}= 0 \text{ and } \sum_{i \in J}a_i y_{j\bullet i} = 0.
\]
Therefore, by Proposition \ref{prop:fmexistence}, $\bullet$ is a frame multiplication for $Y$. 
\end{proof}
\end{prop}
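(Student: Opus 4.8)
The plan is to lean entirely on Proposition \ref{prop:fmexistence}, which reformulates ``$\bullet$ is a frame multiplication'' as a purely linear-algebraic condition on the vanishing relations $\sum_i a_i x_i = 0$ among the frame vectors. The point is that such vanishing relations, together with their $\bullet$-translates, are precisely what an invertible linear map preserves, so the two frames $X$ and $Y$ must share exactly the same frame multiplications.

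First I would reduce to a single implication. Similarity is symmetric: if $A$ is invertible with $Ax_j = y_j$ for all $j$, then $A^{-1}$ is invertible with $A^{-1}y_j = x_j$, so it suffices to prove that if $\bullet$ is a frame multiplication for $X$, then it is one for $Y$. Next, given a scalar sequence $\{a_i\}_{i \in J}$ with $\sum_i a_i y_i = 0$, I would rewrite this using $y_i = Ax_i$ and linearity of $A$ as $A\bigl(\sum_i a_i x_i\bigr) = 0$; injectivity of $A$ then forces $\sum_i a_i x_i = 0$. Applying the forward direction of Proposition \ref{prop:fmexistence} to $X$ yields, for every $j \in J$, the relations $\sum_i a_i x_{i \bullet j} = 0$ and $\sum_i a_i x_{j \bullet i} = 0$. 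Applying $A$ to these and using $Ax_{i \bullet j} = y_{i \bullet j}$ transports them to $\sum_i a_i y_{i \bullet j} = 0$ and $\sum_i a_i y_{j \bullet i} = 0$. Invoking the converse direction of Proposition \ref{prop:fmexistence} then certifies $\bullet$ as a frame multiplication for $Y$.

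I do not expect a genuine obstacle; all the substance already resides in Proposition \ref{prop:fmexistence}. The only points demanding care are that the characterization is used in \emph{both} directions (the forward direction to peel the vanishing relations off $X$, the converse to graft them onto $Y$), and that it is precisely the linearity and injectivity of $A$ that permit each individual relation to be carried across intact.

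As a cleaner conceptual alternative I would record the explicit conjugation construction, which bypasses null spaces altogether: letting $\ast$ denote the bilinear extension associated with $X$, define a product on $H$ by $y \ast' y' := A\bigl(A^{-1}y \ast A^{-1}y'\bigr)$. This is bilinear, being a composition of the bilinear map $\ast$ with the linear maps $A$ and $A^{-1}$, and it satisfies $y_j \ast' y_k = A\bigl(x_j \ast x_k\bigr) = A\bigl(x_{j \bullet k}\bigr) = y_{j \bullet k}$, so $\bullet$ extends to a bilinear product on $Y$ directly. Either route completes the argument.
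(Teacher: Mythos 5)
Your main argument is correct and is essentially identical to the paper's proof: reduce to one implication via symmetry of similarity, transport a vanishing relation $\sum_i a_i y_i = 0$ back through $A^{-1}$ to $X$, apply Proposition \ref{prop:fmexistence} forward for $X$, push the resulting relations through $A$, and apply Proposition \ref{prop:fmexistence} in reverse for $Y$.

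Your alternative conjugation argument is a genuinely different and arguably cleaner route that the paper does not take: defining $y \ast' y' = A\bigl(A^{-1}y \ast A^{-1}y'\bigr)$ produces the required bilinear extension for $Y$ explicitly, bypassing the null-space characterization of Proposition \ref{prop:fmexistence} entirely. What it buys is directness — the extension is exhibited rather than certified to exist — and it makes transparent that frame multiplication is a similarity invariant because bilinearity is preserved under conjugation by invertible linear maps. What the paper's route buys in exchange is that it exercises the characterization in Proposition \ref{prop:fmexistence}, which is the workhorse for the later results (e.g., Theorem \ref{thm:gframes}), so the reader sees how that criterion is deployed. Both arguments are complete.
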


\begin{defn}[Multiplications of a frame]
Let $X = \{x_j\}_{j \in J}$ be a frame for a $d$-dimensional Hilbert space $H$. 
The {\it multiplications} of $X$ are defined and denoted by
\begin{equation*}
     {\rm mult}(X) = \{{\rm frame}\, {\rm multiplications} \; \bullet : J \times J \rightarrow J \;
     {\rm for} \, X\}.
\end{equation*} 
$ {\rm mult}(X)$ can be all functions (for example when $X$ is a basis), empty, or somewhere 
in-between.
\end{defn}

\begin{example}
\label{ex:nofm}
Let $\alpha, \beta > 0$, $\alpha \neq \beta$, and $\alpha + \beta <1$. Define $X_{\alpha, \beta} = 
\{x_1 = (1, 0)^t, x_2 = (0,1)^t, x_3 = \left(\alpha, \beta \right)^t\}$. Notationally, the superscript $t$
denotes the {\it transpose} of a vector.
Then, $X_{\alpha, \beta}$ is a frame for 
$\CC^2$ and ${\rm mult}(X_{\alpha,\beta}) = \emptyset$. 
A straightforward way to prove that ${\rm mult}(X_{\alpha,\beta}) = \emptyset$ is to show that there are no bilinear operations 
on $\CC^2$ which leave $X_{\alpha,\beta}$ invariant. Suppose $\ast$ were such a bilinear operation. 
We shall obtain a contradiction.

First, we have the 
linear relation $x_3 = \alpha x_1 + \beta x_2$. Hence, by the bilinearity of $\ast$,
\begin{equation}
\label{eq:existenceexample}
x_1 \ast x_3 = \alpha x_1 \ast x_1 + \beta x_1 \ast x_2.
\end{equation}
Second, $\norm{x_1}_2 = \norm{x_2}_2 = 1$ and $\norm{x_3}_2 < 1$, where the inequality follows 
from the facts that $\norm{x_3}_2 = (\alpha^2 + \beta^2)^{1/2}$ and 
\[
    0 < \alpha^2 + \beta^2 = (\alpha + \beta)^2 - 2\alpha\beta <  (\alpha +\beta)^2 < 1.
\]
By the properties of $\alpha,\,\beta$,
and using Equation \eqref{eq:existenceexample}, we have that
\begin{equation}
\label{eq:existenceexineq}
     \exists\,m,\,n \; {\rm such}\,{\rm that} \; \norm{x_1 \ast x_3}_2 =
    \norm{\alpha x_1\ast x_1 +  \beta x_1 \ast x_2}_2 
\end{equation}
\[
      = \norm{\alpha x_m + \beta x_n}_2  \leq \alpha \norm{x_m}_2 + \beta \norm{x_n}_2 < 1.
\]
Thus, since $\ast$ leaves $X_{\alpha,\beta}$ invariant, 
we obtain that $x_1 \ast x_3 = x_3$ by \eqref{eq:existenceexineq}.
 Furthermore, substituting $x_3$ for 
$x_1 \ast x_3$ in Equation \eqref{eq:existenceexample} 
and using the assumption that $\alpha \neq \beta$, yield $x_1 \ast x_1 = x_1$ and $x_1 \ast x_2 = x_2$. 
Performing the analogous calculation
on $x_3 \ast x_2,$ in place of $x_1 \ast x_3$ above, shows that $x_2 \ast x_2 = x_2$ 
and $x_1 \ast x_2 = x_1,$ the desired contradiction.
\end{example}

\setlength{\unitlength}{1cm}
\begin{figure}[h]
\begin{center}
\frame{\begin{picture}(6,6)
\linethickness{1pt}
\put(1,1){\vector(1,0){4}}
\put(1,1){\vector(0,1){4}}
\put(1,1){\vector(2,1){2}}
\put(4.5,1.25){$x_1$}
\put(0.5,5){$x_2$}
\put(2.5,2.25){$x_3$}
\end{picture}}
\end{center}
\caption[A frame with no frame multiplications]{The frame $X_{\alpha,\beta}$ from 
Example \ref{ex:nofm} for $\alpha = 1/2$ and $\beta = 1/4$. This frame has no frame multiplications.}
\end{figure}
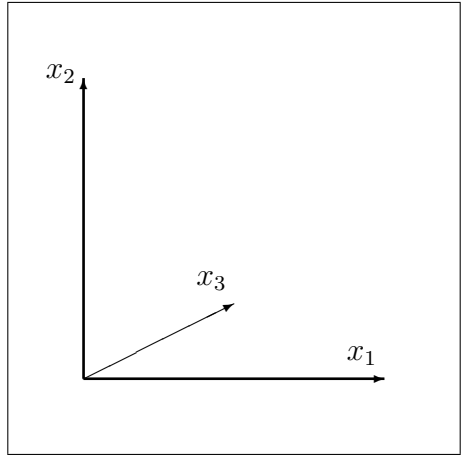

Of particular interest, Proposition \ref{prop:fminvertibleequiv} tells us that the 
canonical dual frame $\{S^{-1}x_j\}_{j \in J}$ 
and the canonical tight frame $\{S^{-1/2}x_j\}_{j \in J}$ share the same frame
multiplications as the original frame $X$. 
Because of this, we shall focus our attention on tight frames. An invertible element 
$V \in \Lcal(H)$ mapping an 
$A$-tight frame $X = \{x_j\}$ (frame constant $A$) to an $A^\prime$-tight frame $Y = \{y_j\}$, as in 
Proposition \ref{prop:fminvertibleequiv}, is a positive multiple 
of some $U \in \Ucal(H),$ the space of unitary operators
on $H,$ see Appendix \ref{sec:unirep}. Indeed, we have
\[
     A \norm{V^\ast x}^2 = \sum_{j \in J} \left| \inner{V^\ast x}{x_j}\right|^2 =  
     \sum_{j \in J} \left| \inner{x}{V x_j}\right|^2 = \sum_{j \in J} \left| \inner{x}{y_j}\right|^2 = A^\prime \norm{x}^2.
\]
This leads us to a notion of equivalence for tight frames that sounds stronger than similarity but is actually 
just the restriction of similarity to the class of tight frames.

\begin{defn}[Equivalence of tight frames]
Tight frames $X = \{x_j\}_{j \in J}$ and $Y = \{y_j\}_{j \in J}$ for a 
separable Hilbert space $H$ are {\it unitarily equivalent} 
if there is $U \in \Ucal(H)$ and a positive constant $c$ such that
\[
         \forall j \in J, \quad x_j = cUy_j.
\]
\end{defn}

Whenever we speak of equivalence classes for tight frames we shall mean under unitary equivalence. 
For finite frames unitary equivalence can be stated in terms of Gramians: 

\begin{prop}[\cite{ValWal2005}]
\label{prop:gram}
Let $H$ be a $d$-dimensional Hilbert space, and let
$X = (x_1, \ldots, x_N)$ and $Y = (y_1, \ldots, y_N)$ be sequences of vectors. Suppose 
${\rm span}(X) = H,$ and so $X$ is a frame for $H.$ There exists $U \in \Ucal(H)$ 
such that $Ux_i = y_i$,for every $i = 1, \ldots, n$,  if and only if
\begin{equation*}
      \forall i, j \in \{1, \ldots, N\} \quad \inner{x_i}{x_j} = \inner{y_i}{y_j}, 
\end{equation*}
i.e., the Gram matrices of $X$ and $Y$ are equal.
\end{prop}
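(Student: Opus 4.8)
The forward implication is immediate and will take only a line: if $U \in \Ucal(H)$ satisfies $Ux_i = y_i$ for all $i$, then, since $U$ preserves inner products, $\inner{y_i}{y_j} = \inner{Ux_i}{Ux_j} = \inner{x_i}{x_j}$ for all $i,j$, so the two Gram matrices coincide. The substance of the proposition is therefore the converse, and my plan is to construct $U$ directly from its prescribed action $x_i \mapsto y_i$ on the spanning sequence $X$.

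Since ${\rm span}(X) = H$, every $z \in H$ can be written as $z = \sum_{i=1}^{N} a_i x_i$ for some scalars $a_i \in \FF$, and I would \emph{define} $Uz = \sum_{i=1}^{N} a_i y_i$. The one genuine difficulty is well-definedness: because $X$ is a frame with possibly $N > d$, the $x_i$ are in general linearly dependent, so the coefficients $a_i$ are not unique. The key observation that dissolves this obstacle is a single norm computation valid for arbitrary scalars $\{a_i\}$,
\[
\norm{\sum_{i=1}^{N} a_i y_i}^2 = \sum_{i=1}^{N}\sum_{j=1}^{N} a_i \overline{a_j}\,\inner{y_i}{y_j} = \sum_{i=1}^{N}\sum_{j=1}^{N} a_i \overline{a_j}\,\inner{x_i}{x_j} = \norm{\sum_{i=1}^{N} a_i x_i}^2,
\]
where the middle equality uses precisely the hypothesis that the Gram matrices are equal. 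In particular, if $\sum_i a_i x_i = 0$ then $\sum_i a_i y_i = 0$, so two representations of the same $z$ produce the same value $Uz$; hence $U$ is well-defined, and linearity is then clear from the formula.

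The same identity shows that $U$ is an isometry, i.e., $\norm{Uz} = \norm{z}$ for all $z \in H$, and by polarization $U$ preserves inner products. Being norm-preserving, $U$ is injective; and since $H$ is finite dimensional, injectivity forces surjectivity, so $U$ is a bijective inner-product-preserving linear map, that is, $U \in \Ucal(H)$. As a byproduct this gives ${\rm span}(Y) = U(H) = H$, so $Y$ is automatically a frame as well, and by construction $Ux_i = y_i$, completing the converse. The whole argument hinges on the displayed norm identity, which is exactly where equality of Gram matrices enters and which simultaneously delivers well-definedness and the isometry property; \textbf{that} is the step requiring care, the remainder being routine finite-dimensional linear algebra.
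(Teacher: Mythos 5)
Your proof is correct. Note that the paper itself offers no proof of this proposition---it is quoted from Vale and Waldron \cite{ValWal2005}---so there is no internal argument to compare against; your route (define $U$ on the spanning sequence, use equality of the Gram matrices to obtain the norm identity that simultaneously gives well-definedness and the isometry property, then invoke finite-dimensionality for surjectivity) is the standard one and is essentially the proof in the cited reference, so it can stand as a self-contained justification of the statement.
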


Thus, from Proposition \ref{prop:gram}, tight frames $X$ and $Y$ are unitarily equivalent if and only 
if one of their Gramians is a scaled version of the other. In the case where both $X$ and $Y$ are 
equivalent Parseval 
frames their Gramians are equal.

We are using Deguang Han and David Larson's \cite{HanLar2000} 
definition of 
similarity and unitary equivalence. In particular, the ordering of the 
frame, and 
not just the unordered set of frame elements, is important. This choice is in concert with the way 
in which we have defined frame multiplication, i.e., with a fixed index for our frame. Also, we have made 
no attempt to define equivalence for frames indexed by different sets. This can be done, 
and results can then 
proven about the correspondence of frame multiplications between similar or equivalent frames under 
this new definition. However, allowing frames with two different index sets of the same cardinality to be 
considered similar only obfuscates our results.

\begin{thm}[Multiplications of equivalent frames]
\label{thm:unitaryequiv}
Let $X = \{x_j\}_{j \in J}$ and $Y = \{y_j\}_{j \in J}$ be finite tight frames for 
a $d$-dimensional Hilbert space $H$. If $X$ is unitarily equivalent 
to $Y$, then 
${\rm mult}(X) = {\rm mult}(Y)$.
\begin{proof}
Since $X$ and $Y$ are unitarily equivalent they are similar. Therefore, by 
Lemma \ref{prop:fminvertibleequiv}, 
$\bullet: J \times J \rightarrow J$ defines a frame multiplication on $X$ if and only if it defines a frame multiplication 
on $Y$, that is, ${\rm mult}(X) = {\rm mult}(Y)$.
\end{proof}
\end{thm}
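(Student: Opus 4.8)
The plan is to reduce the statement directly to Proposition \ref{prop:fminvertibleequiv}, which already asserts that similar frames share the same frame multiplications. The key observation is that unitary equivalence is nothing more than a restriction of similarity to the class of tight frames, so essentially all the work has already been carried out in the earlier proposition.

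First I would unpack the definition of unitary equivalence: there exist $U \in \Ucal(H)$ and a positive constant $c$ such that $x_j = cUy_j$ for every $j \in J$. Solving this relation for $y_j$ gives $y_j = c^{-1}U^{-1}x_j$. Setting $A = c^{-1}U^{-1} \in \Lcal(H)$, I note that $A$ is the composition of the invertible operator $U^{-1}$ with multiplication by the nonzero scalar $c^{-1}$; hence $A$ is itself invertible and satisfies $Ax_j = y_j$ for all $j \in J$. Therefore $X$ is similar to $Y$ in the precise sense of the Similarity definition, the witnessing invertible operator being the rescaled unitary $c^{-1}U^{-1}$.

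With similarity established, I would invoke Proposition \ref{prop:fminvertibleequiv} directly: since $X$ is similar to $Y$, a binary operation $\bullet: J \times J \rightarrow J$ is a frame multiplication for $X$ if and only if it is a frame multiplication for $Y$. Passing to the set of all such $\bullet$ on each side yields ${\rm mult}(X) = {\rm mult}(Y)$, which is exactly the assertion of the theorem.

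Because the result is in effect a corollary of the preceding proposition, there is no genuine obstacle to overcome. The only point requiring verification—and it is immediate—is that the unitary operator, suitably rescaled, furnishes the invertible linear map witnessing similarity; everything substantive is delegated to Proposition \ref{prop:fminvertibleequiv}, whose proof already handles the transport of the defining relations $\sum_i a_i x_{i\bullet j} = 0$ and $\sum_i a_i x_{j\bullet i} = 0$ across an invertible operator.
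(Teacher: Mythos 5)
Your proposal is correct and follows essentially the same route as the paper: observe that unitary equivalence implies similarity (your explicit witness $A = c^{-1}U^{-1}$ just makes precise what the paper asserts in one sentence) and then apply Proposition \ref{prop:fminvertibleequiv}. No further comment is needed.
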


The converse of Theorem \ref{thm:unitaryequiv} is not valid. The multiplications of a tight frame 
provide a coarser equivalence relation than unitary equivalence. In fact, 
as Example \ref{ex:uncount} demonstrates, we may have 
uncountably many equivalence classes of tight frames, that have  the same multiplications. 

\begin{example}
\label{ex:uncount}
Let $\{\alpha_i\}_{i = 1,2}$ and $\{\beta_i\}_{i = 1,2}$ be real numbers such that 
$\alpha_1 > \beta_1 > \alpha_2 > \beta_2 > 0$,  $\alpha_1 + \beta_1<1$, and $\alpha_2 + \beta_2< 1$. 
Define $X_{\alpha_1, \beta_1}$ and $X_{\alpha_2,\beta_2}$ as in Example \ref{ex:nofm}. 
Then ${\rm multi}(X_{\alpha_1,\beta_1}) = 
{\rm multi}(X_{\alpha_2,\beta_2}) = \emptyset$. It can be easily shown, by checking the six cases of where to 
map $(1,0)^t$ and $(0,1)^t$, that there is no invertible operator $A$ such that $AX_{\alpha_1,\beta_1} = 
X_{\alpha_2,\beta_2}$ as sets. Therefore, there are no $c > 0$ and $U \in \Ucal(\RR^2)$ such that $cU$ maps 
between the canonical tight frames $S_1^{-1/2}X_{\alpha_1,\beta_1}$ and $S_2^{-1/2}X_{\alpha_2,\beta_2}$ 
(for any reordering of the elements) and $S_1^{-1/2}X_{\alpha_1,\beta_1}$ 
and $S_2^{-1/2}F_{\alpha_2,\beta_2},$ 
are not unitarily equivalent. Hence, there are uncountably many equivalence classes of tight frames, that
have
the same empty set of frame multiplications.
\end{example}

In contrast to Example \ref{ex:uncount}, we shall see in 
Section \ref{sec:fmgroupframes} that if a tight frame has a 
particular frame multiplication in terms of a group operation, then it belongs to one of only finitely 
many equivalence classes of tight frames, that share the same group operation as a 
frame multiplication. With this goal, we close this subsection with a characterization of bases in terms of 
their multiplications, once we exclude the degenerate one case where 
one can have a frame consisting of a single repeated vector).

\begin{prop}
Let $X = \{x_j\}_{j \in J}$ be a finite frame for a 
$d$-dimensional Hilbert space $H,$ and suppose $dim(H) > 1.$
If ${\rm multi}(X) = \{ {\rm all}\,{\rm functions}  \, \bullet: J \times J \rightarrow J\}$, then $X$ 
is a basis for $H.$
If, in addition, $X$ is a tight, respectively, Parseval frame for $H,$ then $X$ is an orthogonal,
respectively, orthonormal basis for $H.$
\begin{proof}
Suppose that $\sum_i a_i x_i = 0$, $j_0 \in J$, and $x_{j_1}, x_{j_2} \in X$ are linearly 
independent. Let $\bullet: J \times J \rightarrow J$ be the function sending all products to $j_2$ 
except that
\[
     \forall j \in J, \quad j_0 \bullet j = j_1.
\]
By assumption, $\bullet \in {\rm multi}(X)$. Therefore, by Proposition \ref{prop:fmexistence},
we have
\[
     \forall j \in J, \quad 0 = \sum_{i \in J} a_i x_{i \bullet j} = a_{j_0} x_{j_1} + \sum_{i \neq j_0}a_i x_{j_2}.
\]
Since $x_{j_1}$ and $x_{j_2}$ are linearly independent, $a_{j_0} = 0$, and since $j_0$ was 
arbitrary, $X$ is a linearly independent set. The last statement of the proposition follows from 
the elementary fact that a basis, that satisfies Parseval's identity or a scaled version
of it, is an orthogonal set.
\end{proof}
\end{prop}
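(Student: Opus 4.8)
The plan is to deduce that $X$ is a basis purely from linear independence, exploiting the extreme freedom in the hypothesis that \emph{every} function $\bullet: J \times J \rightarrow J$ is a frame multiplication, and then to extract orthogonality in the tight case from the frame operator. First I would note that, since $X$ is a frame for $H$ with $\dim H > 1$, it spans $H$ and therefore contains two linearly independent elements $x_{j_1}, x_{j_2}$; these two indices are the only values that the operation I build will ever assume.

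The crux is to show that any relation $\sum_{i \in J} a_i x_i = 0$ is trivial. Fixing an arbitrary $j_0 \in J$, I would define $\bullet$ to send every product to $j_2$ except that $j_0 \bullet j = j_1$ for all $j \in J$. By hypothesis this $\bullet$ lies in ${\rm multi}(X)$, so Proposition \ref{prop:fmexistence} guarantees that $\sum_{i \in J} a_i x_{i \bullet j} = 0$ for each $j$. With this choice of $\bullet$ the sum collapses to
\[
    a_{j_0}\, x_{j_1} + \Bigl(\sum_{i \neq j_0} a_i\Bigr)\, x_{j_2} = 0,
\]
so the linear independence of $x_{j_1}$ and $x_{j_2}$ forces $a_{j_0} = 0$. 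As $j_0$ was arbitrary, all coefficients vanish, $X$ is linearly independent, and hence a basis for $H$.

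For the tight and Parseval refinements I would argue by uniqueness of expansions. If $X$ is $A$-tight, then its frame operator is $S = AI$, so the reconstruction identity applied to $x = x_k$ reads $\sum_{j \in J} \inner{x_k}{x_j}\, x_j = A\,x_k$. Since $X$ is now known to be a basis, this expansion of $A x_k$ is unique, which forces $\inner{x_k}{x_j} = 0$ for $j \neq k$ and $\norm{x_k}^2 = A$; thus $X$ is orthogonal, and orthonormal precisely when $A = 1$, i.e., in the Parseval case. The step I expect to demand the most care is the construction of $\bullet$: one must check that funneling all products into the two-element target $\{j_1, j_2\}$ genuinely isolates the single coefficient $a_{j_0}$, and it is exactly here that the hypothesis $\dim H > 1$ is indispensable, since it is what supplies the two linearly independent frame vectors needed to separate the $x_{j_1}$ and $x_{j_2}$ terms.
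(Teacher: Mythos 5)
Your proposal is correct and follows essentially the same route as the paper: the identical construction of $\bullet$ (all products sent to $j_2$ except $j_0 \bullet j = j_1$), the same appeal to Proposition \ref{prop:fmexistence}, and the same linear-independence argument forcing $a_{j_0} = 0$. Your treatment of the tight/Parseval case merely fleshes out, via the frame operator and uniqueness of basis expansions, the ``elementary fact'' that the paper cites without proof, so it is an expansion rather than a different approach.
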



\section{Harmonic frames and group frames}
\label{sec:harmgroupframes}


\subsection{Background}
The central part of our theory in Section \ref{sec:fmgroupframes}
depends on the well-established setting of harmonic frames
and group frames. We review that
material here. We shall see that harmonic frames are group frames.

These are two of several 
related classes of frames and codes that have been the object of recent study. B\"{o}lcskei and 
Eldar \cite{BolEld2003} (2003) 
define {\it geometrically uniform} frames as the orbit of a generating vector under an Abelian group of unitary 
matrices. A signal space code was called {\it geometrically uniform} by Forney \cite{forn1991} (1991) or a 
{\it group code} by Slepian \cite{slep1968} (1968) 
if its symmetry group (a group of isometries) acts transitively. {\it Harmonic frames} are projections of the rows 
or columns of the character table (Fourier matrix) of an Abelian group. Georg Zimmermann \cite{zimm2001}
and G{\"o}tz Pfander [unpublished] independently introduced and provided substantial properties
of  harmonic frames at Bommerholz in 1999,
cf. \cite{HanLar2000} (2000), \cite{CasKov2003} (2003), \cite{HeaStr2003} (2003), \cite{ValWal2005} (2005),  
\cite{hirn2010} (2010), \cite{ValWal2010} (2010), \cite{ChiWal2011} (2011).  
In \cite{ValWal2005} it was shown 
that harmonic frames and geometrically uniform tight frames 
are equivalent and can be characterized by their Gramian. 

Let us both expand and focus on of the definition of a harmonic frame in the
previous paragraph.
It is a well known result that the rows and columns of the character table of an Abelian group 
are orthogonal. This fact combined with the direction of Naimark's theorem,
Theorem \ref{thm:naimark}, asserting that the orthogonal projection of an orthogonal basis 
is a tight frame, motivates considering the class of
equal-norm frames $X$ of $N$ vectors for a $d$-dimensional Hilbert space $H$ 
that arise from the 
character table of an Abelian group, i.e., equal norm frames given by the columns 
of a submatrix obtained by 
taking $d$ rows of the character table of an Abelian group of order $N$.

With more precision, we
state the following definition.

\begin{defn}[Harmonic frame for an Abelian group]
Let $(G, \bullet)= \{g_1, \ldots, g_N\}$ be a finite Abelian group with dual group 
$\{\gamma_1, \ldots, \gamma_N\}.$ The $N \times N$ matrix with $(j,k)$ entry
$\gamma_k(g_j)$ is the {\it character table} of $G.$ Let $K \subseteq \{1, \ldots, N\},$
where $|K| = d \leq N,$ and with columns indexed by $k_1, \ldots, k_d.$ 
Let $U \in \Ucal(\CC^d).$ The {\it harmonic frame} $X = X_{G,K,U}$ for  $\CC^d$ is
\[
    X = \{U(\gamma_{k_1}(g_j), \ldots, \gamma_{k_d}(g_j)) : j = 1, \ldots, N\}.
\] 
\end{defn}

Given $G, K, $ and $U = I.$ Then, $X$ is the {\it DFT - FUNTF} on $G$ for 
$\CC^d.$ In this case, if $G = \ZZ/N\ZZ,$ then $X$ is the usual
{\it DFT - FUNTF} for $\CC^d.$

A fundamental characterization of harmonic frames is due to Vale and Waldron
\cite{ValWal2005} (2005), and the intricate evaluation of the number of
harmonic frames of prime order is due to Hirn \cite{hirn2010} (2010.



\subsection{Group frames}
\label{sec:groupframes}

We begin with the first definition of a group frame from 
Han \cite{han-1997a} (1997),  where the associated representation $\pi$ is called a 
{\it frame representation}, also see  \cite{HanLar2000} by Han and Larson (2000).

\begin{defn}
\label{def:groupframe1}
Let $(G, \bullet)$ be a finite group. A finite frame $X$ for a $d$-dimensional
Hilbert space $H$ is a {\it group frame} if there exists 
$\pi : G \rightarrow \mathcal{U}(H)$, a unitary representation of $G$, and $x \in H$ such that
\[
      X = \{\pi(g)x\}_{g \in G}.
\]
\end{defn}

If $X$ is a group frame, then $X$ is generated by the orbit of any one of its elements 
under the 
action of $G$, and if $X$ contains $N$ unique vectors, then each element of $X$ is 
repeated $|G|/N$ times. 
If $e$ is the group identity, then we fix an ``identity" element $x_e$ of $X,$  and write 
$X = \{x_g\}_{g \in G},$ where $x_g = \pi(g)x_e$. From this we see that group frames are 
frames for which there exists an indexing by the group $G$ such that
\[
      \pi(g)x_h = \pi(g)\pi(h)x_e = \pi(g \bullet h)x_e = x_{g \bullet h}.
\]
This leads to a second, essentially equivalent, definition of a group frame for a frame already 
indexed by $G$. This is the definition used by Vale and Waldron in \cite{ValWal2008}.

\begin{defn}[Group frame]
\label{def:groupframe2}
Let $(G, \bullet)$ be a finite group, and let $H$
be a $d$-dimensional Hilbert space. A finite tight frame $X = \{x_g\}_{g \in G}$ for $H$ is a 
{\it group frame} if there exists 
\[
       \pi : G \longrightarrow \mathcal{U}(H),
\]
a unitary representation of $G,$ such that
\[
   \forall g, h \in G, \quad \pi(g)\,x_h = x_{g \bullet h}.
\]
\end{defn}

\begin{example}
The difference between Definitions \ref{def:groupframe1} and \ref{def:groupframe2}
is that in Definition \ref{def:groupframe2} we begin with a frame as a sequence indexed by $G,$ 
and then ask 
whether a particular type of representation exists. In the first definition we began with 
only a multi-set of vectors 
and asked whether an indexing exists such that the second definition holds. For example, 
let $G = \ZZ/4\ZZ = 
(\{0,1,2,3\},+)$ and consider the frame $X = \{x_0 = 1, x_1 = -1, x_2
= i, x_3 = -i\}$ for $\CC$. $X$ would be a group frame under Definition \ref{def:groupframe1}, 
because there are 
two one-dimensional representations of $G$ that generate $X.$ This is clear from the Fourier 
matrix of $\ZZ/4\ZZ$. 
However, it would not qualify as a group frame under Definition \ref{def:groupframe2}, because 
the representation $\pi$ would have to 
satisfy $\pi(1)x_0 = x_1,$ i.e., $\pi(1)1 = -1.$
There is one one-dimensional representation 
of $\ZZ/4\ZZ$ which satisfies this, but it does not generate $X$. Indeed, it is defined by 
$\pi(0) = 1, \pi(1) = -1, \pi(2) = 1, \pi(3) = -1$. 
\end{example}

In keeping with our view that a frame is a sequence with a fixed index
set, we shall use the second 
definition.

\begin{example}
Harmonic frames are group frames.
\end{example}

Vale and Waldron noted in \cite{ValWal2008} that if $X = \{x_g\}_{g \in G}$ is a 
group frame, then its Gramian matrix $(G_{g,h}) = (\inner{x_h}{x_g})$ has a special form:
\begin{equation}
\label{eq:gframegramian}
      \inner{x_h}{x_g} = \inner{\pi(h)x_g}{\pi(g)x_g} = \inner{x_g}{\pi(h^{-1} \bullet g)x_g},
\end{equation}
i.e., the $g$-$h$-entry is a function of $h^{-1} \bullet g$. 

\begin{defn}[G-matrix]
Let $G$ be a finite group. A matrix $A = (a_{g,h})_{g, h \in G}$ is called a {\it G-matrix} 
if there exists a function $\nu: G \rightarrow \CC$ such that
\begin{equation*}
         \forall g, h \in G, \quad a_{g,h} = \nu(h^{-1} \bullet g).
\end{equation*}
\end{defn}

Vale and Waldron \cite{ValWal2008} were then able to prove 
essentially the following 
theorem using an 
argument that hints at a connection to frame multiplication. We include a version of their 
proof and highlight the connections to our theory.

\begin{thm}
\label{thm:gmatrix}
Let $G$ be a finite group. A frame $X = \{x_g\}_{g \in G}$ for a 
$d$-dimensional Hilbert space $H$ is a 
group frame if and only if its Gramian is a G-matrix.

\begin{proof}
If $X$ is a group frame, then Equation \eqref{eq:gframegramian} implies its Gramian is the 
G-matrix defined by $\nu(g) = \inner{x_e}{\pi(g)x_e}$.

For the converse, suppose the Gramian of $X$ is a G-matrix. Let $S$ be the frame operator, 
and let $\widetilde{x}_g = S^{-1}x_g$ be the canonical dual frame elements. 
Each $x \in H$ has the frame decomposition
\begin{equation}
\label{eq:gmatrixproof1}
           x = \sum_{h \in G} \langle x, \widetilde{x}_h\rangle x_h.
\end{equation}
For each $g \in G$, define a linear operator $U_g:H \rightarrow H$ by
\begin{equation*}
       \forall x \in H, \quad U_g(x) = \sum_{h \in G} \langle x, \widetilde{x}_h\rangle x_{g \bullet h}.
\end{equation*}
Since the Gramian of $X$ is a G-matrix, we have
\begin{equation}
\label{eq:gmatrixproof3}
           \forall g, h, k \in G, \quad \inner{x_{g \bullet h}}{x_{g \bullet k}} 
           = \nu((g \bullet h)^{-1}g \bullet k) 
           = \nu(h^{-1} \bullet k) = \inner{x_{h}}{x_{k}}.
\end{equation}
The following calculation shows that $U_g$ is unitary, and the
calculation itself follows from \eqref{eq:gmatrixproof1} and \eqref{eq:gmatrixproof3}:
\[
        \inner{U_g(x)}{U_g(y)} 
        = \inner{\sum_{h \in G} \langle x, \widetilde{x}_h\rangle x_{g \bullet h}}
        {\sum_{k \in G} \langle y, \widetilde{x}_k\rangle x_{g \bullet k}} 
\]
\[
    = \sum_{h \in G} \sum_{k \in G} \langle x, \widetilde{x}_h\rangle 
    \overline{\langle y, \widetilde{x}_k\rangle} \inner{x_{g \bullet h}}{x_{g \bullet k}}
     = \sum_{h \in G} \sum_{k \in G} \langle x, \widetilde{x}_h\rangle 
      \overline{\langle y, \widetilde{x}_k\rangle} \inner{x_{h}}{x_{k}}
\]
\[
          = \inner{\sum_{h \in G} \langle x, \widetilde{x}_h\rangle x_{h}}
          {\sum_{k \in G} \langle y, \widetilde{x}_k\rangle x_{k}} 
          = \inner{x}{y}.
\]
Also, for every $h, k \in G,$ we compute
\[
\inner{U_g(x_h)-x_{g \bullet h}}{x_{g \bullet k}} = \inner{U_g(x_h)}{x_{g \bullet k}}
-\inner{x_{g \bullet h}}{x_{g \bullet k}}
\]
\[
=\inner{\sum_{m \in G}\inner{x_h}{\widetilde{x}_m}x_{g \bullet m}}{x_{g \bullet k}}
-\inner{x_{g \bullet h}}{x_{g \bullet k}}
\]
\[
=\sum_{m \in G}\inner{x_h}{\widetilde{x}_m}\inner{x_{g \bullet m}}{x_{g \bullet k}}
-\inner{x_{g \bullet h}}{x_{g \bullet k}}
=\sum_{m \in G}\inner{x_h}{\widetilde{x}_m}\inner{x_{m}}{x_{k}}-\inner{x_{h}}{x_{k}}
\]
\[
=\inner{\sum_{m \in G}\inner{x_h}{\widetilde{x}_m}x_{m}}{x_{k}}-\inner{x_{h}}{x_{k}}\\
=\inner{x_h}{x_k}-\inner{x_h}{x_k}
= 0.
\]
Letting $k$ vary over all of $G,$ it follows that $U_g(x_h) = x_{g \bullet h}$.
This implies that $\pi : g \mapsto U_g$ is a unitary representation, since
\[
        \forall g_1, g_2, h \in G, \quad U_{g_1 \bullet g_2} x_h = x_{g_1 \bullet g_2 \bullet h} 
         = U_{g_1}x_{g_2 \bullet h} = U_{g_1} U_{g_2} x_h
\]
and since $\{x_h\}_{h \in G}$ spans $H$. Hence, $\pi$ is a unitary representation of $G$ 
for which
$\pi(g)x_h = x_{g \bullet h}$, i.e., $X$ is a group frame for $H$.
\end{proof}
\end{thm}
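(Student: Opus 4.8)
The plan is to handle the two implications separately, with the forward direction essentially immediate and the converse carrying the real content.

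For the forward implication, suppose $X$ is a group frame. After the indexing convention recorded above we may write $x_g = \pi(g)x_e$, and then, exactly as in Equation \eqref{eq:gframegramian}, unitarity of $\pi(h)$ gives
\[
\inner{x_h}{x_g} = \inner{\pi(h)x_e}{\pi(g)x_e} = \inner{x_e}{\pi(h^{-1}\bullet g)x_e}.
\]
Setting $\nu(g) = \inner{x_e}{\pi(g)x_e}$, the Gramian entry $\inner{x_h}{x_g}$ depends only on $h^{-1}\bullet g$, which is precisely the defining property of a G-matrix.

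For the converse, the key structural observation is that the G-matrix hypothesis is exactly a left-translation invariance of the Gramian. Indeed, if $\inner{x_h}{x_g} = \nu(h^{-1}\bullet g)$, then for each fixed $g \in G$,
\[
\inner{x_{g\bullet h}}{x_{g\bullet k}} = \nu\bigl((g\bullet h)^{-1}\bullet(g\bullet k)\bigr) = \nu(h^{-1}\bullet k) = \inner{x_h}{x_k},
\]
so the left-translated sequence $\{x_{g\bullet h}\}_{h\in G}$ has the same Gram matrix as $X = \{x_h\}_{h\in G}$. Since $X$ spans $H$, I would then invoke Proposition \ref{prop:gram}: for each $g$ there is a unique $U_g \in \Ucal(H)$ with $U_g x_h = x_{g\bullet h}$ for all $h \in G$. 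It remains only to see that $g \mapsto U_g$ is a representation, and this is forced by the indexing, since $U_{g_1}U_{g_2}x_h = U_{g_1}x_{g_2\bullet h} = x_{g_1\bullet g_2\bullet h} = U_{g_1\bullet g_2}x_h$ for all $h$, whence $U_{g_1}U_{g_2} = U_{g_1\bullet g_2}$ (and $U_e = I$) because $\{x_h\}$ spans $H$. Thus $\pi(g) = U_g$ exhibits $X$ as a group frame.

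The step I expect to be the genuine obstacle --- and the reason Proposition \ref{prop:gram} is doing the heavy lifting --- is the passage from the frame vectors to an honest unitary operator. Because $X$ is typically overcomplete, there are nontrivial relations $\sum_h a_h x_h = 0$, so one cannot simply declare $U_g x_h = x_{g\bullet h}$ and extend by linearity without first knowing this is consistent. If Proposition \ref{prop:gram} were unavailable, I would instead build $U_g$ by hand from the canonical dual frame $\widetilde{x}_h = S^{-1}x_h$, setting $U_g(x) = \sum_{h\in G}\inner{x}{\widetilde{x}_h}\,x_{g\bullet h}$ via the frame reconstruction formula; this is manifestly well defined, the translation invariance above yields $\inner{U_g x}{U_g y} = \inner{x}{y}$ (an isometry, hence unitary in finite dimensions), and the identity $U_g x_h = x_{g\bullet h}$ follows by checking that $U_g x_h - x_{g\bullet h}$ is orthogonal to every $x_{g\bullet k}$, a spanning set since $h \mapsto g\bullet h$ permutes $G$ --- again purely through the G-matrix property. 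Either route localizes all the difficulty in the same place: manufacturing an actual unitary operator out of the mere equality of two Gram matrices.
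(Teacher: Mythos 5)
Your proof is correct, and your primary route is genuinely different from the paper's. The paper proves the converse entirely by hand: it defines $U_g(x) = \sum_{h\in G}\inner{x}{\widetilde{x}_h}\,x_{g\bullet h}$ via the canonical dual frame, verifies $\inner{U_g x}{U_g y}=\inner{x}{y}$ directly from the translation invariance $\inner{x_{g\bullet h}}{x_{g\bullet k}}=\inner{x_h}{x_k}$, and then shows $U_g x_h = x_{g\bullet h}$ by the orthogonality computation against the spanning set $\{x_{g\bullet k}\}_{k\in G}$ --- precisely the fallback construction you sketch in your final paragraph. Your main argument instead outsources all of that work to Proposition \ref{prop:gram}: since left translation by $g$ permutes $G$, the sequence $\{x_{g\bullet h}\}_{h\in G}$ is a reindexing of $X$, and the G-matrix hypothesis says it has the same Gram matrix as $X$, so a unitary $U_g$ with $U_g x_h = x_{g\bullet h}$ exists at once; the homomorphism property $U_{g_1}U_{g_2}=U_{g_1\bullet g_2}$ then follows from spanning exactly as in the paper. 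What your route buys is brevity and a clean conceptual reading --- the G-matrix condition is literally the statement that each left translate of the indexed frame is unitarily equivalent, index-consistently, to the frame itself. What the paper's route buys is self-containedness (Proposition \ref{prop:gram} is only cited, not proved, in the paper) and an explicit formula for $U_g$ that the subsequent remark exploits: the $U_g$ are ``essentially frame multiplication on the left by $x_g$,'' which sets up the connection to the frame multiplication results of Section \ref{sec:fmgroupframes}. Both arguments hinge on the same key identity $\inner{x_{g\bullet h}}{x_{g\bullet k}}=\nu(h^{-1}\bullet k)=\inner{x_h}{x_k}$, valid for arbitrary (not necessarily Abelian) finite $G$, and your forward direction coincides with the paper's.
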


\begin{rem}
The operators $U_g, g \in G, $ defined in the proof of 
Theorem \ref{thm:gmatrix} are essentially frame multiplication on the left by $x_g,$
but there 
may not exist a bilinear product on all of $H$ which agrees with or properly joins the sequence 
$\{U_g\}_{g \in G}$. We shall prove in Proposition \ref{prop:unitary} that when these operators 
do arise from a 
frame multiplication defined by $G$, then they are unitary when the Gramian is a G-matrix. 
In fact, we shall see in Section \ref{sec:fmgroupframes} that, if $G$ is an {\it Abelian} group 
and if the Gramian of $X = \{x_g\}_{g \in G}$ is a 
G-matrix, or by Theorem \ref{thm:gmatrix} if $X$ is a group frame, then $G$ defines a 
frame multiplication for $X.$ 
\end{rem}

\begin{example}
If $G$ a cyclic group, a G-matrix is a circulant matrix. To illustrate this, we consider 
$G = \ZZ/4\ZZ = (\{0,1,2,3\}, +)$ with the natural ordering. Then all G-matrices, 
corresponding to this choice of $G,$ are of the form
\[\left(
\begin{array}{cccc}
\nu(0) & \nu(3) & \nu(2) & \nu(1) \\
\nu(1) & \nu(0) & \nu(3) & \nu(2) \\
\nu(2) & \nu(1) & \nu(0) & \nu(3) \\
\nu(3) & \nu(2) & \nu(1) & \nu(0) \\
\end{array}
\right)\]
for some $\nu : G \rightarrow \CC$, 
and this is a $4\times 4$ circulant matrix.
\end{example}

\begin{example}
\label{ex:d3gmatrix}
For a non-circulant example of a G-matrix, let $G = D_3$, the dihedral group of order $6$. If we use the presentation, 
\[
        D_3 = <r, s : r^3 = e, s^2 = e, rs = sr^2>,
\]
and order the elements $e, r, r^2, s, sr, sr^2$, then every G-matrix has the form
\[\bordermatrix{\text{}&e & r & r^2 & s & sr & sr^2 \cr
 e     &\nu(e) & \nu(r^2) & \nu(r) & \nu(s) & \nu(sr) & \nu(sr^2) \cr
 r      &\nu(r) & \nu(e) & \nu(r^2) & \nu(sr) & \nu(sr^2) & \nu(s) \cr
 r^2 &\nu(r^2) & \nu(r) & \nu(e) & \nu(sr^2) & \nu(s) & \nu(sr) \cr
 s     &\nu(s) & \nu(sr) & \nu(sr^2) & \nu(e) & \nu(r^2) & \nu(r) \cr
 sr    &\nu(sr) & \nu(sr^2) & \nu(s) & \nu(r) & \nu(e) & \nu(r^2) \cr
 sr^2&\nu(sr^2) & \nu(s) & \nu(sr) & \nu(r^2) & \nu(r) & \nu(e) }\]
for some $\nu: D_3 \rightarrow \CC$.
\end{example}

\section{Frame multiplication for group frames}
\label{sec:fmgroupframes}


\subsection{Frame multiplication defined by groups}
\label{sec:groupops}

We now deal with the special case of frame multiplications defined by binary operations 
$\bullet: J \times J \rightarrow J$ that are group operations, i.e., when $J = G$ is a group and 
$\bullet$ is the group operation. Recall that if $X = \{x_g\}_{g \in G}$ 
is a frame for a Hilbert space $H$ and the group operation of $G$ is a frame multiplication 
for $X$, then we say that 
$G$ defines a frame multiplication for $X$. 


We state and prove Proposition \ref{prop:unitary} in some generality to
illustrate the basic idea and its breadth. We use it 
to prove Theorem \ref{thm:gframes}.
 
\begin{prop}
\label{prop:unitary}
Let $(G, \bullet)$ be a countable group, and let $H$ be a separable Hilbert space.
Assume $X = \{x_g\}_{g \in G}$ is a tight frame for $H.$ 
If $G$ defines a frame multiplication for $X$ 
with continuous extension $\ast$ to all of $H,$
then the functions $L_g: H \rightarrow H,$ defined by
\[
       L_g(x) = x_g \ast x,
\]
and $R_g: H \rightarrow H,$ defined by
\[
     R_g(x) = x \ast x_g,
\]
are unitary linear operators for every $g \in G$.
\begin{proof}
Let $x \in H$, $g \in G$, and $A$ be the frame constant for $X$. Linearity and continuity of $L_g$ follow  
from the bilinearity and continuity of $\ast$. To show that $L_g$ is unitary, we first compute
\[
A \norm{L_g^\ast(x)}^2 = \sum_{h \in G}\left|\inner{L_g^\ast(x)}{x_h}\right|^2
=\sum_{h \in G}\left|\inner{x}{L_g(x_h)}\right|^2
\]
\[
=\sum_{h \in G}\left|\inner{x}{x_g \ast x_h}\right|^2 
=\sum_{h \in G}\left|\inner{x}{x_{gh}}\right|^2 
=\sum_{h \in G}\left|\inner{x}{x_h}\right|^2 
=A \norm{x}^2.
\]
Therefore, $L_g^\ast$ is an isometry. If $H$ is finite dimensional, this is equivalent to $L_g^\ast$ 
and $L_g$ being 
unitary. 

For the infinite dimensional case, we also need that $L_g$ is an isometry, this being one of 
the equivalent characterizations of unitary operators. 

To prove that $L_g$ is an isometry, we 
first show it is invertible and that
$L_g^{-1} = L_{g^{-1}}$. To this end, we begin by defining
\[
       D = \left\{\sum_h a_h x_h : |\{a_h: a_h \neq 0\}| < \infty\right\},
\] i.e., $D$ is 
the set of all finite linear combinations of 
frame elements from $X$. It follows from the frame reconstruction formula that $D$ is dense in $H$. 
Now, for any $g \in G$, $L_g$ maps $D$ onto $D$, and for every $x = \sum_{h \in G} a_h x_h \in D,$ 
we compute
\[
         L_{g^{-1}} L_g (x) = L_{g^{-1}} L_g \left( \sum_{h \in G}a_h x_h \right) 
\]
\[
         = L_{g^{-1}} \left( \sum_{h \in G}a_h x_{g \bullet h} \right)\\
         = \sum_{h \in G}a_h x_h = x.
\]
In short, $L_{g^{-1}}L_g$ is linear, bounded, and is the identity on a dense subspace of $H$. Therefore,
$L_{g^{-1}}L_g$ 
is the identity on all of $H$.

We can now verify that $L_g$ is an isometry. From general operator theory, we have the equalities
\[
      \norm{L_g^{-1}}_{op} = \norm{L_{g^{-1}}}_{op} = \norm{L_{g^{-1}}^\ast}_{op} = 1,
\]
and
\[
\norm{L_g}_{op} = \norm{L_g^\ast}_{op} = 1.
\]
Invoking these and the definition of the operator norm, we obtain
\[
      \norm{L_g (x)} \leq \norm{x} \quad \text{and} \quad \norm{x} = 
        \norm{L_g^{-1} L_g (x)} \leq \norm{L_g^{-1}}_{op}\norm{L_g (x)} = \norm{L_g (x)}.
\]
Therefore, $\norm{L_g(x)} = \norm{x},$ the desired isometry. 

The same calculations
prove that $R_g$ is unitary.
\end{proof}
\end{prop}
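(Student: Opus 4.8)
The plan is to show that each $L_g$ is a surjective isometry, since a surjective isometry on a Hilbert space is precisely a unitary operator; the argument for $R_g$ will be word-for-word the same after replacing left translation by right translation. Linearity and continuity of $L_g$ come for free: $L_g(x) = x_g \ast x$ is linear in the second slot and bounded because, by hypothesis, $\ast$ is bilinear and continuous.

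The first substantive step I would take is to identify the adjoint on the frame and exploit that left translation $h \mapsto g \bullet h$ is a bijection of $G$. Writing $A$ for the frame constant of the tight frame $X$, this yields
\[
A\norm{L_g^\ast x}^2 = \sum_{h \in G}\left|\inner{L_g^\ast x}{x_h}\right|^2 = \sum_{h \in G}\left|\inner{x}{x_g \ast x_h}\right|^2 = \sum_{h \in G}\left|\inner{x}{x_{g \bullet h}}\right|^2 = \sum_{h \in G}\left|\inner{x}{x_h}\right|^2 = A\norm{x}^2,
\]
so that $L_g^\ast$ is an isometry. In the finite-dimensional case this already finishes the proof, because an isometry of a finite-dimensional space is injective and therefore bijective, hence $L_g^\ast$, and with it $L_g$, is unitary.

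The genuinely infinite-dimensional obstacle, which I expect to be the crux, is that an isometry need not be surjective (the unilateral shift is the standard cautionary example), so knowing only that $L_g^\ast$ is an isometry is not enough. To overcome this I would exhibit an explicit two-sided inverse. Let $D$ be the dense subspace of finite linear combinations of the $x_h$, dense by the frame reconstruction formula. On $D$ the rule $x_g \ast x_h = x_{g \bullet h}$ together with bilinearity gives $L_g\!\left(\sum_h a_h x_h\right) = \sum_h a_h x_{g \bullet h}$, and a direct computation using associativity in $G$ then shows $L_{g^{-1}} L_g = I$ and $L_g L_{g^{-1}} = I$ on $D$. Since all the operators involved are bounded, these identities extend from $D$ to all of $H$, so $L_g$ is invertible with $L_g^{-1} = L_{g^{-1}}$.

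Finally I would combine the two pieces of norm information. Running the isometry computation above with $g^{-1}$ in place of $g$ shows that $L_{g^{-1}}^\ast$, and hence $L_{g^{-1}} = L_g^{-1}$, has operator norm $1$; likewise $\norm{L_g}_{op} = \norm{L_g^\ast}_{op} = 1$. For arbitrary $x \in H$ this gives $\norm{L_g x} \le \norm{x}$ together with $\norm{x} = \norm{L_g^{-1} L_g x} \le \norm{L_g^{-1}}_{op}\norm{L_g x} = \norm{L_g x}$, so $\norm{L_g x} = \norm{x}$. Thus $L_g$ is an isometry, and being invertible it is a surjective isometry, i.e.\ unitary. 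The only places the group structure is essential are the bijectivity of translation (used in the isometry step) and associativity (used to build the inverse), so repeating the argument verbatim with $R_g$ and the right translation $h \mapsto h \bullet g$ completes the proof.
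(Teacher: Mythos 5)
Your proposal is correct and follows essentially the same route as the paper's own proof: the same adjoint isometry computation from the tight-frame identity, the same finite-dimensional shortcut, the same dense subspace $D$ of finite linear combinations with $L_{g^{-1}}$ as the inverse, and the same operator-norm argument to upgrade $L_g$ to an isometry. The only cosmetic difference is that you explicitly verify the inverse on both sides, which the paper leaves implicit by symmetry in $g$.
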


In contrast to the generality of Proposition \ref{prop:unitary}, we 
next give a specific example providing direction that led to our main results in 
Subsection \ref{sec:abelfm}.

\begin{example}
 Let $X = \{x_k\}_{k=0}^{N-1} \subseteq \CC^d$ be a linearly 
dependent frame for $\CC^d,$ and so $N > d.$ Suppose 
$\ast : \CC^d \times \CC^d \rightarrow \CC^d$ is a bilinear 
product such that $x_m \ast x_n = x_{m+n}$, i.e., $\ZZ/N\ZZ$ defines a 
frame multiplication for $X$. 
By linear dependence, there exists a sequence $\{a_k\}_{k=0}^{N-1} \subseteq \CC$ 
of coefficients, not all zero, 
such that
\[\sum_{k=0}^{N-1} a_k x_k = 0.\]
Multiplying on the left by $x_m$ and utilizing the aforementioned properties of $\ast$ yield
\begin{equation}
\label{eq:gframeex}
     \forall m \in \ZZ/N\ZZ, \quad    0 = x_m \ast \left(\sum_{k=0}^{N-1} a_k x_k\right) = 
     \sum_{k=0}^{N-1} a_k \left(x_m \ast x_k \right)= 
         \sum_{k=0}^{N-1} a_k x_{m+k}.
\end{equation}
It is convenient to rewrite \eqref{eq:gframeex} with the index on the coefficients varying with $m:$
\begin{equation}
\label{eq:gframeex2}
           \forall m \in \ZZ/N\ZZ, \quad \sum_{k=0}^{N-1} a_{k-m} x_k = 0. 
\end{equation}
Let $\mathbf{a} = 
(a_k)_{k=0}^{N-1}$, let $A$ be the $N \times N$ circulant matrix generated by the vector $\mathbf{a}$
and with eigenvalues $\lambda_j, j = 0, \ldots, N-1, $
and let $X$ be the $N \times d$ matrix with vectors $x_k$ as its rows. In symbols,
\[
A = \left(
\begin{array}{ccccc}
a_0 & a_1 & a_2 & \ldots & a_{N-1} \\
a_{N-1} & a_0 & a_1 & \ldots & a_{N-2} \\
\vdots & \vdots  & \vdots & \ddots  & \vdots \\
a_1 & a_2 & a_3 & \ldots & a_0 \\
\end{array}
\right)
\quad \mbox{and} \quad 
X = \left(
\begin{array}{c}
x_0 \\
x_1 \\
\vdots \\
x_{N-1} \\
\end{array}
\right).
\]
 In matrix form, Equation \eqref{eq:gframeex2} is 
\[
       AX = 0.
\]
Thus, the columns of $X$ are in the nullspace of the circulant matrix $A.$
A consequence of this and of the fact that the {\it DFT} diagonalizes circulant matrices 
is that the columns of $X$ 
are linear combinations of some subset of at least $d$ (the rank of $X$ is $d$) 
columns of the {\it DFT} matrix.
Further, 
if $\omega_j = e^{2\pi i j/N}$, then 
\[
     a_0 + a_{N-1} \omega_j + a_{N-2} \omega_j^2 + \ldots + a_1 \omega_j^{N-1} = \lambda_j,
\] 
is zero for at least $d$ choices of $j \in \{0, 1, \ldots, N-1\}.$ Hence, assuming 
that $\ZZ/N\ZZ$ defines a frame multiplication for a frame $X$ for $\CC^d,$  
we obtain a 
 condition involving the {\it DFT}. 
\end{example}

\subsection{Abelian frame multiplications}
\label{sec:abelfm}

\begin{thm}[Abelian frame multiplications for group frames]
\label{thm:gframes}
Let $(G, \bullet)$ be a finite Abelian group, and let $H$ be a $d$-dimensional 
Hilbert space. Assume that $X = \{x_g\}_{g \in G}$ is a tight frame for $H$. 
$G$ defines a frame multiplication for $X$ if and only if $X$ is a group frame.

\begin{proof}
{\it i.} Suppose $G$ defines a frame multiplication for $X$ and the bilinear product given on 
$H$ is $\ast$. For each $g \in G$ define an operator $U_g: H \rightarrow H$ by the formula
\[
U_g(x) = x_g \ast x.
\]
By Proposition \ref{prop:unitary}, $\{U_g\}_{g \in G}$ is a family of unitary operators on $H$. 
Define the mapping $\pi:g \mapsto U_g$. $\pi$ is a unitary representation of $G$ because 
\[
       U_g U_h x_k = U_g (x_h \ast x_k) 
       = U_g(x_{h \bullet k}) = x_g \ast x_{h \bullet k} = x_{g \bullet h \bullet k} 
       = U_{g \bullet h}x_k,
\]
and since $\{x_k\}_{k \in G}$ spans $H.$
Further, we have $\pi(g)x_h = x_{g \bullet h},$ thereby proving $X$ is a group frame.

{\it ii.} Conversely, suppose $X = \{x_g\}_{g\in G}$ is a group frame. Then, there exists a unitary 
representation $\pi$ of $G$ such that $\pi(g)x_h = x_{g \bullet h}$. It follows from the facts,
$\pi(g)$ is unitary and $G$ is Abelian, that
\begin{equation}
\label{eq:fmcond}
       \forall g, h_1,h_2 \in G, \quad \inner{x_{h_1}}{x_{h_2}} =  
       \inner{\pi(g)x_{h_1}}{\pi(g)x_{h_2}} =  
       \inner{x_{g \bullet h_1}}{x_{g \bullet h_2}} =  \inner{x_{h_1 \bullet g}}{x_{h_2 \bullet g}}.
\end{equation}

{\it iii.} If $\sum_{g \in G} a_g x_g = 0$, then for any $j, k \in G$ we have
\begin{align*}
0 &= \inner{\sum_{g \in G} a_g x_g}{x_j}
= \sum_{g \in G}a_g \inner{x_g}{x_j}\\
&= \sum_{g \in G} a_g \inner{x_{g \bullet k}}{x_{j\bullet k}}
= \inner{\sum_{g \in G} a_g x_{g \bullet k}}{x_{j\bullet k}}.
\end{align*}
Allowing $j$ to vary over all of $G$ shows that $\sum_{g \in G}a_g x_{g \bullet k} = 0$. 
Similarly, we can use the fact that $\inner{x_g}{x_j} = \inner{x_{k\bullet g}}{x_{k\bullet j}}$ to show 
$\sum_{g \in G}a_g x_{k\bullet g} = 0$. Hence, by Proposition \ref{prop:fmexistence}, $\bullet$ is a frame 
multiplication for $X$.
\end{proof}
\end{thm}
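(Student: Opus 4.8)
The plan is to establish the two implications separately, using Proposition~\ref{prop:unitary} for the forward direction and the cancellation criterion of Proposition~\ref{prop:fmexistence} for the converse.

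For the forward direction, I would assume $G$ defines a frame multiplication for $X$ and let $\ast$ denote the induced bilinear product on $H$. Because $H$ is finite dimensional, $\ast$ is automatically continuous, so Proposition~\ref{prop:unitary} applies and each left-multiplication operator $U_g(x) = x_g \ast x$ is unitary. I would then set $\pi(g) = U_g$ and verify that $\pi$ is a unitary representation by testing on the spanning set $\{x_k\}_{k \in G}$: since
\[
    U_g U_h x_k = x_g \ast (x_h \ast x_k) = x_g \ast x_{h \bullet k} = x_{g \bullet h \bullet k} = U_{g \bullet h} x_k,
\]
and the $x_k$ span $H$, we obtain $\pi(g)\pi(h) = \pi(g \bullet h)$. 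As $\pi(g) x_h = x_{g \bullet h}$ holds by construction, $X$ is a group frame in the sense of Definition~\ref{def:groupframe2}. I note that this direction uses only associativity, so it goes through for arbitrary (not necessarily Abelian) $G$.

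For the converse, I would start from a unitary representation $\pi$ with $\pi(g) x_h = x_{g \bullet h}$ and exploit the invariance of the Gram data under the group action, which is exactly the G-matrix structure from Theorem~\ref{thm:gmatrix}. Unitarity of $\pi(k)$ gives
\[
    \inner{x_g}{x_j} = \inner{\pi(k) x_g}{\pi(k) x_j} = \inner{x_{k \bullet g}}{x_{k \bullet j}},
\]
and commutativity upgrades this to $\inner{x_g}{x_j} = \inner{x_{g \bullet k}}{x_{j \bullet k}}$. To check the hypothesis of Proposition~\ref{prop:fmexistence}, I would assume $\sum_{g} a_g x_g = 0$, fix $k$, and compute for each $j$ that $\inner{\sum_g a_g x_{g \bullet k}}{x_{j \bullet k}} = \sum_g a_g \inner{x_g}{x_j} = \inner{\sum_g a_g x_g}{x_j} = 0$. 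Letting $j$ range over $G$, so that $j \bullet k$ exhausts the index set, forces $\sum_g a_g x_{g \bullet k}$ to be orthogonal to every frame element, hence zero; the left-shifted identity $\sum_g a_g x_{k \bullet g} = 0$ follows identically. Proposition~\ref{prop:fmexistence} then delivers the frame multiplication.

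The step I expect to be the crux is producing \emph{both} cancellation conditions of Proposition~\ref{prop:fmexistence} in the converse. A single unitary representation supplies left-invariance of the inner products for free, but a genuinely bilinear frame multiplication must be compatible with index shifts on either side; obtaining the second identity is precisely where the Abelian hypothesis is indispensable, and it explains why the theorem is confined to commutative $G$, with the non-Abelian case deferred to the separate treatment mentioned earlier.
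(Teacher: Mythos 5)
Your proposal is correct and follows essentially the same route as the paper's proof: the forward direction via Proposition \ref{prop:unitary} and verification of the representation property on the spanning set $\{x_k\}_{k \in G}$, and the converse via unitarity plus commutativity to get two-sided invariance of the Gram data, feeding into the cancellation criterion of Proposition \ref{prop:fmexistence}. Your added observations — that continuity of $\ast$ is automatic in finite dimensions, and that only the converse genuinely needs the Abelian hypothesis — are accurate refinements of the same argument, not a different one.
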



If $(G, \bullet)$ is a finite Abelian group and $H = \CC^d,$ 
then we can describe the form of frame multiplications 
defined by $G$ in the following way.

\begin{thm}[Abelian frame multiplications for harmonic frames]
\label{thm:harmonicframemultiplications}
Let $(G, \bullet)$ be a finite Abelian group.
Assume that $X = \{x_g\}_{g \in G}$ is a tight frame for 
$\CC^d.$
If $G$ defines a frame multiplication for $X$, then $X$ is unitarily equivalent to a harmonic frame, 
and there exists $U \in  \mathcal{U}(\CC^d)$ and $c > 0$ such that
\begin{equation}\label{eq:harmonicframemultiplication}
cU\left(x_g \ast x_h\right) = cU\left(x_g\right)cU\left(x_h\right),
\end{equation}
where the product on the right is vector pointwise multiplication and $\ast$ is the frame multiplication 
defined by $(G, \bullet),$ i.e., $x_g \ast x_h = x_{g \bullet h}$.
\end{thm}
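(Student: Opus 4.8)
The plan is to run everything through Theorem \ref{thm:gframes}, which already converts the frame-multiplication hypothesis into the statement that $X$ is a group frame, and then to diagonalize the resulting representation using that $G$ is Abelian; the content that makes this a \emph{harmonic} frame rather than a generic diagonal group frame will be squeezed out of the tightness hypothesis.

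First I would invoke Theorem \ref{thm:gframes}: since $(G,\bullet)$ is Abelian and $G$ defines a frame multiplication for the tight frame $X$, the frame $X$ is a group frame. Hence there is a unitary representation $\pi\colon G\to\Ucal(\CC^d)$ with $\pi(g)x_h=x_{g\bullet h}$, and in particular $x_g=\pi(g)x_e$ for the identity $e\in G$. Because $G$ is Abelian, the family $\{\pi(g)\}_{g\in G}$ is a commuting family of unitary, hence normal, operators, so it is simultaneously unitarily diagonalizable: there exists $U\in\Ucal(\CC^d)$ with
\[
U\pi(g)U^{\ast}=\mathrm{diag}\!\left(\gamma_1(g),\dots,\gamma_d(g)\right)\qquad\text{for all }g\in G.
\]
Since $\pi$ is a homomorphism into the unitaries and $U$ is fixed across $g$, each diagonal function satisfies $\gamma_i(g\bullet h)=\gamma_i(g)\gamma_i(h)$, $\gamma_i(e)=1$, and $|\gamma_i(g)|=1$; that is, each $\gamma_i$ is a character of $G$. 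Writing $y=Ux_e$, I then read off the coordinate formula $(Ux_g)_i=\gamma_i(g)\,y_i$.

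Next I would extract the structural constraints from tightness. Let $A$ be the frame constant and $N=|G|$. Transporting the tight-frame identity $\sum_{g}x_gx_g^{\ast}=A\,I$ by $U$ and computing entrywise, the $(i,j)$ entry of $\sum_g (Ux_g)(Ux_g)^{\ast}$ equals $y_i\overline{y_j}\sum_{g\in G}\gamma_i(g)\overline{\gamma_j(g)}$. By the orthogonality relations for the characters of $G$, this inner sum equals $N$ when $\gamma_i=\gamma_j$ and $0$ otherwise. Matching against $A\,\delta_{ij}$ forces two conclusions: the diagonal entries give $|y_i|^2=A/N$ for every $i$, so the moduli $|y_i|$ are equal and nonzero; and the off-diagonal entries force the characters $\gamma_1,\dots,\gamma_d$ to be pairwise distinct, since otherwise a nonzero term $N\,y_i\overline{y_j}$ would survive off the diagonal. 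Writing $y_i=\sqrt{A/N}\,u_i$ with $|u_i|=1$ and letting $W=\mathrm{diag}(\overline{u_1},\dots,\overline{u_d})\in\Ucal(\CC^d)$, I obtain
\[
(WU)x_g=\sqrt{A/N}\,\bigl(\gamma_1(g),\dots,\gamma_d(g)\bigr).
\]
Because the $\gamma_i$ are \emph{distinct} characters of the Abelian group $G$, the vectors on the right are precisely the defining vectors of a harmonic frame, so $x_g=\sqrt{A/N}\,(WU)^{\ast}\bigl(\gamma_1(g),\dots,\gamma_d(g)\bigr)$ exhibits $X$ as unitarily equivalent to a harmonic frame.

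Finally I would read off the multiplication identity from this normalization. Put $U_0=WU$ and $c=\sqrt{N/A}>0$, so that $v_g:=cU_0x_g=\bigl(\gamma_1(g),\dots,\gamma_d(g)\bigr)$. Since each $\gamma_i$ is a homomorphism, $\gamma_i(g\bullet h)=\gamma_i(g)\gamma_i(h)$, whence $v_{g\bullet h}=v_g\,v_h$ in the pointwise product. Because $x_g\ast x_h=x_{g\bullet h}$ by the definition of the frame multiplication, this gives
\[
cU_0(x_g\ast x_h)=cU_0x_{g\bullet h}=v_{g\bullet h}=v_gv_h=(cU_0x_g)(cU_0x_h),
\]
which is exactly \eqref{eq:harmonicframemultiplication}. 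The single step carrying the real content is the tightness computation: it is what forces the coordinate characters to be distinct and of equal modulus, promoting the abstract diagonalization of $\pi$ to a genuine harmonic frame. By contrast, the simultaneous diagonalization and the homomorphism bookkeeping that delivers pointwise multiplication are routine once that constraint is secured.
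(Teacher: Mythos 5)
Your proof is correct and follows essentially the same route as the paper's: invoke Theorem \ref{thm:gframes} to get the unitary representation, simultaneously diagonalize the commuting unitaries, use tightness to force the components of $y = Ux_e$ to have equal (nonzero) modulus, renormalize by a diagonal unitary, and read off the character structure and the pointwise-multiplication identity. The only difference is cosmetic --- you phrase the tightness step via $\sum_{g} x_g x_g^\ast = A\,I$ and character orthogonality rather than via orthogonality of the rows of the synthesis matrix, which has the small bonus of making explicit that the coordinate characters are pairwise distinct (a point the paper's definition of harmonic frame requires, via the subset $K$ of columns, but which its proof leaves implicit).
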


\begin{proof}
{\it i.} For each $g \in G$ define an operator $U_g: \CC^d \rightarrow \CC^d$ by the formula
\[
      U_g(x) = x_g \ast x.
\]
By Theorem \ref{thm:gframes}, $\{U_g\}_{g \in G}$ is an Abelian group of unitary operators,
that generates
\[
        X = \{U_g(x_e): g \in G\},
\]
where $e$ is the unit of $G.$
Furthermore, since the $U_g$ are unitary, we have
\[
        \forall g \in G, \quad \norm{x_e}_2 = \norm{U_g(x_e)}_2 = \norm{x_g}_2,
\]
which shows that $X$ is equal-norm. 

{\it ii.} For the next step we use a technique found in 
the proof of Theorem 5.4 of \cite{ValWal2005}. A commuting family of diagonalizable operators, 
such as $\{U_g\}_{g \in G}$, is 
simultaneously diagonalizable, i.e., there is a unitary operator $V$ for which
\[
        \forall g \in G, \quad D_g = VU_gV^\ast
\]
is a diagonal matrix, see \cite{HofKun1971} Theorem 6.5.8, 
cf. \cite{HorJoh1990} Theorem 2.5.5.

This is a also consequence of Schur's lemma 
and Maschke's theorem, see Appendix \ref{sec:unirep}.
Since $\{U_g\}_{g \in G}$ is an Abelian group of operators, all the 
invariant subspaces are one dimensional, and so, orthogonally decomposing $\CC^d$ 
into the invariant subspaces of $\{U_g\}_{g \in G},$ simultaneously diagonalizes the operators 
$U_g$. The operators $D_g$ are unitary, and consequently, they have diagonal entries of 
modulus $1$. 

{\it iii.} Define a new frame, $Y,$ generated by the diagonal operators $D_g$,  as
\[
        Y = \{D_gy : g \in G\}, \; \text{where} \; y = V(x_e).
\]
Since $V^\ast D_gV = U_g$, we have
\[
      X = \{U_g(x_e): g \in G\} = V^\ast Y,
\]
or
\[
          VX = Y.
\]
Let $(D_gy)_j$ be the $j$-th component of the vector $D_gy$. Form the $d \times |G|$ matrix 
with columns the elements of $Y$, i.e., if we write $G =\{g_1, \ldots, g_N\}$, then we form
\begin{equation}
\label{eq:psimatrix}
\left(
\begin{array}{ccc}
(D_{g_1}y)_0 & \ldots & (D_{g_N}y)_0\\
(D_{g_1}y)_1 & \ldots & (D_{g_N}y)_1\\
\vdots & \ddots & \vdots\\
(D_{g_1}y)_{d-1} & \ldots & (D_{g_N}y)_{d-1}\\
\end{array}
\right).
\end{equation}
Since $Y$ is the image of $X$ under $V$, it is an equal-norm tight frame, 
and the synthesis operator matrix \eqref{eq:psimatrix} 
has orthogonal rows of equal length. We compute the 
norm of row $j$ to be
\[
      \left(\sum_{g} |(D_g)(y)_j|^2\right)^{1/2} = \sqrt{|G|}|(y)_j|.
\]
Therefore, the components of $y$ have equal modulus, and, so,  If we let $W^\ast$ be the 
diagonal matrix with the entries of $y$ on its diagonal, then there exists $c > 0$ such that 
$cW^\ast$ is a unitary matrix. 

{\it iv.} Now, we have
\[
         X = \frac{1}{c} U^\ast \{D_g\mbbu : g \in G\}, \quad \text{where } \mbbu = 
         (1, 1, \ldots, 1)^t \text{ and } U^\ast = cV^\ast W^\ast \text{ is unitary}.
\]
It is important to note that we have more than just the equality of sets of vectors as stated above.
In fact, the $g$'s on both sides coincide under the transformation, i.e.,
\begin{align*}
         \frac{1}{c} U^\ast (D_g\mbbu) &= V^\ast W^\ast D_g (\mbbu)= V^\ast D_g (y)\\
             &= U_g V^\ast (y) = U_g (x_e) = x_g.
\end{align*}

Thus, we have found a unitary operator $U$ and $c > 0$ such that $cUx_g = D_g\mbbu$. 

{\it v.} It remains to show that $\{D_g\mbbu : g \in G\}$ is a harmonic frame and that the product $\ast$ 
behaves as claimed. Proving $\{D_g\mbbu : g \in G\}$ is harmonic amounts to showing, for 
$j = 0, 1, \ldots, d-1$, that the mapping,
\[
       \gamma_j: G \rightarrow \CC,
\]
defined by
\[
           \gamma_j(g) = (D_g\mbbu)_j = (D_g)_{jj}
\]
is a character of the group $G$. This follows since
\[
       \forall j = 0,\ldots,d-1, \quad \gamma_j(gh) = (D_{gh})_{jj} = (D_g)_{jj}(D_h)_{jj} = 
       \gamma_j(g)\gamma_j(h).
\]
and $|(D_g)_{jj}| = 1.$

Finally, because $cU(x_g) = D_g\mbbu,$ we can compute
\[
cU(x_g \ast x_h) = cU(x_{gh})
\]
\[
= D_{gh}\mbbu\\
= (D_g\mbbu)(D_h\mbbu)\\
= cU(x_g) cU(x_h).
\qedhere
\]
\end{proof}


\begin{rem}

Strictly speaking, we could have canceled $c$ from both sides of 
Equation (\ref{eq:harmonicframemultiplication}). 
We left them in place because, as we saw in the proof, $cU$ maps the tight frame $X$ to a 
harmonic frame. Therefore, it is made clearer what \eqref{eq:harmonicframemultiplication} means 
when each $c$ is in place, i.e., performing the frame multiplication defined by $G$ and then 
mapping to the harmonic frame is the same as first mapping to the harmonic frame
and then multiplying pointwise.
\end{rem}

In much of our discussion motivating this material, we assumed there was a bilinear product on $\CC^d$ and a 
frame $X$ such that $x_{m} \ast x_{n} = x_{m+n}$, i.e., our underlying group was $\ZZ/N\ZZ$. 
By strengthening our assumptions on $X$ to be a tight frame, we can apply Theorem \ref{thm:gframes} 
to show that $X$ is a group frame for the Abelian group $\ZZ/N\ZZ$, and furthermore, by 
Theorem \ref{thm:harmonicframemultiplications}, $X$ is unitarily equivalent to a {\it DFT} frame, i.e., a 
harmonic frame with $G = \ZZ/N\ZZ.$ Therefore, we have the following corollary.

\begin{cor}
Let $X = \{x_n\}_{n \in \ZZ/N\ZZ} \subseteq \CC^d$ be a tight frame for $\CC^d.$ If $\ZZ/N\ZZ$ 
defines a frame 
multiplication for $X$, then $X$ is unitarily equivalent to a {\it DFT} frame.
\end{cor}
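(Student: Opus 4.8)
The plan is to obtain the corollary as the cyclic-group instance of the two preceding theorems, combined with the elementary identification of harmonic frames over $\ZZ/N\ZZ$ with DFT frames. First I would note that $\ZZ/N\ZZ$ is a finite Abelian group and that $H=\CC^d$ is a $d$-dimensional Hilbert space, so the hypotheses of Theorem \ref{thm:harmonicframemultiplications} are met verbatim: $X$ is a tight frame for $\CC^d$ and $\ZZ/N\ZZ$ defines a frame multiplication for it. Applying that theorem directly produces $U\in\Ucal(\CC^d)$ and $c>0$ such that $cU$ carries $X$ onto a harmonic frame for the group $G=\ZZ/N\ZZ$; indeed, the proof of Theorem \ref{thm:harmonicframemultiplications} exhibits the harmonic frame explicitly as $\{D_g\mbbu\}_{g\in G}$ with $cUx_g=D_g\mbbu$. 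Thus $X$ is unitarily equivalent to a harmonic frame whose underlying group is $\ZZ/N\ZZ$.

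The second, and essentially only computational, step is to verify that a harmonic frame with underlying group $\ZZ/N\ZZ$ is a DFT frame in the sense of Definition \ref{defn:dftframe}. Here I would simply unwind the character table: for $\ZZ/N\ZZ$ the characters are $\gamma_k:m\mapsto e^{2\pi i mk/N}$, so the character table is precisely the DFT matrix $D_N$ of \eqref{eq:dftmatrix}. Selecting the subset $K=\{k_1,\dots,k_d\}$ of $d$ columns amounts to choosing an injective map $s:\ZZ/d\ZZ\to\ZZ/N\ZZ$ with image $K$, and with $U=I$ the resulting frame vectors $(e^{2\pi i mk_1/N},\dots,e^{2\pi i mk_d/N})$, $m=0,\dots,N-1$, are exactly the rows listed in Definition \ref{defn:dftframe}. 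Hence the harmonic frame $\{D_g\mbbu\}_{g\in G}$ is literally a DFT frame.

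Finally I would combine the two steps using transitivity of unitary equivalence: composing the leading unitary $U$ from Theorem \ref{thm:harmonicframemultiplications} with the DFT frame keeps us inside the same unitary-equivalence class, and unitary equivalence of tight frames is an equivalence relation, so $X$ is unitarily equivalent to the DFT frame determined by $s$. I do not expect a genuine obstacle, since all the substance is already carried by Theorem \ref{thm:gframes} (through Theorem \ref{thm:harmonicframemultiplications}); the single point demanding care is the bookkeeping that a cyclic harmonic frame and a DFT frame coincide once $K$ is encoded by the injective map $s$ and the leading unitary $U$ is absorbed into the equivalence. One subtlety worth flagging is that Definition \ref{defn:dftframe} requires only injectivity of $s$, not $(s(n),N)=1$, so every selection of $d$ distinct characters yields a valid DFT frame and no invertibility of the associated vector-valued DFT is needed for the conclusion.
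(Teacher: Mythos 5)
Your proposal is correct and takes essentially the same route as the paper: the paper obtains the corollary by applying Theorem \ref{thm:gframes} and then Theorem \ref{thm:harmonicframemultiplications}, treating ``{\it DFT} frame'' as synonymous with ``harmonic frame with $G = \ZZ/N\ZZ$,'' exactly as you do. Your explicit unwinding of the character table of $\ZZ/N\ZZ$ into the injective map $s$, and your remark that only injectivity of $s$ (not the coprimality condition $(s(n),N)=1$) is needed, merely spell out bookkeeping that the paper leaves as definitional.
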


\begin{example}
Consider the group $\ZZ/4\ZZ$, and let 
\[
     X = \left\{x_{0} = \left(\begin{array}{c}1+i\\1-i\end{array}\right), x_{1} = 
     \left(\begin{array}{c}0\\2\end{array}\right), x_{2} = 
     \left(\begin{array}{c}1-i\\1+i\end{array}\right), x_{3} = 
     \left(\begin{array}{c}2\\0\end{array}\right)\right\}.
\]
$X = \{x_g\}_{g \in \ZZ/4\ZZ}$ is a tight frame for $\CC^2$, and the Gramian of $X$ is
\[
G = \left(\begin{array}{cccc}
4 & 2+2i & 0 & 2-2i\\
2-2i & 4 & 2+2i & 0\\
0 & 2-2i & 4 & 2+2i\\
2+2i & 0 & 2-2i & 4\\
\end{array}
\right).
\]
It is straightforward to check that $G$ is a G-matrix for $\ZZ/4\ZZ$, and therefore, 
by Theorems \ref{thm:gmatrix} 
and \ref{thm:gframes}, $\ZZ/4\ZZ$ defines a frame multiplication for $X$. 
Hence, by Theorem \ref{thm:harmonicframemultiplications}, there exists a 
unitary matrix $U$ and positive constant $c$ 
such that $cUX$ is a harmonic frame. Indeed, if we let
\[
      c = \frac{1}{\sqrt{2}}, \quad U = \frac{1}{\sqrt{2}}\left(\begin{array}{cc}1 & 1\\-i & i\end{array}\right),
\]
then
\[
         Y = cUX = \left\{y_{\overline{0}} = \left(\begin{array}{c}1\\1\end{array}\right), y_{\overline{1}} = 
\left(\begin{array}{c}1\\i\end{array}\right), y_{\overline{2}} = \left(\begin{array}{c}1\\-1\end{array}\right), 
y_{\overline{3}} = \left(\begin{array}{c}1\\-i\end{array}\right)\right\}
\]
is a harmonic frame, and
\[
        \forall g,h \in \ZZ/4\ZZ, \quad cU(x_{gh}) = cU(x_g)cU(x_h).
\]
\end{example}


\section{Uncertainty Principles}
\label{sec:uncertaintyprinciples}


\subsection{Background}

Uncertainty principle inequalities abound in harmonic analysis, e.g., see \cite{robe1929}, 
\cite{debr1967}, \cite{fari1978}, \cite{FefPho1981}, \cite{CowPri1983}, \cite{feff1983}, 
\cite{bour1988}, \cite{DonSta1989}, \cite{stri1989}, \cite{DemCovTho1991}, \cite{bene1994}, 
\cite{HavJor1994}, \cite{cohe1995}, \cite{FolSit1997}, \cite{groc2001}, \cite{BenDel2016}.
The classical Heisenberg uncertainty principle is 
deeply rooted in quantum mechanics , see \cite{heis1927}, \cite{weyl1950}, \cite{vonn1955}, \cite{gabo1946}. 
The classical mathematical uncertainty principle inequality was first stated and proved in the setting of 
$L^2(\mathbb R)$ 
in 1924 by Norbert Wiener at a Gottingen seminar \cite{barn1970}, 
also see \cite{kenn1927}. This is Theorem \ref{thm:heisenberg}. 

\begin{thm}[Heisenberg uncertainty principle inequality]
\label{thm:heisenberg}
If $f \in L^2(\RR)$ and $x_0,\gamma_0 \in \mathbb{R}$, then
\begin{equation}
\label{eq:hup}
        \norm{f}_2^2 \leq 4 \pi         
         \left(\int (x-x_0)^2 |f(x)|^2 \,dx\right)^{1/2}
         \left(\int (\gamma - \gamma_0)^2|\widehat{f}(\gamma)|^2\,d\gamma  \right)^{1/2},
\end{equation}
and there is equality if and only if 
\[
 f(x)= C e^{2\pi i x \gamma_0 } e^{-s(x-x_0)^2},
\]
for some $C \in \mathbb{C}$ and $s>0$. 
\end{thm}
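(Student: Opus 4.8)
The plan is to prove the centered inequality (the case $x_0=\gamma_0=0$) first and then recover the general statement by a translation-modulation reduction. Setting $g(x)=e^{-2\pi i \gamma_0 x}f(x+x_0)$, the standard covariance of the Fourier transform under translation and modulation gives $\norm{g}_2=\norm{f}_2$, together with $\int x^2|g(x)|^2\,dx=\int (x-x_0)^2|f(x)|^2\,dx$ and $\int \gamma^2|\widehat{g}(\gamma)|^2\,d\gamma=\int (\gamma-\gamma_0)^2|\widehat{f}(\gamma)|^2\,d\gamma$, so it suffices to treat $x_0=\gamma_0=0$. I may assume both right-hand integrals are finite, since otherwise the inequality is trivial; in particular $xf\in L^2(\RR)$, and because $\widehat{f'}(\gamma)=2\pi i\gamma\widehat{f}(\gamma)$, also $f'\in L^2(\RR)$ with $\int|f'|^2=4\pi^2\int\gamma^2|\widehat{f}|^2$ by Plancherel.

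The heart of the argument is the pointwise identity $\frac{d}{dx}\bigl(x|f(x)|^2\bigr)=|f(x)|^2+2x\,\mathrm{Re}\bigl(\overline{f(x)}f'(x)\bigr)$. Integrating over $[-R,R]$ and letting $R\to\infty$, the boundary term drops out and yields $\norm{f}_2^2=-2\,\mathrm{Re}\int_{\RR}x\,\overline{f(x)}f'(x)\,dx$. Bounding the real part by the modulus and then applying Cauchy--Schwarz gives
\[
\norm{f}_2^2\le 2\int_{\RR}|x|\,|f(x)|\,|f'(x)|\,dx\le 2\left(\int_{\RR}x^2|f(x)|^2\,dx\right)^{1/2}\left(\int_{\RR}|f'(x)|^2\,dx\right)^{1/2}.
\]
Substituting the Plancherel identity $\left(\int|f'|^2\right)^{1/2}=2\pi\left(\int\gamma^2|\widehat{f}|^2\right)^{1/2}$ produces the factor $4\pi$ and establishes the inequality.

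For the equality case, equality must hold both in the passage from the real part to the modulus and in Cauchy--Schwarz. The former forces $x\,\overline{f(x)}f'(x)$ to be real of constant sign, and the latter forces $f'(x)$ to be a scalar multiple of $xf(x)$; together these give the first-order linear ODE $f'(x)=-2s\,x\,f(x)$ for a real constant $s$. Its solutions are $f(x)=Ce^{-sx^2}$, and membership in $L^2(\RR)$ forces $s>0$. Undoing the reduction of the first paragraph restores the modulation $e^{2\pi i x\gamma_0}$ and the shift $x-x_0$, yielding the asserted extremizers $f(x)=Ce^{2\pi i x\gamma_0}e^{-s(x-x_0)^2}$.

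The delicate point, and the step I expect to be the main obstacle, is rigor in the integration by parts: a priori $f$ need not be differentiable, and the vanishing of the boundary term $\bigl[x|f(x)|^2\bigr]_{-R}^{R}$ must be justified. The clean route is to prove the inequality first for Schwartz functions, where all the manipulations are legitimate and the boundary term vanishes because $x|f(x)|^2\to 0$, and then to extend to the stated $f$ by density, using that finiteness of the two right-hand integrals controls $f$ in the relevant Sobolev-type norm. The equality analysis likewise needs care, since one must verify that any extremizer genuinely lies in the class for which the ODE derivation is valid.
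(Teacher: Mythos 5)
Your proof is correct, but it is not the route the paper's formal proof takes. The paper derives Theorem \ref{thm:heisenberg} from the abstract self-adjoint operator inequality \eqref{eq:hspace2}: it checks that the position and momentum operators $Qf(x)=xf(x)$ and $Pf(x)=\frac{1}{2\pi i}f'(x)$ are self-adjoint, notes that $[Q-x_0,P-\gamma_0]=[Q,P]=-\frac{1}{2\pi i}I$, and reads the constant $4\pi$ off the commutator; the shifts $x_0,\gamma_0$ are absorbed into the operators themselves, so no change of variables is ever performed. Your argument is instead the classical direct computation --- integration by parts, Cauchy--Schwarz, Plancherel --- which the paper records only as the background sketch (\ref{classical proof}), stated there for Schwartz functions in the centered case; you supplement that sketch with the translation-modulation reduction handling general $(x_0,\gamma_0)$, the observation that finiteness of the right-hand side already gives $xf\in L^2(\RR)$ and $f'\in L^2(\RR)$, and the boundary-term analysis. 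Each approach buys something: the paper's operator-theoretic route reuses Theorem \ref{thm:hilbertup}, which it needs anyway for the vector-valued DFT uncertainty principle of Subsection \ref{sec:vvDFTuncertainty}, and it applies verbatim to any pair of self-adjoint operators; your route is elementary and self-contained, and it is the only one of the two that actually establishes the equality characterization --- the paper's proof of Theorem \ref{thm:heisenberg} never addresses equality at all, and extracting it from the abstract theorem would require combining the linear-dependence criterion of Theorem \ref{thm:hilbertup} with the vanishing of the anti-commutator term and then solving the resulting ODE, essentially the computation you carry out. The technical point you flag about rigor of the integration by parts is the right one, and it is resolvable as you indicate: either by Schwartz density, or directly, since $(x|f(x)|^2)' = |f(x)|^2 + 2x\,\mathrm{Re}\bigl(\overline{f(x)}f'(x)\bigr) \in L^1(\RR)$ forces $x|f(x)|^2$ to have limits at $\pm\infty$, and these limits must vanish because $|f|^2$ is integrable. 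Note that the paper's own proof sidesteps the same issue by restricting to $f$ in the common domain of $Q$, $P$, $QP$, and $PQ$, ``e.g., $f$ a Schwartz function,'' so neither proof as written covers all of $L^2(\RR)$ without the kind of extension argument you describe.
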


The proof of the basic inequality, (\ref{eq:hup}), in Theorem \ref{thm:heisenberg} is a consequence 
of the following calculation for 
$(x_0, \gamma_0)= (0,0)$ and for $f \in \Scal(\mathbb R)$, the Schwartz class of infinitely 
differentiable rapidly decreasing functions defined on $\mathbb R$. 

\begin{equation}
\label{classical proof}
   ||f||_2^4 =  \left( \int_{\mathbb R} x|f(x)^2|' dx \right)^2 
        \leq \left(\int_{\mathbb R}|x| |f(x)^2|' dx \right)^2
\end{equation}
\[
      \leq  \, 4 \left(\int_{\mathbb R} |x \overline{f(x)}f'(x)| dx\right)^2\\
\]
\[
    \leq  \, 4 ||x f(x)||_2^2 ||f'(x)||_2^2 = 16 \pi^2 ||x f(x)||_2^2 ||\gamma \widehat{f}(\gamma)||_2^2.
\]
Integration by parts gives the first equality and the Plancherel theorem gives the second equality;
the third inequality of (\ref{classical proof}) is a consequence of H\"older's inequality,
cf. the proof of (\ref{eq:hup}) in Subsection \ref{sec:uncertainty}.
For more complete proofs, see, for example,  \cite{weyl1950}, \cite{bene1989}, 
\cite{FolSit1997}, \cite{groc2001}.
Integration by parts and Plancherel's theorem can be generalized significantly by means
of Hardy inequalities and weighted Fourier transform norm inequalities, respectively, to yield
extensive weighted generalizations of Theorem \ref{thm:heisenberg}, 
see \cite{BenDel2016} for a technical outline of this program by one of the authors 
in his long collaboration with Hans Heinig and Raymond Johnson.


\subsection{The classical uncertainty principle and self-adjoint operators }
\label{sec:uncertainty}

Let $A$ and $B$ be linear operators on a Hilbert space $H.$ The {\it commutator } $[A,B]$
of $A$ and $B$ is defined as
\[
     [A,B] = AB - BA.
\]
Let $D(A)$ denote the domain of $A.$
The {\it expectation} or {\it expected value} of a self-adjoint operator $A$ in a {\it state} $x \in H$ 
is defined by the expression
\[
       E_x(A) = \langle A \rangle = \langle Ax, x \rangle;
\]
and, since $A$ is self-adjoint, we have 
\[
          \langle A^2 \rangle = \langle Ax, Ax \rangle = \norm{Ax}^2.
\]
The {\it variance} of a self-adjoint operator $A$ at $x \in D(A^2)$ is defined by the
expression
\[
       \Delta ^2_x (A) = E_x(A^2) - \{E_x(A)\}^2.
\]
$\langle A \rangle$ and $\langle A^2 \rangle$ depend on a state $x \in H,$
but traditionally $x$ is often not explicitly mentioned.

We begin with the following Hilbert space uncertainty principle inequality.

\begin{thm}[\cite{bene1994}, Theorem 7.2]
\label{thm:hilbertup1} 
Let $A$, $B$ be self-adjoint operators on a complex Hilbert space $H$ ($A$ and $B$ need not be continuous). 
If 
$$
          x \in D(A^2) \cap D(B^2) \cap D(i[A,B])
$$ 
and $\norm{x}\leq 1$, then
\begin{equation}
\label{eq:hspace1}
\{E_x(i[A,B])\}^2 \leq 4 \Delta_x^2(A) \Delta_x^2(B).
\end{equation} 
\end{thm}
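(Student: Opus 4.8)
The plan is to reduce to the mean-zero case by recentering the operators and then to apply the Cauchy--Schwarz inequality, exactly in the spirit of the classical Robertson argument. First I would introduce the recentered operators $A_0 = A - E_x(A)I$ and $B_0 = B - E_x(B)I$. These are again self-adjoint, and, crucially, the scalar multiples of the identity cancel in the commutator, so that $[A_0,B_0] = [A,B]$ on the relevant domain; hence $E_x(i[A,B]) = E_x(i[A_0,B_0])$ and it suffices to estimate the recentered quantity.

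The key algebraic step is to expand $A_0^2 = A^2 - 2E_x(A)A + E_x(A)^2 I$ and take the expectation in the state $x$. Using self-adjointness of $A$, so that $\norm{A_0 x}^2 = \langle A_0^2 x, x\rangle$, I would obtain the identity
\[
\norm{A_0 x}^2 = \Delta_x^2(A) - E_x(A)^2\bigl(1 - \norm{x}^2\bigr),
\]
together with its analogue for $B_0$. This is precisely where the hypothesis $\norm{x}\leq 1$ enters: since $1 - \norm{x}^2 \geq 0$, the identity forces $\norm{A_0 x}^2 \leq \Delta_x^2(A)$ and $\norm{B_0 x}^2 \leq \Delta_x^2(B)$. (Note that for $\norm{x}=1$ the subtracted term vanishes and one recovers the usual equalities $\norm{A_0 x}^2 = \Delta_x^2(A)$.)

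Next I would compute the expected commutator directly. Transferring $A_0$ and $B_0$ across the inner product by self-adjointness gives
\[
E_x(i[A_0,B_0]) = i\bigl(\langle B_0 x, A_0 x\rangle - \langle A_0 x, B_0 x\rangle\bigr) = -2\,\mathrm{Im}\,\langle B_0 x, A_0 x\rangle,
\]
so that $|E_x(i[A,B])| \leq 2\,|\langle B_0 x, A_0 x\rangle|$. Squaring and applying Cauchy--Schwarz together with the two variance bounds of the previous paragraph yields
\[
\{E_x(i[A,B])\}^2 \leq 4\norm{A_0 x}^2\,\norm{B_0 x}^2 \leq 4\,\Delta_x^2(A)\,\Delta_x^2(B),
\]
which is the claim.

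The step I expect to demand the most care is not the algebra but the operator-theoretic bookkeeping, since $A$ and $B$ are permitted to be unbounded. Each manipulation --- forming $A_0^2 x$, expanding $[A_0,B_0]$ so that the first-order terms cancel, transferring $A$ and $B$ across $\langle\cdot,\cdot\rangle$, and reading $\langle i[A,B]x, x\rangle$ as $i(\langle ABx, x\rangle - \langle BAx, x\rangle)$ --- must be justified on $D(A^2)\cap D(B^2)\cap D(i[A,B])$. The hypothesis that $x$ lies in exactly this intersection is what guarantees that every vector appearing ($Ax$, $Bx$, $ABx$, $BAx$, $A_0^2 x$, $B_0^2 x$) is well defined and that the self-adjoint transfers $\langle Ay, z\rangle = \langle y, Az\rangle$ are legitimate for the vectors $y,z \in D(A)$ that occur. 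Once this domain accounting is in place, the inequality follows from the two elementary estimates above.
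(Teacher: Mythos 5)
Your proof is correct, and the domain bookkeeping you flag is handled by the hypothesis exactly as you say, but your route is genuinely different from the paper's. You give the classical Robertson recentering argument: subtract the means to form $A_0 = A - E_x(A)I$ and $B_0 = B - E_x(B)I$, note that $[A_0,B_0]=[A,B]$, and obtain the product bound in one stroke from Cauchy--Schwarz applied to $\inner{B_0x}{A_0x}$, with $\norm{x}\leq 1$ entering through the exact identity $\norm{A_0x}^2 = \Delta_x^2(A) - E_x(A)^2(1-\norm{x}^2)$. The paper never recenters. It works with the auxiliary vector $(B+iA)x$, uses the inequality $\norm{(B+iA)x}^2 - |\inner{(B+iA)x}{x}|^2 \geq 0$ --- which is where $\norm{x}\leq 1$ enters in that argument, via Cauchy--Schwarz against $x$ --- and expands it to get the sum inequality $\Delta_x^2(A)+\Delta_x^2(B) \geq 2\,{\rm Im}\inner{Ax}{Bx} = E_x(i[A,B])$; it then upgrades the sum to a product by replacing $A$, $B$ with $rA$, $sB$ for real $r,s$, choosing $r^2 = \Delta_x^2(B)$ and $s^2=\Delta_x^2(A)$, and squaring and dividing. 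Your approach buys transparency and robustness: the role of $\norm{x}\leq 1$ is isolated in an identity (with $\norm{A_0x}^2 = \Delta_x^2(A)$ exactly when $\norm{x}=1$ or $E_x(A)=0$), and Cauchy--Schwarz yields the product bound directly, including the degenerate case of a vanishing variance, where the paper's final division by $\Delta_x(A)\Delta_x(B)$ strictly speaking requires a separate remark. What the paper's approach buys is that it introduces no new operators --- every manipulation involves only $A$, $B$, and the single vector $(B+iA)x$ --- so the sum-then-scale mechanism needs nothing beyond the crude estimate $|\inner{y}{x}| \leq \norm{y}$.
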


\begin{proof}
By self-adjointness, we first compute
\begin{equation}\label{7.33}
\begin{aligned}
E_x (i[A,B])= & i\left( \langle Bx,Ax \rangle - \langle Ax, Bx \rangle \right) \\
=& 2 \textrm{Im} \langle Ax, Bx \rangle.
\end{aligned}
\end{equation}
Also note that $D(A^2) \subseteq D(A)$. 

Since $||x||\leq 1$ and $\langle Ax, x \rangle$, $\langle Bx,x \rangle \in \mathbb{R}$ by self-adjointness, we have
\begin{align} \label{7.34}
||(B+iA)x||^2 - |\langle (B+iA)x,x \rangle |^2 \geq 0
\end{align}
and
\begin{align} 
\label{7.35}
|\langle (B+iA)x,x \rangle|^2 = \langle Bx,x \rangle^2 +\langle Ax, x \rangle^2.
\end{align}
By the definition of $||\cdot||$, we compute
\begin{align}\label{7.36}
||(B+iA) x||^2 = ||B x||^2 + ||Ax||^2 - 2 \textrm{Im} \ \langle Ax, Bx \rangle.
\end{align}

Substituting (\ref{7.35}) and (\ref{7.36}) into (\ref{7.34}) yields the inequality,
\begin{equation}\label{7.37}
\begin{aligned}
||Ax||^2 - \langle & Ax, x \rangle ^2 +||Bx||^2 - \langle Bx, x \rangle ^2 \\
&\geq 2 \textrm{Im}\ \langle Ax, Bx \rangle.
\end{aligned}
\end{equation}

Letting $r,s \in \mathbb{R}$, so that $rA$ and $sB$ are also self-adjoint, (\ref{7.37}) becomes 
\begin{equation}\label{7.38}
\begin{aligned}
r^2\left(||Ax||^2 - \langle  Ax, x \rangle ^2 \right) & +s^2 \left( ||Bx||^2 - \langle Bx, x \rangle ^2\right) \\
&\geq 2 rs \textrm{Im}\ \langle Ax, Bx \rangle.
\end{aligned}
\end{equation}
Setting $r^2= ||Bx||^2 - \langle Bx, x \rangle ^2$ and $s^2= ||Ax||^2 - \langle Ax, x \rangle ^2$, 
substituting into (\ref{7.38}), squaring both sides and dividing, we obtain
\begin{align*}
\left( ||Ax||^2 - \langle Ax, x \rangle ^2 \right) \left(||Bx||^2 - \langle Bx, x \rangle ^2\right) \geq
 \left( \textrm{Im} \langle Ax, Bx \rangle \right)^2.
\end{align*}
From this inequality and (\ref{7.33}) the uncertainty principle inequality (\ref{eq:hspace1}) follows.
\end{proof}

In the same vein, and with the same dense domain of definition
constraints as in Theorem \ref{thm:hilbertup1}, we have --

\begin{thm}
\label{thm:hilbertup}
Let $A$ and $B$ be self-adjoint operators on a Hilbert space $H$. Define the 
self-adjoint operators $T = AB+BA$ (the anti-commutator) and $S = -i\,[A,B].$
Then, for a given state $x \in H,$ we have
\begin{equation}
\label{eq:hspace}
      \left( \inner{x}{Tx}^2 + \inner{x}{Sx}^2 \right)   \leq   4\langle A^2 \rangle \langle B^2 \rangle.
\end{equation}
Equality holds in \eqref{eq:hspace} if and only if there exists $z_0 \in \CC$ such that $Ax = z_0Bx$.

\begin{proof}
Applying the Cauchy-Schwarz inequality and self-adjointness of $A$ we obtain
\begin{equation}
\label{eq:hilbertup1}
     \langle A^2 \rangle \langle B^2 \rangle = \norm{Ax}^2 \norm{Bx}^2 \geq \left|\inner{Ax}{Bx} \right|^2 = 
     \left|\inner{x}{ABx}\right|^2.
\end{equation}
By definition of $T$ and $S$, we have $AB = \frac{1}{2}T + \frac{i}{2}S$. Therefore, 
\begin{equation}
\label{eq:hilbertup2}
        |\langle x , ABx \rangle |^2 = \frac{1}{4}\left|\inner{x}{(T+iS)x}\right|^2
\end{equation}
\[
        = \frac{1}{4}\left|\inner{x}{Tx} - i \inner{x}{Sx}\right|^2\nonumber \\
       = \frac{1}{4}\left(\inner{x}{Tx}^2 + \inner{x}{Sx}^2\right).
\]
The final equality holds because $\inner{x}{Tx}$ and $\inner{x}{Sx}$ are real, and \eqref{eq:hspace} 
follows from \eqref{eq:hilbertup1} and \eqref{eq:hilbertup2}. 

Last, equality holds if and only if 
we have equality in the application of Cauchy-Schwarz, and this occurs when $Ax$ and $Bx$ are 
linearly dependent.
\end{proof}
\end{thm}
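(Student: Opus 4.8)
The plan is to derive \eqref{eq:hspace} directly from the Cauchy--Schwarz inequality, organized around the Hermitian and anti-Hermitian parts of the product $AB$, much as in the classical calculation preceding this subsection. First I would use the self-adjointness of $A$ and $B$ to record the two identities $\langle A^2 \rangle = \inner{Ax}{Ax} = \norm{Ax}^2$ and $\langle B^2 \rangle = \norm{Bx}^2$, so that the right-hand side of \eqref{eq:hspace} is exactly $4\norm{Ax}^2\norm{Bx}^2$. Cauchy--Schwarz then gives $\norm{Ax}^2\norm{Bx}^2 \geq |\inner{Ax}{Bx}|^2$, and a second use of self-adjointness of $A$ rewrites $\inner{Ax}{Bx} = \inner{x}{ABx}$. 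Everything thus reduces to expressing $|\inner{x}{ABx}|^2$ in terms of $\inner{x}{Tx}$ and $\inner{x}{Sx}$.

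The key algebraic step is the decomposition $AB = \tfrac{1}{2}T + \tfrac{i}{2}S$, which is immediate from $T = AB+BA$ and $S = -i(AB-BA)$. I would first verify that $T$ and $S$ are themselves self-adjoint, namely $T^\ast = BA+AB = T$ and $S^\ast = i(BA-AB) = S$, so that both $\inner{x}{Tx}$ and $\inner{x}{Sx}$ are real. Substituting the decomposition yields $\inner{x}{ABx} = \tfrac{1}{2}\inner{x}{Tx} + \tfrac{i}{2}\inner{x}{Sx}$, and because the two summands are real scalars the modulus squared splits cleanly as $|\inner{x}{ABx}|^2 = \tfrac{1}{4}\bigl(\inner{x}{Tx}^2 + \inner{x}{Sx}^2\bigr)$. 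Combining this with the Cauchy--Schwarz bound and multiplying through by $4$ produces \eqref{eq:hspace}.

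For the equality statement I would trace back through the chain: the only genuine inequality invoked is Cauchy--Schwarz applied to the pair $Ax$, $Bx$. Equality there holds precisely when $Ax$ and $Bx$ are linearly dependent, that is, when $Ax = z_0 Bx$ for some $z_0 \in \CC$ (absorbing the degenerate case $Bx = 0$ into this condition), which is exactly the asserted characterization.

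The genuinely delicate point, rather than any algebra, is the domain bookkeeping for possibly unbounded $A$ and $B$: each manipulation, moving $A$ across the inner product, forming $ABx$, and evaluating $\inner{x}{Tx}$ and $\inner{x}{Sx}$, must be legitimate. This is guaranteed by the hypothesis, inherited from Theorem \ref{thm:hilbertup1}, that $x$ lies in $D(A^2)\cap D(B^2)\cap D(i[A,B])$. With that membership secured the argument is entirely formal, so I expect no real obstacle beyond confirming that every expression above is well defined on this common domain.
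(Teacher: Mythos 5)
Your proposal is correct and takes essentially the same route as the paper's own proof: Cauchy--Schwarz applied to the pair $Ax$, $Bx$, self-adjointness to write $\langle A^2\rangle\langle B^2\rangle = \norm{Ax}^2\norm{Bx}^2$ and $\inner{Ax}{Bx} = \inner{x}{ABx}$, the decomposition $AB = \tfrac{1}{2}T + \tfrac{i}{2}S$ with the realness of $\inner{x}{Tx}$ and $\inner{x}{Sx}$ to split the modulus squared, and the equality case traced back to linear dependence of $Ax$ and $Bx$. Your closing remark on domain bookkeeping for unbounded operators is a sound precaution, matching the domain constraints the paper imposes by reference to Theorem \ref{thm:hilbertup1}, and does not change the argument.
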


\begin{example}
\label{ex:hilbertineqs}
The uncertainty principle inequalities (\ref{eq:hspace1}) and (\ref{eq:hspace}) can be compared
quantitatively by substituting the definitions of expected value and variance into the inequalities themselves.
As such, (\ref{eq:hspace1}) becomes
\[
       \left({\rm Im} \langle BA \rangle\right)^2 \; \leq \; \left(\langle A^2 \rangle  - \langle A \rangle^2\right) \,
       \left(\langle B^2 \rangle  - \langle B \rangle^2\right),
\]
and (\ref{eq:hspace}) becomes
\[
      \left({\rm Re} \langle BA \rangle\right)^2 - \left({\rm Im} \langle BA \rangle\right)^2 \; \leq 
      \; \langle A^2 \rangle \, \langle B^2 \rangle.
\]
\end{example}

Theorem \ref{thm:hilbertup} implies the more frequently used inequality for self-adjoint operators $A$ and $B,$
viz.,
\begin{equation}
 \label{eq:hspace2}
                       \left|\inner{[A,B]x}{x}\right|    \leq   2\norm{Ax}\norm{Bx}.
\end{equation}
Indeed, dropping the anti-commutator term from the right side of \eqref{eq:hspace} leaves
\[
         \inner{x}{Sx}^2 = \left|\inner{[A,B]x}{x}\right|^2.
\]
We have equality in \eqref{eq:hspace2} when $Ax$ and $Bx$ are linearly dependent (as above) and 
$\inner{x}{Tx} = 0$, i.e., when $\inner{Ax}{Bx}$ is completely imaginary. This weaker form of 
\eqref{eq:hspace} is enough to prove Theorem \ref{thm:heisenberg}, and thus the 
full content of Theorem \ref{thm:hilbertup} is 
usually neglected; however, we shall make use of it in Subsection \ref{sec:vvDFTuncertainty}.

Define the {\it position} and {\it momentum} operators respectively by
\[Qf(x) = xf(x), \quad Pf(x) = \frac{1}{2\pi i}f^\prime(x).\]
$Q$ and $P$ are densely defined linear operators on $L^2(\RR).$
When employing Hilbert space operator inequalities, such as \eqref{eq:hspace} and \eqref{eq:hspace2}, 
they are valid only for $x \in H$ in the domains of all the operators in question, i.e., $A$, $B$, $AB$, 
and $BA$.  We are now ready to prove Theorem \ref{thm:heisenberg} using the self-adjoint 
operator approach of this subsection, see \cite{BenDel2016} for other examples.

\begin{proof}[Proof of Theorem \ref{thm:heisenberg}]
Let $Q$ and $P$ be as defined above. Then, for $f, g \in D(Q)$, we have
\[
        \inner{Qf}{g} = \int xf(x)\overline{g(x)}\,dx = \int f(x)\overline{xg(x)}\,dx = \inner{f}{Qg},
\]
and for $f,g \in D(P)$,
\[
      \inner{Pf}{g} = \frac{1}{2\pi i} \int f^\prime(x)\overline{g(x)}\,dx = -\frac{1}{2\pi i} \int f(x)\overline{g^\prime(x)}\,dx 
      = \inner{f}{Pg}.
\]
Therefore $Q$ and $P$ are self-adjoint. The operators 
$Q-x_0$ and $P-\gamma_0$ are also self-adjoint and 
$[Q-x_0,P-\gamma_0] = [Q,P]$. Thus, \eqref{eq:hspace2} implies that for every $f$ in the domain of 
$Q$, $P$, $QP$, and $PQ$, e.g., $f$ a Schwartz function, 
\begin{equation}
\label{eq:heisproof1}
            \frac{1}{2}\left|\inner{[Q,P]f}{f}\right| \leq \norm{(Q-x_0)f}\norm{(P-\gamma_0)f}.
\end{equation}
For the commutator term we obtain
\begin{equation}
\label{eq:heisproof2}
           [Q,P]f(x) = \frac{1}{2\pi i}(xf^\prime(x)-(f^\prime(x)+xf^\prime(x))) = -\frac{1}{2\pi i} f(x).
\end{equation}
Combining \eqref{eq:heisproof1} and \eqref{eq:heisproof2} yields
\begin{equation*}
            \frac{1}{4\pi}\norm{f}_2^2 \leq \norm{(Q-x_0)f}\norm{(P-\gamma_0)f}.
\end{equation*}
It is an elementary fact from Fourier analysis that $(\frac{d}{dx}f\widehat{)}(\gamma) = 
2\pi i \gamma \widehat{f}(\gamma)$; applying this and Plancherel's theorem to the second term yields
\[
       \norm{(P-\gamma_0)f} = \left(\int (\gamma-\gamma_0)^2|\widehat{f}(\gamma)|^2\,d\gamma \right)^{1/2},
\]
and Heisenberg's inequality (\ref{eq:hup}) follows.
\end{proof}


\subsection{An uncertainty principle for the vector-valued DFT}
\label{sec:vvDFTuncertainty}

The uncertainty principle we prove for the vector-valued {\it DFT} is an 
extension of an uncertainty principle inequality proved by Gr\"{u}nbaum for the {\it DFT}
in \cite{grun2003}. We begin by defining two operators meant to represent the 
position and momentum operators defined on $\RR$ in Subsection \ref{sec:uncertainty}.

Define 
\begin{equation*}
         P: \ell^2(\ZZ/N\ZZ \times \ZZ/d\ZZ) \rightarrow \ell^2(\ZZ/N\ZZ \times \ZZ/d\ZZ)
\end{equation*}
by the formula,
\begin{equation}
\label{eq:P}
        \forall m \in \ZZ/N\ZZ, \quad P(u)(m) = i(u(m+1) - u(m - 1));
\end{equation}
and, given a fixed real valued $q \in \ell^2(\ZZ/N\ZZ \times \ZZ/d\ZZ)$, define 
\[
          Q: \ell^2(\ZZ/N\ZZ \times \ZZ/d\ZZ) \rightarrow \ell^2(\ZZ/N\ZZ \times \ZZ/d\ZZ)
\]          
by the formula
\begin{equation}
\label{eq:Q}
\forall m \in \ZZ/N\ZZ, \quad Q(u)(m) = q(m) u(m).
\end{equation}

\begin{prop}
The operators $P$ and $Q$ defined by (\ref{eq:P}) and  (\ref{eq:Q}) are linear and self-adjoint.
\begin{proof}
The linearity of $P$ and $Q$ and self-adjointness of $Q$ are clear. To show that $P$ is self-adjoint, 
let $u, v \in \ell^2(\ZZ/N\ZZ \times \ZZ/d\ZZ)$. We compute
\[
       \inner{Pu}{v} = \sum_{m=0}^{N-1} \inner{P(u)(m)}{v(m)}
= \sum_{m=0}^{N-1} \inner{i(u(m+1) - u(m-1))}{v(m)}
\]
\[
= \sum_{m=0}^{N-1} i \inner{u(m+1)}{v(m)} - i \inner{u(m-1)}{v(m)}\\
= \sum_{m=0}^{N-1} i \inner{u(m)}{v(m-1)} - i \inner{u(m)}{v(m+1)} 
\]
\[
= \sum_{m=0}^{N-1} \inner{u(m)}{i(v(m+1) - v(m-1))}\\
= \inner{u}{Pv}.
\qedhere\]
\end{proof}
\end{prop}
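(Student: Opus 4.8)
The plan is to dispatch linearity directly from the defining formulas and then verify self-adjointness of $P$ and $Q$ separately, the case of $P$ being the only one that requires a genuine computation. Linearity is immediate: $P(u)(m) = i(u(m+1) - u(m-1))$ is assembled from translations and scalar multiplication, and $Q(u)(m) = q(m)u(m)$ is pointwise multiplication by a fixed function, so that $P(au+bv) = aP(u)+bP(v)$ and $Q(au+bv) = aQ(u)+bQ(v)$ hold coordinatewise for all scalars $a,b$.

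For $Q$, self-adjointness hinges entirely on the hypothesis that $q$ is real-valued. Using the convention $\inner{a}{b} = \sum a\overline{b}$ employed throughout the paper, I would compute
\[
\inner{Qu}{v} = \sum_{m=0}^{N-1}\inner{q(m)u(m)}{v(m)} = \sum_{m=0}^{N-1} q(m)\inner{u(m)}{v(m)},
\]
since the scalar $q(m)$ factors out of the first slot; and likewise $\inner{u}{Qv} = \sum_{m} \overline{q(m)}\inner{u(m)}{v(m)}$. Because $\overline{q(m)} = q(m)$, the two expressions agree, giving $Q = Q^\ast$.

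For $P$, the key device is that the outer sum runs over the cyclic group $\ZZ/N\ZZ$, so shifting the summation index by $\pm 1$ merely permutes the summands and leaves the sum unchanged. After expanding $\inner{Pu}{v}$ into the two terms $i\inner{u(m+1)}{v(m)}$ and $-i\inner{u(m-1)}{v(m)}$, I would reindex $m \mapsto m-1$ in the first and $m \mapsto m+1$ in the second so as to move all the shifts onto $v$. Collecting terms then produces $\sum_{m}\inner{u(m)}{i(v(m+1)-v(m-1))} = \inner{u}{Pv}$, where the factor $i$ reappears correctly because it passes into the conjugate-linear second slot of the inner product as $\overline{i} = -i$.

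The main, and essentially only, subtlety I anticipate is precisely this interaction of the factor $i$ with the conjugation in the second slot: without the $i$, the translation difference $u(m+1) - u(m-1)$ would be skew-adjoint rather than self-adjoint, exactly as the continuous momentum operator $\tfrac{1}{2\pi i}\tfrac{d}{dx}$ requires its $i$ in order to be self-adjoint (cf.\ the computation for $P$ in Subsection \ref{sec:uncertainty}). Everything else is a routine finite reindexing with no boundary terms, since periodicity on $\ZZ/N\ZZ$ removes the integration-by-parts boundary contributions present in the continuous setting.
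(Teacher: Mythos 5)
Your proof is correct and takes essentially the same route as the paper: linearity and the self-adjointness of $Q$ are treated as immediate (the paper simply declares them clear), and self-adjointness of $P$ is established by the identical device of reindexing the sum over the cyclic group $\ZZ/N\ZZ$ to move the shifts onto $v$, with the factor $i$ turning into $-i$ in the conjugate-linear slot. The one cosmetic slip is calling $q(m)$ a scalar --- it is a vector in $\CC^d$ acting by coordinatewise multiplication, so it does not literally factor out of $\inner{q(m)u(m)}{v(m)}$ --- but since each coordinate of $q$ is real, your argument for $Q$ goes through coordinatewise unchanged.
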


Define the anti-commutator $T = QP+PQ$ and $S = - i[Q,P]$. Because the  Hilbert space 
$H = \ell^2(\ZZ/N\ZZ \times \ZZ/d\ZZ)$ is finite dimensional, 
$T$ and $S$ are linear self-adjoint operators defined on all of $H.$ Applying Theorem \ref{thm:hilbertup} 
gives an uncertainty principle inequality for the operators $Q$ and $P$:
\begin{equation}
\label{eq:vvDFTuncertainty1}
         \forall u \in \ell^2(\ZZ/N\ZZ \times \ZZ/d\ZZ), \quad \left( \inner{u}{Tu}^2 + \inner{u}{Su}^2 \right)
         \leq 4 \langle Q^2 \rangle \langle P^2 \rangle. 
\end{equation}
In this form, (\ref{eq:vvDFTuncertainty1}) does not appear to be related to the vector-valued {\it DFT}. 
We shall make the connection by finding appropriate expressions for each of the terms 
in (\ref{eq:vvDFTuncertainty1}), thereby
yielding a form of the Heisenberg inequality for the vector-valued {\it DFT}.

The expected values of $Q$ and $P$ are
\[
\langle Q^2 \rangle = \inner{Qu}{Qu}\\
= \sum_{m=0}^{N-1} \inner{Q(u)(m)}{Q(u)(m)}
\]
\[
= \sum_{m=0}^{N-1} \inner{q(m)u(m)}{q(m)u(m)}\\
= \sum_{m=0}^{N-1} \norm{q(m)u(m)}_{\ell^2(\ZZ/d\ZZ)}^2
= \norm{q u}^2 
\]
and 
\[
        \langle P^2 \rangle = \inner{Pu}{Pu}
         = \norm{Pu}^2
                     = \norm{i(\tau_{-1}u - \tau_{1}u)}^2
\]
\[
              = \norm{\mathcal{F}(\tau_{-1}u - \tau_{1}u)}^2 
           = \norm{e^{1} \widehat{u} - e^{-1} \widehat{u}}^2 
          = \norm{(e^1 - e^{-1}) \widehat{u}}^2.
\]
In the computation of $ \langle P^2 \rangle$ we use the 
unitarity of the vector-valued {\it DFT} mapping $\Fcal$ and the fact that $e^1$ and $e^{-1}$ are 
the modulation functions $e^j(m) = x_{jm},$ for a given {\it DFT}
frame $\{x_k\}_{k=0}^{N-1}$ for 
$\mathbb C^d,$ see Definition \ref{defn:transmod}.

We restate these expected values:
\begin{equation}
\label{eq:QPnorms}
       \langle Q^2 \rangle = \norm{q u}^2 \text{ and} \quad
        \langle P^2 \rangle = \norm{(e^1 - e^{-1}) \widehat{u}}^2.
\end{equation}

We now seek expressions for the terms $\inner{u}{Tu}^2$ and $\inner{u}{Su}^2$. Computing the 
commutator and anti-commutator of $Q$ and $P$ gives 
\[
          i\,Su(m) = [Q,P]u(m) = i (q(m) - q(m+1))  u(m+1) - i(q(m) - q(m-1))  u(m-1)
\]
and
\[
           Tu(m) =(QP + PQ)u(m) = i(q(m) + q(m+1))  u(m+1) - i(q(m) + q(m-1))  u(m-1).
\]
Therefore,
\begin{equation}
\label{eq:anticommutator}
\inner{u}{Tu} =  \sum_{m = 0}^{N-1} \inner{u(m)}{T(u)(m)}
\end{equation}
\[
     = \sum_{m=0}^{N-1} \inner{u(m)}{i(q(m) + q(m+1))  u(m+1) - i(q(m) + q(m-1))  u(m-1)}\nonumber
\]
\[
   = i\sum_{m=0}^{N-1} \inner{u(m)}{(q(m) + q(m-1))  u(m-1)} - \inner{u(m)}{(q(m) + q(m+1))  u(m+1)}\nonumber\\
\]
\[
= i\sum_{m=0}^{N-1} \inner{(q(m) + q(m-1)) u(m)}{u(m-1)} - \inner{u(m)}{(q(m) + q(m+1))  u(m+1)}\nonumber\\
\]
\[
 = i\sum_{m=0}^{N-1} \inner{(q(m+1) + q(m)) u(m+1)}{u(m)} - \inner{u(m)}{(q(m) + q(m+1))  u(m+1)}\nonumber\\
 \]
 \[
= 2 \sum_{m=0}^{N-1} \text{Im}\inner{u(m)}{(q(m) + q(m+1))  u(m+1)},
\]
and
\begin{equation}
\label{eq:commutator}
        \inner{u}{Su} = \sum_{m=0}^{N-1} \inner{u(m)}{S(u)(m)}
\end{equation}
\[
= \sum_{m=0}^{N-1} \inner{u(m)}{(q(m) - q(m+1))  u(m+1) - (q(m) - q(m-1))  u(m-1)}\nonumber\\
\]
\[
 = \sum_{m=0}^{N-1} \inner{u(m)}{(q(m) - q(m+1))  u(m+1)} - \inner{u(m)}{(q(m) - q(m-1))  u(m-1)}\nonumber\\
 \]
 \[
    = \sum_{m=0}^{N-1} \inner{u(m)}{(q(m) - q(m+1))  u(m+1)} - \inner{(q(m+1) - q(m))  u(m+1)}{u(m)}\nonumber\\
\]
\[
= 2 \sum_{m=0}^{N-1} \text{Re} \inner{u(m)}{(q(m) - q(m+1)) u(m+1)}.
\]

Combining \eqref{eq:QPnorms}, \eqref{eq:anticommutator}, and \eqref{eq:commutator} 
with inequality (\ref{eq:vvDFTuncertainty1}) gives the
following general uncertainty principle for the vector-valued {\it DFT}.

\begin{thm}[General uncertainty principle for the vector-valued {\it DFT}]
\label{thm:genvvup}

\begin{equation}
\label{eq:genvvup}        
       \left(\sum_{m=0}^{N-1} {\rm Im}\inner{u(m)}{(q(m) + q(m+1))  u(m+1)}\right)^2
\end{equation}
\[
       + \left(\sum_{m=0}^{N-1} {\rm Re} \langle u(m), (q(m) - q(m+1)) u(m+1)\rangle \right)^2
       \leq  \norm{q  u}^2 \norm{(e^1 - e^{-1})  \widehat{u}}^2.
\]
\end{thm}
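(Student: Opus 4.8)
The plan is to obtain \eqref{eq:genvvup} as a direct specialization of the abstract self-adjoint operator inequality in Theorem \ref{thm:hilbertup}, taking $A = Q$ and $B = P$. The preceding proposition already establishes that $Q$ and $P$ are self-adjoint on the finite-dimensional space $H = \ell^2(\ZZ/N\ZZ \times \ZZ/d\ZZ)$, so $T = QP + PQ$ and $S = -i[Q,P]$ are self-adjoint as well and defined on all of $H$; there are no domain subtleties. Theorem \ref{thm:hilbertup} then yields at once
\[
\inner{u}{Tu}^2 + \inner{u}{Su}^2 \leq 4\langle Q^2\rangle \langle P^2\rangle,
\]
which is \eqref{eq:vvDFTuncertainty1}. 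The entire remaining content of the theorem is then the translation of each of the four quantities appearing here into the explicit form displayed in \eqref{eq:genvvup}.

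First I would evaluate the two expected values. For $\langle Q^2\rangle$, self-adjointness of $Q$ gives $\langle Q^2\rangle = \norm{Qu}^2 = \norm{qu}^2$ immediately from the definition \eqref{eq:Q}. The more substantive step is $\langle P^2\rangle$, and this is where the vector-valued \emph{DFT} theory genuinely enters. I would rewrite $P = i(\tau_{-1} - \tau_{1})$ in terms of the translation operators, then pass to the Fourier side using that $\mathcal{F}$ is unitary (so norms are preserved) together with the modulation property $\mathcal{F}(\tau_j u) = e^{-j}\widehat{u}$ of Theorem \ref{thm:vvdfttm}. This converts $\norm{Pu}^2$ into $\norm{(e^1 - e^{-1})\widehat{u}}^2$, giving \eqref{eq:QPnorms}.

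Next I would compute the matrix elements $\inner{u}{Tu}$ and $\inner{u}{Su}$. Expanding $T$ and $S$ coordinatewise and collecting the shifts $u(m+1)$ and $u(m-1)$, a reindexing of the summation (shifting $m \mapsto m+1$ in the $u(m-1)$ terms, legitimate since the index runs over the cyclic group $\ZZ/N\ZZ$) pairs each term with its conjugate. This is the standard manoeuvre by which the anti-commutator $QP+PQ$ produces twice a real part and the commutator $-i[Q,P]$ produces twice an imaginary part, yielding \eqref{eq:anticommutator} and \eqref{eq:commutator}.

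Finally, substituting these four expressions into \eqref{eq:vvDFTuncertainty1} produces a factor $2^2 = 4$ on each squared term on the left, which cancels the factor $4$ on the right, leaving exactly \eqref{eq:genvvup}. I expect the main difficulty to be bookkeeping rather than conceptual: keeping the sign and conjugation conventions straight in the shift-of-index step that generates the $\text{Re}$/$\text{Im}$ splitting, and being careful that it is the \emph{full} strength of Theorem \ref{thm:hilbertup} — retaining both the anti-commutator and the commutator terms — that is required here, rather than the weaker inequality \eqref{eq:hspace2}, which discards the $\inner{u}{Tu}$ contribution.
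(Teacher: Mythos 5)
Your proposal follows the paper's proof exactly: invoke Theorem \ref{thm:hilbertup} with $A=Q$, $B=P$ (self-adjointness from the preceding proposition and finite dimensionality removing all domain issues) to get \eqref{eq:vvDFTuncertainty1}, compute $\langle Q^2\rangle = \norm{qu}^2$ directly and $\langle P^2\rangle = \norm{(e^1-e^{-1})\widehat{u}}^2$ via unitarity of $\Fcal$ and the translation/modulation properties of Theorem \ref{thm:vvdfttm}, evaluate $\inner{u}{Tu}$ and $\inner{u}{Su}$ by the cyclic reindexing $m \mapsto m+1$, and cancel the factor of $4$. One small correction to your prose: because of the factor $i$ built into the definition of $P$, it is the anti-commutator $T$ that produces the $\mathrm{Im}$ sums and $S=-i[Q,P]$ that produces the $\mathrm{Re}$ sums in \eqref{eq:anticommutator} and \eqref{eq:commutator} --- the opposite of what you wrote, although it is consistent with the abstract identities $\inner{u}{Tu}=2\,\mathrm{Re}\inner{Qu}{Pu}$ and $\inner{u}{Su}=2\,\mathrm{Im}\inner{Qu}{Pu}$, and the equations you cite as targets are the correct ones.
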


Theorem \ref{thm:genvvup} holds for any real valued $q$, but, to complete the analogy with
that of the classical uncertainty 
principle, we desire that the operators $Q$ and $P$ be unitarily equivalent through the Fourier transform, 
in this case, the vector-valued {\it DFT}. Indeed, setting $q =i(e^1 - e^{-1})$, we have 
$q(m)(n) = -2\sin(2\pi m s(n)/N)$ ($q$ is real-valued) and $\Fcal P = Q \Fcal$ as desired. With this choice 
of $Q$ we have proven the following version of the classical uncertainty principle for the vector-valued {\it DFT}.

\begin{thm}[Classical uncertainty principle for the vector-valued {\it DFT}]
\label{thm:classvvup}
Let $q = i(e^1 - e^{-1})$. 
For every $u$ in $\ell^2(\ZZ/N\ZZ \times \ZZ/d\ZZ)$ we have
\begin{equation}
\label{eq:classvvup}
  \left(\sum_{m=0}^{N-1} 
{\rm Im} \inner{u(m)}{(q(m) + q(m+1))  u(m+1)}\right)^2
\end{equation}
\[
     + \left(\sum_{m=0}^{N-1} {\rm Re} \inner{u(m)}{(q(m) - q(m+1)) u(m+1)}\right)^2
     \leq \norm{(e^1 - e^{-1})u}^2 \norm{(e^1 - e^{-1})\widehat{u}}^2.
\]
\end{thm}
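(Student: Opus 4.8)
The plan is to derive Theorem \ref{thm:classvvup} directly from the general inequality \eqref{eq:genvvup} of Theorem \ref{thm:genvvup} by specializing to the weight $q = i(e^1 - e^{-1})$. Since Theorem \ref{thm:genvvup} is valid for an \emph{arbitrary real-valued} $q$, the first thing I would check is that this particular $q$ really is real-valued, because that is exactly the hypothesis guaranteeing that the multiplication operator $Q$ in \eqref{eq:Q} is self-adjoint (and hence that the general theorem applies). Using $e^j(m)(n) = x_{jm}(n) = e^{2\pi i m s(n)/N}$ from Definition \ref{defn:transmod}, I would compute
\[
q(m)(n) = i\bigl(e^{2\pi i m s(n)/N} - e^{-2\pi i m s(n)/N}\bigr) = -2\sin(2\pi m s(n)/N),
\]
so $q$ is indeed real-valued and $Q$ is a legitimate self-adjoint operator. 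With this verified, Theorem \ref{thm:genvvup} applies verbatim to this $q$.

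Next I would observe that the two summation terms forming the left-hand side of \eqref{eq:genvvup} are \emph{literally} the two terms forming the left-hand side of \eqref{eq:classvvup}, so no work is needed there. The only discrepancy between the two inequalities lives on the right: \eqref{eq:genvvup} carries the factor $\norm{q u}^2$, whereas \eqref{eq:classvvup} carries $\norm{(e^1 - e^{-1})u}^2$, while the second factor $\norm{(e^1 - e^{-1})\widehat{u}}^2$ is already common to both. But $q = i(e^1 - e^{-1})$ and $|i| = 1$, so $q u = i\,(e^1 - e^{-1})u$ coordinatewise and multiplication by $i$ is a pointwise isometry; hence $|q(m)(n)| = |(e^1 - e^{-1})(m)(n)|$ for all $m,n$, and therefore $\norm{q u}^2 = \norm{(e^1 - e^{-1})u}^2$. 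Substituting this identity into the right-hand side of \eqref{eq:genvvup} turns it into the right-hand side of \eqref{eq:classvvup}, and the theorem follows.

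Although it is not strictly required for the inequality, I would also record the structural reason this deserves to be called the classical form, namely that $Q$ and $P$ are unitarily equivalent through the vector-valued DFT. Writing $P u = i(\tau_{-1}u - \tau_1 u)$ and invoking the translation law $\Fcal(\tau_j u) = e^{-j}\widehat{u}$ from Theorem \ref{thm:vvdfttm}, I would obtain $\Fcal(P u) = i(e^1 - e^{-1})\Fcal u = q\,\Fcal u = Q(\Fcal u)$, i.e.\ $\Fcal P = Q \Fcal$, mirroring the position–momentum relation on $\RR$ from Subsection \ref{sec:uncertainty}. The whole argument is routine substitution and contains no genuine obstacle; the one point that must be confirmed, and where the special algebraic form $q = i(e^1 - e^{-1})$ does its work, is the real-valuedness of $q$, since that is what both licenses the use of Theorem \ref{thm:genvvup} and produces the symmetric bound $\norm{(e^1 - e^{-1})u}^2\,\norm{(e^1 - e^{-1})\widehat{u}}^2$.
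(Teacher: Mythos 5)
Your proposal is correct and follows essentially the same route as the paper: specialize Theorem \ref{thm:genvvup} to $q = i(e^1 - e^{-1})$, verify $q(m)(n) = -2\sin(2\pi m s(n)/N)$ is real-valued so the general theorem applies, and use $|i|=1$ to replace $\norm{qu}^2$ by $\norm{(e^1 - e^{-1})u}^2$. Your explicit verification of $\Fcal P = Q\Fcal$ via the translation law of Theorem \ref{thm:vvdfttm} matches the paper's stated motivation for calling this the classical form, so nothing is missing or different in substance.
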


\begin{rem}
\label{rem:taoetaup}
It is natural to extend the technique of Theorem \ref{thm:genvvup} to vector-valued
versions of recent uncertainty principle inequalities for finite frames  
\cite{LamMae2011}, graphs \cite{BenKop2015}, and cyclic groups and beyond \cite{tao-2005},
\cite{MurWha2012}.
\end{rem}



\section{Appendix: Unitary representations of locally compact groups}
\label{sec:unirep}

\subsection{Unitary representations}
Besides the references \cite{pont1966}, \cite{GelRaiShi1964}, \cite{rudi1962},  \cite{HewRos1963}, 
\cite{HewRos1970}, \cite{reit1968}, \cite{foll1995} cited 
in Subsection \ref{sec:vvBanachAlgebra}, fundamental and deep background
for this Appendix can also be found in \cite{sugi1975}, \cite{mack1978}, \cite{sund1987}.

Let $H$ be a Hilbert space over $\CC,$ and let $\Lcal(H)$ be the space of bounded linear
operators on $H.$ $\Lcal(H)$ is a $\ast-$Banach algebra with unit. In fact, one takes composition
of operators as multiplication, the
identity map $I$ is the unit, the operator norm gives the topology, and the involution $\ast$
is defined by the adjoint operator.

$\Ucal(H) \subseteq \Lcal(H)$ denotes the subalgebra of unitary operators $T$ on $H,$
i.e., $T\,T^{\ast} = T^{\ast}\,T = I.$

\begin{defn}[Unitary representation]
\label{def:representation}
Let $G$ be a locally compact group. A {\it unitary representation} of $G$ is a Hilbert space 
$H$ over $\CC$ and a homomorphism $\pi : G \rightarrow \Ucal(H)$ from $G$ into the group 
$\Ucal(H)$ of unitary operators on $H,$ that is continuous with respect to the strong operator 
topology on $\Ucal(H).$ (The strong operator topology is explicitly defined below. It is weaker
than the norm topology, and coincides with the weak operator topology on $\Ucal(H).$)
We spell-out these properties here for convenience:
\benm
\item $\forall g,h \in G,$ $\pi(gh) = \pi(g)\pi(h);$
\item $\forall g \in G,$ $\pi(g^{-1}) = \pi(g)^{-1} = \pi(g)^\ast,$ where $\pi(g)^\ast$ is the adjoint of $\pi(g);$
\item $\forall x \in H,$ the mapping $G \rightarrow H,$ $g \mapsto \pi(g)(x),$
is continuous.
\ennm
The dimension of $H$ is called the {\it dimension} of $\pi$. When $G$ is a finite group, then 
$G$ is given the discrete topology and the continuity of $\pi$ is immediate. 
We denote a representation by $(H, \pi)$ or, when $H$ is understood by $\pi$. 
\end{defn}

\begin{defn}[Equivalence of representations]
Let $(H_1,\pi_1)$ and $(H_2, \pi_2)$ be representations of $G$. A bounded linear map 
$T: H_1 \rightarrow H_2$ is an {\it intertwining operator} for $\pi_1$ and $\pi_2$ if
\[\forall g \in G, \quad T\pi_1(g) = \pi_2(g)T.\]
$\pi_1$ and $\pi_2$ are said to be {\it unitarily equivalent} if there is a unitary intertwining operator $U$ 
for $\pi_1$ and $\pi_2$. 
\end{defn}

More generally, we could consider non-unitary representations, where $\pi$ is a homomorphism into the 
space of invertible operators on a Hilbert space. We do not do that here for two reasons. First, we are
mainly interested in the regular representations (see Example \ref{ex:trans}) and
these are unitary, and, second, every finite dimensional 
representation of a finite group is unitarizable. That is, if $(H, \pi)$ is a finite dimensional representation 
(not necessarily unitary) of $G$ and $|G|<\infty$, then there exists an inner product on $H$ such that 
$\pi$ is unitary. See Theorem 1.5 of \cite{kosm2009} for a proof of this fact. 

\begin{example}
\label{ex:trans}
 Let $G$ be a finite group, and let $\ell^2=\ell^2(G)$. The action of $G$ on $\ell^2$ by left translation 
 is a unitary representation of $G$. More concretely, let $\{x_h\}_{h \in G}$ be the standard orthonormal basis for $\ell^2$, and define $\lambda : G \rightarrow \Ucal(\ell^2)$ by the formula,
\[
      \forall g, h \in G, \quad \lambda(g)x_h = x_{gh}.
\]
$\lambda$ is called the {\it left regular representation} of $G$. The {\it right regular representation}, which 
we denote by $\rho$, is defined as translation on the right, i.e.,
\[\forall g, h \in G, \quad \rho(g)x_h = x_{hg^{-1}}.\] 
The construction is similar for general locally compact groups and takes place on $L^2(G)$.
\end{example}


\subsection{Irreducible Representations}
\begin{defn}[Invariant subspace]
An {\it invariant subspace} of a unitary representation $(H,\pi)$ is a closed subspace $S \subseteq H$ 
such that $\pi(g)S \subseteq S$ for all $g \in G$. The restriction of $\pi$ to $S$ is a unitary representation 
of $G$ called a {\it subrepresentation}. If $\pi$ has a nontrivial subrepresentation, i.e., nonzero and not 
equal to $\pi$, or equivalently, if it has a nontrivial invariant subspace, then $\pi$ is {\it reducible}. If $\pi$ 
has no nontrivial subrepresentations or, equivalently, has no nontrivial invariant subspaces, then $\pi$ is 
{\it irreducible}. 
\end{defn}

\begin{defn}[Direct sum of representations]
Let $(H_1,\pi_1)$ and $(H_2, \pi_2)$ be representations of $G$. Then,
\[(H_1 \oplus H_2, \pi_1\oplus\pi_2),\]
where $(\pi_1\oplus\pi_2)(g)(x_1,x_2) = (\pi_1(g)(x_1),\pi_2(g)(x_2)),$ for $g \in G, x_1 \in H_1, x_2 \in H_2,$ 
is a representation of $G$ called the {\it direct sum} of the representations 
$(H_1,\pi_1)$ and $(H_2, \pi_2)$.
\end{defn}

More generally, for a positive integer $m$, we recursively define the direct sum of $m$ representations 
$\pi_1\oplus\ldots\oplus\pi_m$. If $(H, \pi)$ is a representation of $G$, we denote by $m\pi$ the 
representation that is the product of $m$ copies of $\pi$, i.e., 
$$
(H\oplus\ldots\oplus H, \pi\oplus\ldots\oplus\pi),
$$
where each sum has $m$ terms. Clearly, a direct sum of nontrivial representations cannot be irreducible, 
e.g., $(H_1\oplus H_2,\pi_1\oplus\pi_2)$ will have invariant subspaces $H_1 \oplus \{0\}$ and $\{0\}\oplus H_2$. 
\begin{defn}[Complete reducibility]
A representation $(H, \pi)$ is called {\it completely reducible} if it is the direct sum of irreducible representations.
\end{defn}
Two classical problems of harmonic analysis on a locally compact group $G$ are to describe all the 
unitary representations of $G$ and to describe how unitary representations can be built as direct 
sums of smaller representations. For finite groups, Maschke's theorem,
Theorem \ref{thm:maschkes}, tells us that 
the irreducible representations are the building blocks of representation theory that enable these descriptions.

\begin{lem}\label{lem:invariant}
Let $(H,\pi)$ be a unitary representation of $G$. If $S \subseteq H$ is invariant under $\pi$, 
then $S^\perp = \{y \in H: \forall x \in S, \inner{x}{y} = 0\}$ is also invariant under $\pi$.
\begin{proof}
Let $y \in S^\perp$. Then, for any $x \in S$ and $g \in G$, we have $\inner{x}{\pi(g)y} = 
\inner{\pi(g^{-1})x}{y} = 0$; and,
therefore, $\pi(g)y \in S^\perp$.
\end{proof}
\end{lem}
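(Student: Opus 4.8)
The plan is to verify the defining condition for invariance directly: I would fix $y \in S^\perp$ and $g \in G$, and show that $\pi(g)y$ again annihilates every vector of $S$, i.e., that $\inner{x}{\pi(g)y} = 0$ for all $x \in S$. This reduces the entire lemma to a single inner-product computation exploiting the unitarity of $\pi$.

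The key step is to transfer the operator $\pi(g)$ from the second argument of the inner product to the first. Since $\pi$ is a unitary representation, property (2) of Definition \ref{def:representation} gives $\pi(g)^\ast = \pi(g^{-1})$, so that $\inner{x}{\pi(g)y} = \inner{\pi(g)^\ast x}{y} = \inner{\pi(g^{-1})x}{y}$. Now I would invoke the hypothesis that $S$ is invariant: because $\pi(h)S \subseteq S$ for every $h \in G$, and in particular for $h = g^{-1}$, the vector $\pi(g^{-1})x$ lies in $S$. Since $y \in S^\perp$, this forces $\inner{\pi(g^{-1})x}{y} = 0$, and hence $\inner{x}{\pi(g)y} = 0$. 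As $x \in S$ was arbitrary, $\pi(g)y \in S^\perp$, which is exactly what invariance of $S^\perp$ requires.

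There is essentially no serious obstacle here; the only points demanding care are the correct use of the adjoint convention, namely that moving $\pi(g)$ across the inner product produces $\pi(g)^\ast = \pi(g^{-1})$ rather than $\pi(g)$ itself, and the observation that one needs invariance of $S$ under the \emph{inverse} element $g^{-1}$, not merely under $g$. Both are guaranteed: the first by unitarity, and the second by the fact that invariance of $S$ is assumed for all group elements simultaneously. No completeness or limiting argument is needed, and the closedness of $S^\perp$, which is what makes it a legitimate space for a subrepresentation, is automatic.
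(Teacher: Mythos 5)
Your proposal is correct and follows exactly the same route as the paper's proof: move $\pi(g)$ across the inner product using unitarity, so $\inner{x}{\pi(g)y} = \inner{\pi(g^{-1})x}{y}$, and then use invariance of $S$ under $\pi(g^{-1})$ together with $y \in S^\perp$ to conclude the inner product vanishes. Nothing is missing; this is the paper's one-line argument, spelled out with the same key observations about the adjoint convention and the role of the inverse element.
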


\begin{thm}[Maschke's theorem]
\label{thm:maschkes}
Every finite dimensional unitary representation of a finite group $G$ is completely reducible.
\begin{proof}
Let $(H, \pi)$ be a representation of a finite group $G$ with dimension $n < \infty$. If $\pi$ is irreducible, then
we are done. Otherwise, let $S_1$ be a nontrivial invariant subspace of $\pi$. By Lemma \ref{lem:invariant}, 
$S_2 = S_1^\perp$ is also an invariant subspace of $\pi$. Letting $\pi_1$ and $\pi_2$ be the restrictions of 
$\pi$ to $S_1$ and $S_2$ respectively, we have $\pi = \pi_1 \oplus \pi_2$, $\dim S_1 < n$, and $\dim S_2 < n$. 
Proceeding inductively, we obtain a sequence of representations of strictly decreasing dimension, which must 
terminate and yield a decomposition of $\pi$ into a direct sum of irreducible representations.
\end{proof}
\end{thm}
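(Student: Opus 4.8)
The plan is to argue by induction on the dimension $n = \dim H$, using unitarity at exactly one point. The crucial input is Lemma \ref{lem:invariant}, already established above, which says that whenever $S \subseteq H$ is invariant under a unitary representation $\pi$, its orthogonal complement $S^\perp$ is invariant as well. This is the only place where unitarity is genuinely needed, and it is what keeps the finite-group argument clean: for a merely invertible (non-unitary) representation one would instead have to average a projection over $G$ to manufacture an invariant complementary subspace, and the legitimacy of that averaging rests on $|G| < \infty$. Since we are handed a \emph{unitary} representation, Lemma \ref{lem:invariant} supplies an invariant complement for free, and I can bypass any averaging trick.

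For the base case, every representation of dimension $0$ or $1$ is irreducible (vacuously, or because a one-dimensional space has no nontrivial closed subspaces), hence trivially a direct sum of irreducibles. For the inductive step, assume the assertion for all unitary representations of dimension strictly less than $n$, and let $(H,\pi)$ have dimension $n$. If $\pi$ is irreducible we are done. Otherwise $\pi$ has a nontrivial invariant subspace $S_1$, and by Lemma \ref{lem:invariant} the complement $S_2 = S_1^\perp$ is invariant as well. Restricting $\pi$ to each summand yields unitary subrepresentations $\pi_1$ and $\pi_2$ --- the restriction of each unitary $\pi(g)$ to an invariant subspace is unitary on that subspace --- and the orthogonal decomposition $H = S_1 \oplus S_2$ gives $\pi = \pi_1 \oplus \pi_2$.

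Because $S_1$ is nontrivial, both $\dim S_1$ and $\dim S_2$ are strictly smaller than $n$, so the inductive hypothesis applies to $\pi_1$ and $\pi_2$ separately, expressing each as a direct sum of irreducibles; concatenating the two decompositions exhibits $\pi$ itself as a direct sum of irreducibles. I do not anticipate a serious obstacle: once Lemma \ref{lem:invariant} is in place the argument is purely structural, and the single point requiring care is merely that the dimension drops strictly at each splitting, so that the recursion must terminate --- which is exactly what the induction on $n$ formalizes.
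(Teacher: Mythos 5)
Your proof is correct and follows essentially the same route as the paper's: it splits off a nontrivial invariant subspace, invokes Lemma \ref{lem:invariant} to get invariance of the orthogonal complement, and recurses on the strict dimension drop. The only differences are presentational --- you formalize the recursion as an explicit induction with a base case and note where unitarity enters --- which does not change the substance of the argument.
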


If $(H, \pi)$ is a unitary representation, we let $\Ccal_\pi \subseteq \Lcal(H)$ denote the algebra 
of operators on $H$ such that
\[
        \forall g \in G \; {\rm and} \; \forall T\in  \Ccal_\pi, \quad  T\,\pi(x) = \pi(x)\,T.
\]
$\Ccal_\pi$ is closed under taking weak limits and under taking adjoints, and, hence, it is a von Neumann
algebra. $\Ccal_\pi$ is the {\it commutant} of $\pi,$ and it is generated by
$\{\pi(g)\}_{g \in G}$. If $G$ is a finite group, then 
\[
       \Ccal_\pi = \left\{\sum_g a_g \pi(g): \{a_g\}_{g \in G} \subseteq \CC\right\}.
\]
Schur's lemma describes the commutants of irreducible unitary representations.

\begin{lem}[Schur's lemma, e.g., Lemma 3.5 of \cite{foll1995}] 
\label{lem:schurs}
Let $G$ be a locally compact group.\
\begin{enumerate}
\item Let $(H, \pi)$ be a unitary representation  of $G.$ $(H, \pi)$ is irreducible if and only if 
$\Ccal_\pi$ contains only scalar 
multiples of the identity.
\item Assume $T$ is an intertwining operator for irreducible unitary representations $(H_1, \pi_1)$ and 
$(H_2, \pi_2)$ of $G$. If $\pi_1$ and $\pi_2$ are inequivalent, then $T = 0$. 
\item If $G$ is Abelian, then every irreducible unitary representation of $G$ is one-dimensional.
\end{enumerate}
\end{lem}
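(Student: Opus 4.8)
The plan is to prove the three parts in order, leaning on Lemma \ref{lem:invariant} and on the fact that $\Ccal_\pi$ is closed under adjoints. For part (1) I would first dispatch the converse: if $\Ccal_\pi$ contains only scalar multiples of the identity but $\pi$ had a nontrivial invariant subspace $S$, then Lemma \ref{lem:invariant} makes $S^\perp$ invariant too, so the orthogonal projection $P$ onto $S$ would commute with every $\pi(g)$ and hence lie in $\Ccal_\pi$; being a scalar projection it must equal $0$ or $I$, contradicting the nontriviality of $S$. For the forward direction I would take $T \in \Ccal_\pi$ and split it as $T = A + iB$ with $A$, $B$ self-adjoint and still in $\Ccal_\pi$, reducing to a self-adjoint $A \in \Ccal_\pi$. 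The idea is that the spectral projections of $A$ commute with everything $A$ commutes with, hence with each $\pi(g)$, so each has $\pi$-invariant range; irreducibility forces them to be $0$ or $I$, leaving $A$ with one-point spectrum, i.e. $A = \lambda I$, whence $T = cI$. In the finite-dimensional setting this is simply the observation that an eigenspace $\ker(A - \lambda I)$ is $\pi$-invariant and therefore all of $H$.

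For part (2), given an intertwiner $T$ for inequivalent irreducibles $\pi_1, \pi_2$, I would first take adjoints in $T\pi_1(g) = \pi_2(g)T$ and use unitarity to see that $T^*$ intertwines $\pi_2$ and $\pi_1$. This makes $T^*T$ commute with every $\pi_1(g)$ and $TT^*$ commute with every $\pi_2(g)$, so part (1) yields $T^*T = cI$ and $TT^* = c'I$ with $c, c' \geq 0$. If $c = 0$, then $\norm{Tx}^2 = \inner{T^*Tx}{x} = 0$ for all $x$, forcing $T = 0$. If instead $c > 0$, I would set $U = c^{-1/2}T$, note $U^*U = I$ so that $U$ is an isometric intertwiner, and observe that $UU^* = (c'/c)I$ is a nonzero scalar projection, hence equals $I$; then $U$ is a unitary intertwining operator exhibiting $\pi_1 \cong \pi_2$, contradicting inequivalence. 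Thus $c = 0$ and $T = 0$.

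For part (3), with $G$ Abelian I would note that commutativity gives $\pi(g)\pi(h) = \pi(gh) = \pi(hg) = \pi(h)\pi(g)$ for all $g, h \in G$, so each $\pi(g)$ lies in $\Ccal_\pi$ and is therefore a scalar $\chi(g)I$ by part (1). But then every subspace of $H$ is $\pi$-invariant, and irreducibility (with $H \neq \{0\}$) leaves room only for $\dim H = 1$.

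The routine adjoint and commutation identities are immediate, and the one step I expect to demand genuine care is the appeal to the spectral theorem in part (1) that converts a self-adjoint element of $\Ccal_\pi$ into a scalar in the general, possibly infinite-dimensional, setting. For the finite-dimensional representations that drive this paper this collapses to the elementary eigenspace argument indicated above, so no real obstacle survives in the cases actually used here.
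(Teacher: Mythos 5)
Your proof is correct, but note that the paper itself offers no proof of this lemma: it is stated as background in the appendix with only the citation to Folland (Lemma 3.5 of \cite{foll1995}), so there is nothing internal to compare against. What you have written is essentially the standard argument that the cited source gives. Your part (1) correctly combines the paper's Lemma \ref{lem:invariant} with the projection trick in one direction and, in the other, uses the adjoint-closure of $\Ccal_\pi$ (which the paper records when it observes $\Ccal_\pi$ is a von Neumann algebra) to reduce to a self-adjoint element, whose spectral projections commute with every $\pi(g)$ and hence are $0$ or $I$; you are right that this spectral-theorem step is the only genuinely infinite-dimensional subtlety, and that for the finite-dimensional representations actually used in the paper it collapses to the eigenspace argument. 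Your part (2) also handles the infinite-dimensional pitfall correctly: from $T^{\ast}T = cI$ alone one only gets an isometric intertwiner, and you supply the second scalar $TT^{\ast} = c'I$ to upgrade it to a unitary one before invoking inequivalence. Part (3) is the standard consequence of part (1). In short: a complete and careful reconstruction of the proof the paper delegates to its reference.
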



\bibliographystyle{amsplain}
\bibliography{2017-03-14JBbib}

\end{document}